\documentclass[draft=false]{article}

\usepackage{graphicx} 
\usepackage{amsmath,amsfonts,amssymb,amscd,amsthm,mathrsfs}
\usepackage{ cmll }
\usepackage{tikz,tikz-cd}
\usetikzlibrary{graphs}
\usepackage{tangle}
\usepackage[inner=3cm,outer=3cm,top=2.5cm,bottom=2.5cm]{geometry}
\usepackage{pdflscape}

\usepackage[mathscr]{eucal}
\usepackage{bbold}
\usepackage[pdffitwindow=false,colorlinks=true,linkcolor=blue,urlcolor=blue,citecolor=violet]{hyperref}
\usepackage{todonotes}

\usepackage{tikzit}

\tikzstyle{grey dot}=[fill={rgb,255: red,225; green,225; blue,225}, draw=black, shape=circle, minimum width=1mm]
\tikzstyle{test}=[fill=white, draw=black, shape=circle]
\tikzstyle{rectangle}=[fill=white, draw=black, shape=rectangle, minimum width=3cm, minimum height=2cm]
\tikzstyle{small}=[fill=none, draw=none, shape=circle, font={\footnotesize}]
\tikzstyle{black rectangle}=[fill={rgb,255: red,64; green,64; blue,64}, draw=black, shape=rectangle]
\tikzstyle{grey rectangle}=[fill={rgb,255: red,225; green,225; blue,225}, draw=black, shape=rectangle]
\tikzstyle{small font circle}=[fill={rgb,255: red,225; green,225; blue,225}, draw=black, shape=circle, inner sep=1pt, font={\scriptsize}]
\tikzstyle{big rectangle}=[fill=white, draw=black, shape=rectangle, minimum width=4cm, minimum height=12mm]

{\theoremstyle{definition}\newtheorem{df}         {Definition}  [section]}
{\theoremstyle{definition}\newtheorem{setting}[df]{Setting}     }
{\theoremstyle{remark}    \newtheorem{rk}   [df]  {Remark}      }
{\theoremstyle{remark}    \newtheorem{ex}   [df]  {Example}     }
{\theoremstyle{remark}    \newtheorem{nota} [df]  {Notation}    }
{\theoremstyle{remark}      }
{\theoremstyle{remark}    \newtheorem{construction} [df]  {Construction}  }
{\theoremstyle{plain}     \newtheorem{thm}  [df]  {Theorem}     }
{\theoremstyle{plain}     \newtheorem{lm}   [df]  {Lemma}       }
{\theoremstyle{plain}            }
{\theoremstyle{plain}     \newtheorem{cor}  [df]  {Corollary}   }
{\theoremstyle{plain}     \newtheorem{prop} [df]  {Proposition} }
{\theoremstyle{plain}       }

\newtheorem{thmintro}{Theorem}

\ExplSyntaxOn
\NewDocumentCommand{\createbunch}{ m O{} m }
{ \clist_map_inline:nn { #3 } { \cs_new_protected:cpn { #2 ##1 } { #1 { ##1 } } } }
\ExplSyntaxOff

\createbunch{\mathbb}   [b]{A,B,C,D,E,F,G,H,I,J,K,L,M,N,O,P,Q,R,S,T,U,V,W,X,Y,Z}
\createbunch{\mathcal}  [c]{A,B,C,D,E,F,G,H,I,J,K,L,M,N,O,P,Q,R,S,T,U,V,W,X,Y,Z}
\createbunch{\mathsf}   [s]{A,B,C,D,E,F,G,H,I,J,K,L,M,N,O,P,Q,R,S,T,U,V,W,X,Y,Z}
\createbunch{\mathbf}   [f]{A,B,C,D,E,F,G,H,I,J,K,L,M,N,O,P,Q,R,S,T,U,V,W,X,Y,Z}
\createbunch{\mathtt}   [t]{A,B,C,D,E,F,G,H,I,J,K,L,M,N,O,P,Q,R,S,T,U,V,W,X,Y,Z}
\createbunch{\mathrm}   [r]{A,B,C,D,E,F,G,H,I,J,K,L,M,N,O,P,Q,R,S,T,U,V,W,X,Y,Z}
\createbunch{\mathfrak} [k]{A,B,C,D,E,F,G,H,I,J,K,L,M,N,O,P,Q,R,S,T,U,V,W,X,Y,Z}
\createbunch{\mathscr}  [z]{A,B,C,D,E,F,G,H,I,J,K,L,M,N,O,P,Q,R,S,T,U,V,W,X,Y,Z}
\createbunch{\mathsf}   {sSet,Top,Ch,Mod,Ho,Set,Ab,cdga,dga,Fin,Cat,Op,Gpd,Tup,LMod,Grp,Grpd,Span,Cospan,BiAlg}
\createbunch{\mathrm}[i]{sSet,Top,Ch,Mod,Ho,Set,Ab,cdga,dga,Fin,Cat,Op,Gpd,Tup,LMod,Grp,Grpd,Span,Cospan,BiAlg}
\createbunch{\mathtt}[t]{sSet,Top,Ch,Mod,Ho,Set,Ab,cdga,dga,Fin,Cat,Op,Gpd,Tup,LMod,Grp,Grpd,Span,Cospan,BiAlg}

\createbunch{\operatorname}{Hom,End,Tor,Ext,Mul,Ob,tr,ad,Ad,Id,id,im,Map,Fun,Sing,Spec,Sym,rad,Ind,Perf,Coh,QCoh,HH,ev,coev,Lie,Env}

\DeclareMathOperator{\colim}{colim}
\renewcommand{\o}{\otimes}

\newcommand{\too}   {\longrightarrow}
\newcommand{\baru}{\overline{\zU}}

\newcommand{\1}{\mathbb{1}}
\newcommand{\inn}{\mathrm{in}}
\newcommand{\out}{\mathrm{out}}

\newcommand{\Diop}{\mathbf{Diop}}
\newcommand{\Prpd}{\mathbf{Prpd}}
\newcommand{\Alg}{\mathbf{Alg}}
\newcommand{\Cato}{\mathbf{Cat}^\otimes}
\newcommand{\Frob}{\mathbf{Frob}}
\title{Dioperads, Frobenius monoidal functors and duality}

\author{
    Valerio Melani\thanks{ DIMAI, Universit\`a degli Studi di Firenze, Italy
    \\
    \href{mailto:valerio.melani@unifi.it}{valerio.melani@unifi.it}},
    Hugo Pourcelot\thanks{ Univ Angers, CNRS, LAREMA, SFR MATHSTIC, F-49000 Angers, France  \\
    \href{mailto:hugo.pourcelot@univ-angers.fr}{hugo.pourcelot@univ-angers.fr}}
}

\date{\today}

\allowdisplaybreaks[1]
\begin{document}

\maketitle
\begin{abstract}
Motivated by duality phenomena for derived global sections on derived local systems on compact oriented manifolds, we introduce the notion of a $d$-duality context between symmetric monoidal enriched categories. In this setting, the right adjoint of a symmetric monoidal functor carries compatible lax and colax structures twisted by an invertible object $d$.

For any enriched dioperad $\zP$, we define a $d$-twist $\zP\{d\}$ and prove that, in a $d$-duality context, the right adjoint sends $\zP$-algebras to $\zP\{-d\}$-algebras. To achieve this, the key conceptual result is that Frobenius monoidal functors between symmetric monoidal categories are precisely those functors inducing morphisms between the underlying dioperads. We also develop a dioperadic Day convolution, yielding an alternative proof of the main theorem and suggesting an $\infty$-categorical extension of the theory. 
\end{abstract}
\tableofcontents

\section{Introduction}

Operads are mathematical objects designed to model algebraic structures whose operations have a finite number of inputs and a single output. In addition to their theoretical importance, operads have also proven to be a useful tool in more geometric contexts. Indeed, besides their first historical appearance in algebraic topology in the recognition principle for iterated loop spaces via $\mathbb{E}_n$-algebras, they play a crucial role in string topology via operadic actions on free loop spaces, in the theory of deformation quantization or in the study of configuration spaces of manifolds. When working in the context of \emph{higher} or \emph{homotopical} geometry, it is often useful to employ operads to describe geometric structures in terms of algebraic operations (see for example \cite{AyalaFrancis_FactHomology}, \cite{Toen_Branes}, \cite{MannRobalo_BraneActions}, \cite{Melani_PoissonAffine}, \cite{CPTVV}).

Dioperads are structures similar to operads: they are mathematical tools designed to model (bi)algebraic structures that involve operations with multiple inputs as well as multiple outputs. 
They were originately defined by Szabo in \cite{Szabo_Polycategories} under the name \emph{polycategories}, and later studied in  \cite{Cockett-Seely} for their connection to linear logic. Although they are of importance in theoretical computer science, in this work we will adopt a more algebraic perspective, in which the typical examples of dioperads are those governing Lie bialgebras and Frobenius algebras. 
As it is the case with operads, given a dioperad $\zP$ and a symmetric monoidal category $\zC$, one can consider the category of $\zP$-algebras in $\zC$, which can be defined as the category of dioperadic morphisms bewteen $\zP$ and $\zU \zC$, the underlying dioperad of $\zC$.
Dioperads form a 2-category in a natural way. 

The present work is motivated by the idea that dioperads should provide an appropriate language for certain duality phenomena in geometry, where algebraic and coalgebraic structures interact in a controlled way (see for example \cite{johnson-freyd_aksz}, \cite{JohnsonFreyd_Coisotropic}). 
This general principle should not be too surprising: indeed, in \cite{Boyarchenko-Drinfeld2013} Boyarchenko and Drinfeld explained that Verdier duality in algebraic geometry naturally yields examples of \emph{$*$-autonomous categories} (which they call \emph{Grothendieck--Verdier categories}), a categorical structure closely related to dioperads\footnote{
More precisely, $*$-autonomous categories can be seen as \emph{representable} dioperads with duals, in the same way rigid symmetric monoidal categories are \emph{representable} operads with duals. We refer to Section \ref{section:recollections_*-autonomous} for details on $*$-autonomous categories.
}.
This paper is a step forward to study the possible uses of dioperads in geometry.

Our starting point was the following elementary observation. Suppose $M$ is a $d$-dimensional compact oriented topological manifold, and let $D_{\mathrm{loc}}(M,k)$ be the subcategory of the derived category of sheaves of $k$-modules on $M$ for a commutative ring $k$, consisting of complexes whose cohomology sheaves are local systems on $M$. Inside $D_{\mathrm{loc}}(M,k)$, let $D_{\mathrm{loc}}^p(M,k)$ be the full subcategory of objects which are locally quasi-isomorphic to complexes of projective modules of finite type. Then the derived global section functor $\Gamma$ on $D_{\mathrm{loc}}^p(M,k)$ admits a lax symmetric monoidal structure (being a right adjoint), while by Poincaré duality its cohomological shift $\Gamma[d]$ is identified with a \emph{left} adjoint and therefore carries a \emph{colax} symmetric monoidal structure. As a result, $\Gamma$ preserves algebraic structures, while it preserves coalgebraic structures up to a cohomological shift. It is natural to investigate what kind of bialgebraic structures are preserved by $\Gamma$.

An $\infty$-categorical way of modeling this type of situation is suggested by the discussion in \cite[Section 2.1]{PTVV}, where the authors define notions of compactness and orientations for general derived stacks.
Although the $\infty$-categorical question is definitely interesting (and we will briefly discuss it at the end of this introduction), this paper is devoted to answering the 1-categorical version, which turned out to be interesting in its own right.

To start with, suppose that we are given two rigid symmetric monoidal categories $\zC$ and $\zD$, both enriched over a closed bicomplete symmetric monoidal category $\zV$. Let $d \in \zV$ be an invertible object. We say that a \emph{$d$-duality context} is a pair of adjoint functors $F \colon \zD \rightleftharpoons\zC \colon G$, where the left adjoint $F$ is symmetric monoidal, together with a $d$-orientation on the functor $G$ (see  Definition \ref{df:duality_context} for details). In the motivating example above, $\zV$ is the category of complexes of $k$-modules, and by abuse of notation $d$ denotes the complex given by $k[d]$.

Moreover, given a $\zV$-enriched dioperad $\zP$ we define its $d$-twisted version $\zP\{d\}$, which is another dioperad. Intuitively, $\zP\{d\}$ is obtained from $\zP$ by twisting every co-operadic operation (i.e., every operation with several outputs) by tensoring with an appropriate power of $d$. We refer to Definition \ref{df:TwistedDioperad} for details.
Our first main result is then the following.

\begin{thmintro}[Cor. \ref{cor:main}]\label{thm:IntroA}
    Let $\zP$ be a dioperad. Given a $d$-duality context $F \colon \zD \rightleftharpoons\zC \colon G$, the right adjoint $G$ sends $\zP$-algebras in $\zC$ to $\zP\{-d\}$-algebras in $\zD$.
\end{thmintro}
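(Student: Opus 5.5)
The plan is to reduce the statement to a morphism of dioperads, using the key conceptual result announced in the abstract: Frobenius monoidal functors between symmetric monoidal categories are exactly the functors that induce morphisms between the underlying dioperads. A $\zP$-algebra in $\zC$ is a morphism of dioperads $A\colon \zP \to \zU\zC$. So it suffices to produce a morphism of dioperads
\[
\zU\zC\{-d\} \too \zU\zD
\]
whose underlying map on objects is $G$. Then, for every $\zP$-algebra $A$, the composite $\zP\{-d\} \to \zU\zC\{-d\} \to \zU\zD$ gives the desired $\zP\{-d\}$-algebra structure on $G(A)$. I will use that twisting is functorial, $\zP\{-d\} \to \zU\zC\{-d\}$: it multiplies each hom-object of arity $(n,m)$ by a fixed power of $d$ depending only on the arity, and composition is twisted compatibly. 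This should follow directly from Definition \ref{df:TwistedDioperad}.

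The structure carried by $G$ comes from the $d$-duality context.
\begin{itemize}
\item Since $F$ is a symmetric monoidal left adjoint, $G$ is lax monoidal, with maps $G(X_1)\o\cdots\o G(X_n)\to G(X_1\o\cdots\o X_n)$.
\item The $d$-orientation identifies a $d$-shift of $G$ with a left adjoint of $F$ (using rigidity of $\zC,\zD$). This gives a colax structure twisted by $d$, roughly $G(Y_1\o\cdots\o Y_m)\to d^{\o(m-1)}\o G(Y_1)\o\cdots\o G(Y_m)$, with the correct power fixed by the definition.
\end{itemize}
Next I define the morphism on hom-objects. To an operation $f\colon X_1\o\cdots\o X_n\to Y_1\o\cdots\o Y_m$, tensored with $d^{-(m-1)}$, I assign the composite of the lax map, then $G(f)$, then the twisted colax map, with the invertible $d$-factors cancelling. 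Enrichment is handled by writing this as a map of $\zV$-objects
\[
d^{-(m-1)}\o\zC(\o X_i,\o Y_j)\to \zD(\o G X_i,\o G Y_j),
\]
built from the enriched functor $G$ and pre- and post-composition.

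The remaining step is functoriality with respect to dioperadic composition, i.e.\ composing along a single object $Y_j=X'_k$. This amounts to the Frobenius compatibility axioms between the lax and the twisted colax structures. I would invoke the earlier result in its twisted form: $G$ equipped with the lax structure and the $d$-twisted colax structure is a ($d$-twisted) Frobenius monoidal functor, so it induces a morphism $\zU\zC\{-d\}\to\zU\zD$. Concretely, I would check that $\zU\zC\{-d\}$ is the underlying dioperad of $\zC$ with the shifted colax data. Alternatively, I would replace $G$ by an honest Frobenius monoidal functor into a suitable twisted category, and then apply the theorem verbatim.

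I expect the main obstacle to be verifying that the lax and twisted colax structures on $G$ satisfy the Frobenius axioms. This means showing that the two compatibility squares commute (lax followed by colax versus colax followed by lax), together with the compatibility of symmetries with the powers of $d$, including signs if $d$ is odd in complexes. My first attempt would use adjunction mates: both squares are mates of diagrams built only from the strong monoidal structure of $F$ and the duality data of the orientation, so they commute by naturality and the triangle identities, which reduces the claim to a coherence statement on $F$ in the rigid setting.
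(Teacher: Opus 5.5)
Your reduction is the paper's. The corollary is Theorem \ref{thm:main} combined with the twisted correspondence of Proposition \ref{prop:twisted_Frob_monoidal}, and a map $\zU\zC\{-d\}\to\zU\zD$ is the same as a map $\zU\zC\to\zU\zD\{d\}$. Your lax structure, your colax structure transported from $G'(X)=G(X^\vee)^*$, and your degree bookkeeping all agree with the paper.

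\textbf{The gap.} It lies in the only step with real content, the Frobenius relation for $\nabla$ and $\Delta$, and your mate sketch points the wrong way. The relation does not follow from coherence of $F$, naturality and triangle identities plus the existence of \emph{some} identification $G\cong G'$ of degree $-d$. It depends on which identification you use. Here is an example:
\begin{itemize}
\item Let $\zC=\zD$ be finite-dimensional $\bZ/2$-graded vector spaces, with $F=G=\id$, $d=\1$ and $\xi=\id$.
\item Replace the canonical $\alpha$ by $\alpha\circ\theta$, where $\theta$ is the non-monoidal natural automorphism of $\id$ acting by $1$ on even parts and by $t$ on odd parts, with $t^2\neq 1$.
\item The transported $\Delta$ is still coassociative and counital.
\item On three odd objects, however, the two sides of the Frobenius square differ by the factor $t^{-2}$.
\end{itemize}
Moreover, once $\Delta$ is unfolded, the square contains maps from values of the right adjoint $G$ to values of the left adjoint $G'$, such as $\alpha_{X\o Y\o Z}\circ\nabla_{X,Y\o Z}$. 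Neither adjunction transposes such maps, so adjunction mates cannot eliminate $\alpha$ and $\alpha^{-1}$; their actual definition has to be used.

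\textbf{The missing idea.} The isomorphism is $\alpha=\beta^\sharp$ for the specific pairing $\beta_X=\xi\circ G(\ev_X)\circ\nabla_{X,X^\vee}$. Because $\beta$ is built out of $\nabla$ itself, it is \emph{invariant}. Associativity of $\nabla$ and its naturality with respect to evaluations (rules \eqref{tag:ass} and \eqref{tag:star} of Lemma \ref{lem:graphical}) let you slide a $\nabla$ across the pairing. Equivalently, $\alpha$ intertwines the actions $\nabla_{X,-}$ on $G$ and on $G'$; this is exactly what the rescaled $\alpha\circ\theta$ above fails to do.

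Given this invariance, the Frobenius relation is the classical fact that the coproduct dual to a product under a nondegenerate invariant form satisfies the Frobenius identity. The paper carries this out graphically in these steps:
\begin{enumerate}
\item expand $\alpha$;
\item apply the zigzag identities, \eqref{tag:ass} and \eqref{tag:star};
\item recognize the definition of $\alpha$ again;
\item cancel $\alpha^{-1}\alpha$.
\end{enumerate}
To complete your proof, you must state and use this invariance and show how the $\alpha^{-1}$ factors are removed. As written, the content of Theorem \ref{thm:main} is asserted rather than proved.
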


Our strategy for proving this statement is as follows. We start by identifying the functors that preserve dioperadic structures. It turns out that the correct notion is that of Frobenius monoidal functors, which was introduced and studied in \cite{Szlachnyi2000FiniteQG}, \cite{Szlachnyi2003AdjointableMF}, \cite{Day2008}. We recall the precise definition in Section \ref{sect:FrobMonFunct}, but roughly speaking a Frobenius monoidal functor between monoidal categories is a functor that is both lax and colax in a suitably compatible way. In the same way lax symmetric monoidal functors are exactly those preserving algebras over operads, we prove that Frobenius monoidal functors are exactly those preserving algebras over dioperads. More precisely, our second result takes the following form.

\begin{thmintro}[Thm. \ref{thm:frob_as_diop_map}]\label{thm:IntroB}
    Let $\zC, \zD$ be symmetric monoidal $\zV$-enriched categories, and let $\zU\zC, \zU\zD$ be their underlying dioperads. Then there is an equivalence of categories
    \[ \Diop_\zV(\zU\zC, \zU\zD) \simeq \mathbf{Frob}_\zV^{\otimes}(\zC, \zD) \]
    where the right hand side is the category of Frobenius monoidal functors from $\zC$ to $\zD$.
\end{thmintro}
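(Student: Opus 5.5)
We construct mutually inverse functors between $\Diop_\zV(\zU\zC,\zU\zD)$ and $\Frob^\otimes_\zV(\zC,\zD)$, the dioperadic analogue of the classical correspondence between lax monoidal functors and operad maps of underlying multicategories. Recall that $\zU\zC(X_1,\dots,X_n;Y_1,\dots,Y_m)=\zC(X_1\otimes\cdots\otimes X_n,Y_1\otimes\cdots\otimes Y_m)$. Dioperadic composition glues one output of one operation to one input of another along a single object, with the remaining wires tensored in parallel.

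\textbf{From Frobenius functors to dioperad maps.} Let $F$ be Frobenius monoidal, with lax structure $\mu\colon FX\otimes FY\to F(X\otimes Y)$, unit $\eta\colon \1\to F\1$, and colax structure $\delta\colon F(X\otimes Y)\to FX\otimes FY$, counit $\epsilon\colon F\1\to\1$. Given $f\colon X_1\otimes\cdots\otimes X_n\to Y_1\otimes\cdots\otimes Y_m$, send it to the composite
\[
FX_1\otimes\cdots\otimes FX_n\xrightarrow{\mu^{(n)}}F(X_1\otimes\cdots\otimes X_n)\xrightarrow{Ff}F(Y_1\otimes\cdots\otimes Y_m)\xrightarrow{\delta^{(m)}}FY_1\otimes\cdots\otimes FY_m .
\]
Here $\mu^{(n)}$ and $\delta^{(m)}$ are the iterated structure maps, with $\eta$ and $\epsilon$ covering the cases $n=0$ and $m=0$. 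This is well defined by (co)associativity.

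The maps are $\zV$-natural and clearly respect identities and the symmetric group actions, using symmetry of $\mu$ and $\delta$. Compatibility with dioperadic composition $g\circ_{i,j}f$ is where the Frobenius axioms
\[
(\mathrm{id}\otimes\mu)(\delta\otimes\mathrm{id})=\delta\mu=(\mu\otimes\mathrm{id})(\mathrm{id}\otimes\delta)
\]
enter. We must show that $\mu^{(n')}\circ\big(\mathrm{id}\otimes\delta^{(m)}\mu^{(n)}\otimes\mathrm{id}\big)$ and the corresponding expression with the middle $\delta\mu$ pair collapsed agree. By induction on arities, the two-sided Frobenius relations generalize to the statement that "a $\mu$ followed by a $\delta$ meeting along one strand can be slid past each other". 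We would prove this generalized Frobenius relation first, as a lemma. Monoidal natural transformations between Frobenius functors, meaning ones that are both monoidal and comonoidal, are sent to dioperadic transformations, which gives functoriality.

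\textbf{The inverse functor.} Given a dioperad map $\Phi\colon\zU\zC\to\zU\zD$, define $F=\Phi$ on the $(1;1)$ parts. Set $\mu_{X,Y}=\Phi(\mathrm{id}_{X\otimes Y})$, regarded as a $(2;1)$-operation, and $\delta_{X,Y}=\Phi(\mathrm{id}_{X\otimes Y})$, regarded as a $(1;2)$-operation. Set $\eta=\Phi(\mathrm{id}_\1)$ in arity $(0;1)$ and $\epsilon=\Phi(\mathrm{id}_\1)$ in arity $(1;0)$.

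Associativity and unitality of $\mu$ follow from compositions of identities in $\zU\zC$. The Frobenius relations come from the fact that the $(2;2)$-operation $\mathrm{id}_{X\otimes Y\otimes Z}$ decomposes in several ways as a composite along a single strand. The key point is that every operation $f$ of arity $(n;m)$ in $\zU\zC$ factors as (identity viewed in $(1;m)$) $\circ\, f \circ$ (identity viewed in $(n;1)$). Hence $\Phi$ is determined by its $(1;1)$ part together with $\mu$ and $\delta$, so the two constructions are mutually inverse. Transformations correspond similarly.

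\textbf{Main obstacle.} We expect the main obstacle to be the composition compatibility in the first direction. There one must carefully handle dioperadic partial compositions $\circ_{i,j}$ in which the glued strand sits in an arbitrary position, and also the degenerate arities with empty inputs or outputs, where $\eta$ and $\epsilon$ intervene. We would reduce this to the generalized Frobenius lemma, proved by induction, and use string-diagram reasoning to organize the bookkeeping.
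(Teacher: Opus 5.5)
Your argument is correct, but it takes a different route from the paper.

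\textbf{The paper's route.} The paper never works with dioperad maps $\zU\zC\to\zU\zD$ directly. It uses the biadjunction of Corollary~\ref{cor:biadjunction_envelope} to replace $\Diop_\zV(\zU\zC,\zU\zD)$ by $\Cato_\zV(\zE\zU\zC,\zD)$. It then identifies symmetric monoidal functors out of the envelope with Frobenius monoidal functors. One direction restricts along $\zC\to\zE\zU\zC$ and reads off $\nabla,\Delta$ from the special morphisms $\nabla_{\underline x},\Delta_{\underline x}$. The other direction defines $\Lambda(G)$ on arbitrary, possibly disconnected, labelled simply connected digraphs by choosing a vertex ordering. It must then check independence of that ordering and invariance under substitution, which again reduces to the binary Frobenius relation.

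\textbf{Your route.} You check only single-edge graftings of two corollas, using the biased definition. For the converse you use the factorization $f=\id_{\otimes Y}\circ_{\otimes Y} f\circ_{\otimes X}\id_{\otimes X}$ in $\zU\zC$. This shows that a dioperad map is determined by its unary part and the images of the identity operations in arities $(2;1)$, $(1;2)$, $(0;1)$, $(1;0)$.

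\textbf{Comparison.} Your approach is more elementary and bypasses Propositions~\ref{prop:envelope_E^d} and~\ref{prop:envelope_E^p} entirely. It also gives an isomorphism of categories rather than just an equivalence. The paper's approach, in exchange, treats the dioperadic and the separable/properadic statements uniformly. It also phrases the result as a property of the envelope $\zE\zU\zC$, which fits the $\infty$-categorical direction the authors have in mind.

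\textbf{A simplification.} Your generalized Frobenius lemma by induction is unnecessary. Set $Y=\otimes_j Y_j$ and $V=\otimes_k V_k$, and write $\delta^{(Y_1,\dots,Y_m,Z)}=(\delta^{(m)}\otimes\id)\circ\delta_{Y,Z}$ and $\mu^{(Z,V_1,\dots,V_k)}=\mu_{Z,V}\circ(\id\otimes\mu^{(k)})$. By the exchange law, naturality and (co)associativity, the composition check then reduces to the single binary relation $(\id_{FY}\otimes\mu_{Z,V})(\delta_{Y,Z}\otimes\id_{FV})=\delta_{Y,Z\otimes V}\,\mu_{Y\otimes Z,V}$. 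Note that the configuration to handle is a $\delta$ followed by a $\mu$ sharing only the strand $FZ$; your displayed expression $\mu^{(n')}\circ(\id\otimes\delta^{(m)}\mu^{(n)}\otimes\id)$ does not quite have that shape.
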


We use Theorem \ref{thm:IntroB} to prove Theorem \ref{thm:IntroA}. In fact, in Section \ref{sec:orientation} we explicitly construct a $d$-twisted Frobenius monoidal structure on the functor $G$ coming from a $d$-duality context. The proof is mostly carried out via diagrammatic/graphical calculus.

Finally, in Section \ref{sec:day_convolution} we propose a different approach to Theorem \ref{thm:IntroA}. The idea is to use a dioperadic Day convolution. To state our result, recall the notion of a \emph{$*$-autonomous} category mentioned above, which consists of a closed symmetric monoidal category together with a \emph{dualizing object} (in the sense of Definition \ref{df:starAutCats}). Given such a $*$-autonomous category $\zC$, there is a standard way (described in Section \ref{sec:UndelyingDioperad}) to extract a dioperad from it, which we denote by $\baru\zC$. The following result was conjectured in \cite{Egger_StarAutonomousFctCategories}.

\begin{thmintro}[Thm. \ref{thm:day}]\label{thm:IntroC}
    Let $\zC$ be a $*$-autonomous category and $\zD$ be a dioperad. Then there exists a dioperad $\Fun(\zC,\zD)$ with the following universal property: for every dioperad $\zO$, there is an equivalence of categories
    \begin{equation*}
        \Diop(\zO,\Fun(\zC,\zD)) \simeq 
        \Diop(\zO\times \overline{\zU}\zC, \zD).
    \end{equation*}
\end{thmintro}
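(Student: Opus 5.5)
Following \cite{Egger_StarAutonomousFctCategories}, I would build $\Fun(\zC,\zD)$ as a Day-convolution dioperad. Its objects are the $\zV$-functors $\zC\to\zD$ in a suitable sense; concretely, its colours are those of $\zD$ indexed over $\zC$. More precisely, $\Fun(\zC,\zD)$ has as objects the functors $\phi\colon\zC\to\zD$, where $\zD$ is viewed through its underlying category of unary operations. For lists $\phi_1,\dots,\phi_n$ and $\psi_1,\dots,\psi_m$, I set the space of operations to be an end
\[
\Fun(\zC,\zD)(\phi_1,\dots,\phi_n;\psi_1,\dots,\psi_m)=\int_{c_i,c'_j\in\zC}\Bigl[\baru\zC(c_1,\dots,c_n;c'_1,\dots,c'_m),\ \zD(\phi_1c_1,\dots,\phi_nc_n;\psi_1c'_1,\dots,\psi_mc'_m)\Bigr].
\]
This is an internal hom in $\zV$ weighted by the dioperadic hom of $\baru\zC$, and it is the dioperadic analogue of the Day convolution internal hom. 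Composition along one colour uses the dioperadic composition in $\baru\zC$ together with that in $\zD$, performed inside the end. Units come from the identities of $\baru\zC$ and $\zD$, and the symmetric-group actions are inherited from both sides. The dioperad axioms (associativity, unit, equivariance) then follow from those of $\baru\zC$ and $\zD$ by naturality of ends. The key point here is that dioperadic composition glues along exactly one colour, so only a single variable $c$ is integrated out. That single variable is fine, because the composition map out of $\int_c$ is induced pointwise.

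Next I would establish the universal property. A dioperad map $\zO\times\baru\zC\to\zD$ consists of three pieces of data. First, an assignment $(o,c)\mapsto H(o,c)$ on objects. Second, for each pair of operations $p\in\zO(o_\bullet;o'_\bullet)$ and $f\in\baru\zC(c_\bullet;c'_\bullet)$ of the same arity, a component $H(p,f)$. Third, compatibility of these components with composition, units and symmetries. Here the product $\zO\times\baru\zC$ is taken arity-wise, with homs $\zO(\dots)\otimes\baru\zC(\dots)$. By the tensor–hom adjunction in $\zV$ and the universal property of ends, the arity-wise data $\zO(o_\bullet;o'_\bullet)\otimes\baru\zC(c_\bullet;c'_\bullet)\to\zD(H(o_\bullet,c_\bullet);H(o'_\bullet,c'_\bullet))$, natural in the $c$'s, correspond bijectively to maps $\zO(o_\bullet;o'_\bullet)\to\Fun(\zC,\zD)(H(o_\bullet,-);H(o'_\bullet,-))$. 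Naturality in the $c$'s comes from the unary part of the data, that is, from functoriality in $\zC$. One then checks that compatibility with composition on the two sides matches, and the same for natural transformations. Together this gives an equivalence (in fact an isomorphism) of categories $\Diop(\zO,\Fun(\zC,\zD))\simeq\Diop(\zO\times\baru\zC,\zD)$.

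The main obstacle is the role of $*$-autonomy, and I expect this to be the delicate step. With only the end formula above, the construction would work for an arbitrary dioperad in place of $\baru\zC$, as long as $\zV$ is complete. So either the statement simply holds in that generality, or the intended $\Fun(\zC,\zD)$ is meant to have a more tractable description. The natural candidate is a "representable" description in which the hom is reduced via $\baru\zC(c_\bullet;c'_\bullet)\cong\zC(c_1\otimes\dots\otimes c_n,\,c'_1\parr\dots\parr c'_m)$, so that the multi-variable end collapses by Yoneda to a single convolution formula. There the $*$-autonomous structure, with $\parr$ defined through the dualizing object, is exactly what ensures that $\baru\zC$ is well defined and its compositions are representable. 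So I would first try the plain end construction and verify the universal property directly. I would then use $*$-autonomy to identify $\baru\zC$'s homs and composition so as to check associativity of the composition in $\Fun(\zC,\zD)$, where the mixed compositions via the $\parr$–$\otimes$ linear distributivity $a\otimes(b\parr c)\to(a\otimes b)\parr c$ need careful bookkeeping.
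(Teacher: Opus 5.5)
Your hom-objects are the right ones. By co-Yoneda in $x$, $\int^{x}\zC(\otimes_i a_i,x)\times\zC(x,\parr_j b_j)\cong\zC(\otimes_i a_i,\parr_j b_j)$, so your end is the paper's equalizer \eqref{eqn:def_equalizer}. Your bijection for the universal property is likewise the paper's $\Psi$, namely $\rho(\alpha)_{f,g}=\varphi(\alpha,g\circ f)$.

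The gap is the composition, which you say is ``performed inside the end''. Take $\alpha\colon(F_i)\to(G_j)\cup(L)$ and $\beta\colon(L)\cup(H_k)\to(K_\ell)$. Composing their components along a common colour $x$ only yields an element of
\[
\int_{a,b,c,d}\Bigl[\int^{x}\baru\zC(a_\bullet;b_\bullet,x)\times\baru\zC(x,c_\bullet;d_\bullet),\ \zD(Fa_\bullet,Hc_\bullet;Gb_\bullet,Kd_\bullet)\Bigr],
\]
whereas the target hom-set is weighted by $\baru\zC(a_\bullet,c_\bullet;b_\bullet,d_\bullet)$. Composition in $\baru\zC$ maps the coend \emph{to} this weight, so after applying $[-,\zD(\dots)]$ it points the wrong way.

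To define $(\beta\circ_L\alpha)_h$ for an arbitrary $h\colon \otimes_i a_i\otimes\otimes_k c_k\to\parr_j b_j\parr\parr_\ell d_\ell$, you must write $h$ as a one-colour composite $v\circ_x u$ and prove the result is independent of that choice. In other words, you need the comparison map $\int^x\baru\zC(a_\bullet;b_\bullet,x)\times\baru\zC(x,c_\bullet;d_\bullet)\to\baru\zC(a_\bullet,c_\bullet;b_\bullet,d_\bullet)$ to be invertible. For a general dioperad it is not, so your remark that the end construction works with any dioperad in place of $\baru\zC$ is false. For example, let $\zP$ be the free dioperad on one operation of profile $(2;2)$, and let $\zD$ have one colour and only the identity operation. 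Then the hom-sets of $\Fun(\zP,\zD)$ of profiles $(1;2)$ and $(2;1)$ are singletons, while the one of profile $(2;2)$ is empty. So there is no composition map at all.

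That invertibility is exactly what $*$-autonomy supplies, and it is the core of the paper's proof, not a matter of tractability or of bookkeeping for associativity. Consider a component indexed by $f\colon a\otimes c\to x$ and $g\colon x\to b\parr d$. The paper uses the duality bijections $f^\sharp\colon a\to c^*\parr x$ and $g^\flat\colon x\otimes b^*\to d$, together with $\coev$, $\ev$ and $\delta$, to build a canonical factorization through $x'=(c^*\parr x)\otimes b^*$ and $x''=c^*\parr(x\otimes b^*)$. It then defines $\beta\circ_L\alpha$ by \eqref{eqn:def_composition}.

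The substantive verification is Lemma~\ref{lm:ev_is_map_diop}: this canonical composite agrees with $\beta_v\circ_{L(x)}\alpha_u$ for every one-colour factorization $h=v\circ_x u$ in $\baru\zC$. Its proof uses \eqref{eqn:rewriting_lim} and the snake-type diagrams relating $\ev$, $\coev$ and $\delta$. The same fact is what makes your $\rho$ preserve composition, so your step ``one checks that compatibility with composition on the two sides matches'' hides the same missing argument.

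To repair the proposal, keep your end formula but define composition through such a canonical factorization. Alternatively, write down an explicit inverse of the comparison map, obtained from $\zC(x\otimes c,d)\cong\zC(x,c^*\parr d)$ and co-Yoneda, and check that it is inverse to composition. Either way, you must then prove independence of the factorization.
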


We believe Theorem \ref{thm:IntroC} may be interesting in its own right, but in Section \ref{sec:day_application} we sketch how it can also give a different proof of Theorem \ref{thm:IntroA}. The idea here is to use a straightforward enriched generalization of Theorem \ref{thm:IntroC}. In particular, one prove that if both $\zC$ and $\zD$ are enriched over $\zV$, then there is an equivalence of categories
\begin{equation*}
        \Diop_\zV(\zO,\Fun(\zC,\zD)) \simeq 
        \Diop_\zV(\zO\otimes \overline{\zU}\zC, \zD).
    \end{equation*}
where the tensor product on the right-hand side is the Hadamard tensor product of dioperads. In particular, if $\zO$ is the dioperad $\mathrm{Frob}$ (which encodes Frobenius algebras, and is the unit for the Hadamard tensor product), we get an equivalence
\begin{equation*}
        \Alg_{\mathrm{Frob}}(\Fun(\zC,\zD)) \simeq 
        \Diop_\zV(\overline{\zU}\zC, \zD).
    \end{equation*}
If moreover the dioperad $\zD$ is of the form $\zU\zE\{d\}$ for some rigid symmetric monoidal $\zV$-enriched category $\zE$ and some invertible object $d \in \zV$, we obtain
\begin{equation*}
     \Alg_\mathrm{Frob}\left( \Fun(\zC,\zU\zE\{d\}) \right) \simeq \Diop_\zV(\zU\zC,\zU\zE\{d\}).
\end{equation*}
In particular, $d$-twisted Frobenius monoidal functors $\zC \to \zE$ correspond to Frobenius algebras in $\Fun(\zC,\zU\zE\{d\})$. Section \ref{sec:day_application} sketches how to produce such a Frobenius algebra starting from a $d$-duality context in the sense of Definition \ref{df:orientation}, thus giving a different approach to Theorem \ref{thm:IntroA}.

\

{\bf{Related works and vistas.}} Recently, there has been some growing interest in the construction of Frobenius monoidal functors, as in \cite{Yadav_coHopfAdjunctions} and \cite{FLAKE_AmbiadjunctionsDrinfeldCenters}. In both papers, the authors construct Frobenius monoidal structures, starting with a fairly general categorical situation. This is very much in the spirit of the results in Section \ref{sect:FrobMonFunct}, even though our framework is somewhat different. It would be interesting to study the relation between their constructions and our results. 

As we already mentioned in this Introduction, we think it would be interesting to have an $\infty$-categorical version of our results. In particular, if $X$ is a $\zO$-compact $d$-oriented derived stack (in the sense of \cite[Section 2.1]{PTVV}) over a base field $k$, we expect the (derived) pushforward
$$\Perf(X) \to \Perf(k)$$
to be a Frobenius monoidal $\infty$-functor. However, to our knowledge, there is no definition of Frobenius monoidal $\infty$-functors in the literature. An idea is to use intuition from Theorem \ref{thm:IntroB}. More specifically, one could hope to study $\infty$-dioperads first, and prove that any symmetric monoidal $\infty$-category $\zC$ has an underlying $\infty$-dioperad $\zU\zC$. Then a Frobenius monoidal $\infty$-functor between symmetric monoidal $\infty$-categories could be defined as being a map between the underlying $\infty$-dioperads.
This is much in the spirit of the definition of lax symmetric monoidal $\infty$-functors appearing in \cite[Section 2.1.3]{Lurie_HA}. We plan to exploit this point of view in a forthcoming work.

As a possible application of the $\infty$-categorical version of our results, we think our approach could lead to a different proof of the AKSZ theorem for shifted Poisson structures, which was recently proved in \cite{Tomic_AKSZ}. 
To this purpose, we hope to prove an $\infty$-categorical version of Theorem \ref{thm:IntroC}, which may also be of independent interest (see \cite{Glasman_DayConvolution}, \cite[Section 2.2.6]{Lurie_HA} for Day convolution results for $\infty$-categories and $\infty$-operads). We then plan to follow the strategy outlined in Section \ref{sec:day_application} to show that, in an $\infty$-categorical duality context (as in \cite[Section 2.1]{PTVV}), the pushforward functor inherits a natural $d$-twisted Frobenius monoidal structure.

Finally, we think that a dioperadic approach to the Poisson AKSZ construction would also work in the relative setting, similarly to the relative versions of the symplectic AKSZ construction (see \cite{Calaque_LagrangianAKSZ}). In the Poisson case, we expect that the AKSZ theorem will produce shifted coisotropic structures (in the sense of \cite{MelaniSafronov_DerCoisotropic_1}, \cite{MelaniSafronov_DerCoisotropic_2}). We refer to \cite{JohnsonFreyd_Coisotropic} for a partial result in this direction.

\paragraph{Acknowledgements.}
We thank Félix Loubaton and Nikola Tomi\'c for the interesting discussions related to the topic of this article. 

The second author is partially supported by the Agence Nationale de la Recherche, project \href{https://anr.fr/Project-ANR-24-CE40-2583}{ANR-24-CE40-2583}. 

\subsection*{Notations and conventions}

\paragraph*{On symmetric monoidal categories.}
\label{nota:tensor}
To keep the computations in symmetric monoidal categories readable, we will systematically omit the unitors, associators and braidings in the notations, by virtue of MacLane's coherence theorem.
Moreover, to shorten notations, we may sometimes remove symbols $\otimes$, leaving the tensor products implicit; for example, $G(XY)GZ$ will stand for $G(X\otimes Y)\otimes G(Z)$.

\paragraph*{On enriched categories.}
\label{par:enriched_cats}
Throughout this paper, we work with categories, dioperads and properads enriched over a symmetric monoidal category $(\zV,\otimes, \1)$, which we further assume to be bicomplete with tensor product commuting with colimits in each variables. In particular, morphisms, functors, natural transformations, symmetric monoidal categories, dualizable objects in them, etc. are to be interpreted in the sense of enriched category theory. 

Given a $\zV$-enriched category $\zC$, we will abuse notation by writing $f\colon A\to B$ for a morphism $\1\to \zC(A,B)$ and say that $f$ is a morphism from $A$ to $B$ in $\zC$. In the same spirit, we will typically treat morphisms as in ordinary category theory, writing equations and composites of individual morphisms. In every such situation, it is straightforward to reformulate the relevant equalities as commutative diagrams in $\zV$.

The (large) $2$-category of small symmetric monoidal $\zV$-categories, symmetric monoidal functors and monoidal natural transformations between them will be denoted  $\Cato_\zV$.

\section{Recollections on properads and dioperads}
In this section, we recall the basic definitions and properties of $\zV$-dioperads and $\zV$-properads, their associated $2$-categories $\Diop_\zV$ and $\Prpd_\zV$ and their relation with symmetric monoidal $\zV$-categories through envelope functors. 
The results of this section are gathered from standard references (mainly \cite{Yau_Johnson_PROPsBook,Johnson_Yau_2dimCatsBook}), if only slightly extended to the enriched setting, with the exception of Proposition \ref{prop:envelope_E^p}, whose statement we were not able to locate in the literature and of which we provide a proof (note however that an $\infty$-categorical version of this proposition is established in \cite{Barkan-Steinebrunner_properads}).

\subsection{Graphs}

We recall some definitions and conventions on graphs, following \cite{Yau_Johnson_PROPsBook}.

\begin{df}
    \label{df:digraph}
    A \emph{digraph} is a directed graph with half-edges which has no directed cycles. Given  a digraph $G$, we let $V(G)$, $E(G)$, $\inn(G)$, $\out(G)$ denote respectively the sets of vertices, edges, input leaves and output leaves. Given a vertex $v\in V(G)$, we let $\inn_v$ and $\out_v$ denote its sets of inputs and outputs, which are either edges or leaves.
\end{df}

Fix a set $S$. We now introduce variants of digraphs whose edges are colored with elements in $S$.
\begin{df}
    \label{df:profile} 
    An \emph{$S$-profile} $\underline{x} = (x_1,\dots, x_n)$ is an (ordered) finite sequence of elements in $S$, possibly empty. An \emph{$S$-biprofile} is a pair of $S$-profiles, typically written as $(\underline{x},\underline{y})$ or $(x_1,\dots,x_n;y_1,\dots, y_m)$. Whenever the context is clear, we will forget the set $S$ in the notation.
\end{df}

\begin{nota}
    \label{nota:unordered_sets}
    Slightly abusing notation, we will sometimes write an $S$-profile $\underline{x}$ as $(x_i)_{i\in I}$, where $I = (I^\mathrm{u},<_I)$ is a linearly ordered finite set; we say that $I^\mathrm{u} \in \Fin$ is the underlying set of $I$. If $A\subseteq I^\mathrm{u}$ is a subset, we can form the subprofile $\underline{x}|_{A} := (x_i)_{i\in A}$ of $\underline{x}$, where we implicitly endow $A$ with the restriction of the linear order $<_I$. 
\end{nota}

\begin{df}
    \label{df:colored_graph} 
    An \emph{$S$-colored graph} is a digraph $G$ together with a coloring of edges, that is a function $E(G) \to S$, and orderings of the sets $\inn(G)$, $\out(G)$, $\inn_v$ and $\out_v$ for every vertex $v$. An $S$-colored graph $G$ therefore has an associated biprofile $(\inn(G),\out(G))$, as well as biprofiles $(\inn_v,\out_v)$ for every vertex $v$.
\end{df}

\begin{ex}
    \label{ex:corolla}
    Given a biprofile $(\underline{x},\underline{y}) = (x_1,\dots,x_n;y_1,\dots,y_m)$, the corolla $\mathfrak{c}_{\underline{x},\underline{y}}$ is the colored graph with one vertex, $n$ input leaves colored by $\underline{x}$ and $m$ output leaves colored by $\underline{y}$, with the obvious orderings.
\end{ex}


\subsection{Definition of properads and dioperads}

Recall that we have fixed  a closed bicomplete symmetric monoidal category $(\zV,\otimes,\1)$, in which the tensor product commutes with colimits in each variable.
\begin{df}
    A \emph{properad $\zP$ in $\zV$} (also called \emph{$\zV$-properad}) consists of 
    \begin{itemize}
        \item a set $S=\mathrm{Col}(\zP)$, whose elements are called \emph{colors},
        \item an object $\zP(\underline{x},\underline{y}) \in \zV$ for every biprofile $(\underline{x},\underline{y})$, which encodes \emph{operations} with inputs $\underline{x}$ and outputs $\underline{y}$,
        \item a morphism $\id_x\colon \1 \to \zP(x,x)$ in $\zV$ for every color $x\in S$,
        \item composition morphisms
            \begin{equation*}
                \label{eqn:def_properad_composition}
                \gamma_G \colon \zP[G]:= \bigotimes_{v\in V(G)} \zP(\mathrm{in}_v, \mathrm{out}_v) \longrightarrow \zP(\mathrm{in}(G), \mathrm{out}(G))
            \end{equation*}
            for every $S$-colored connected graph $G$, 
    \end{itemize}
    that satisfy appropriate unitality and associativity  conditions (see \cite[Definition 10.39]{Yau_Johnson_PROPsBook} for a precise definition).
\end{df}

\begin{df}
    A \emph{dioperad in $\zV$} (also called \emph{$\zV$-dioperad}) is defined in the same way, except that one considers only composition morphisms \eqref{eqn:def_properad_composition} indexed by $S$-colored \emph{simply connected} connected digraphs.
\end{df}

\begin{rk}
    Our dioperads and properads are implicitly colored. Note also that dioperads also appear under the name of \emph{polycategories} in the literature.
\end{rk}

\begin{df}
    A morphism between properads (or dioperads) $\zP \to \zP'$ is given by a map of underlying sets of colors $\varphi\colon S\to S'$ and a family of morphisms $\zP(\underline{x},\underline{y})\to \zP'(\varphi(\underline{x}),\varphi(\underline{y}))$ compatible with the identity and composition morphisms.
\end{df}

\begin{rk}
    [Biased definitions of properads and dioperads]
    In the above definition of a properad, the composition along a general $S$-colored connected graph $G$ is determined by compositions along graftings of two corollas, glued along $k>0$ common colors. Similarly, in the case of dioperads, general compositions are determined by compositions along grafting of two corollas, glued along a single common color. 
    It will be convenient when defining a properad (respectively a dioperad), or a morphism between such, to only specify these particular compositions.
    We refer to the book \cite[Definition 3.5, Remark 3.9]{hackney-robertson-yau_book} for details on these alternative definitions of properads and dioperads.
\end{rk}

\subsubsection{Polynatural transformations}

Recall from \cite[Section 2.5]{Johnson_Yau_2dimCatsBook} that given two parallel morphisms of dioperads $F,G : \zP \to \zQ$ there is a notion of polynatural transformation between $F$ and $G$. Following \cite[Definition 2.5.16]{Johnson_Yau_2dimCatsBook}, a \emph{polynatural transformation} $\theta: F \to G$ is given by a family of unary operations in $\zQ$
\[ \theta_X : FX \to GX \]
indexed by the colors $X$ of $\zP$. These morphisms have to satisfy the obvious naturality condition with respect to (not necessarily unary) operations in $\zP$. This definition generalize in a straightforward manner to the enriched case.

By \cite[Theorem 2.5.19]{Johnson_Yau_2dimCatsBook} (suitably extended to the enriched setting), $\zV$-dioperads, morphisms of $\zV$-dioperads and polynatural transformations between them form a (large) $2$-category, that we denote $\Diop_\zV$.

One can define polynatural transformations between parallel morphisms of $\zV$-properads in a completely similar manner; the resulting (large) $2$-category will be denoted $\Prpd_\zV$.

Given a $\zV$-properad, one can forget the non-simply connected compositions, while keeping the same set of colors and objects of operations, to obtain its \emph{underlying $\zV$-dioperad}. This construction defines a $2$-functor that we denote
\begin{equation}
    \label{eqn:U^d}
    \zU^d \colon \Prpd_\zV \longrightarrow \Diop_\zV.
\end{equation}

\subsubsection{Underlying properad and dioperad of a symmetric monoidal category}

Symmetric monoidal $\zV$-categories have underlying properads and dioperads, as we now explain. 
The \emph{underlying properad functor} is the $2$-functor
\begin{equation*}
    \label{eqn:U^p}
    \zU^p \colon \Cato_\zV \longrightarrow \Prpd_\zV,
\end{equation*}
that sends a symmetric monoidal $\zV$-category $\zC$ to the $\zV$-properad $\zU^p\zC$ with colors given by the objects of $\zC$ and with operations given by 
\begin{equation*}
    \zU^p\zC((x_i)_{i\in I}, (y_j)_{j\in J}) := \zC(\otimes_{i} x_i, \otimes_{j}y_j),
\end{equation*}
where composition is induced by that of $\zC$ in the obvious way.
Composing this $2$-functor with that of \eqref{eqn:U^d}, we obtain the \emph{underlying dioperad ($2$-)functor}
\begin{equation*}
    \zU:= \zU^d\circ \zU^p \colon \Cato_\zV \longrightarrow \Diop_\zV.
\end{equation*}

\subsection{Envelopes}

We will recall explicit descriptions of the symmetric monoidal and properadic envelope functors, respectively denoted $\zE^p$ and $\zE^d$, which can be thought of as left adjoints to the functors $\zU^p$ and $\zU^d$. This interpretation is literally correct for the latter functor (see Proposition \ref{prop:envelope_E^d}), but only true in a weak sense for the former (see Proposition \ref{prop:envelope_E^p} and subsequent Remark \ref{rk:biadjunction}).

\subsubsection{Properadic envelope of a dioperad}
We describe the properadic envelope functor $\zE^d$, which is to be thought of as taking the free properad on a given dioperad, following \cite{Yau_Johnson_PROPsBook}.

First we need some terminology. Recall that given an $S$-colored graph $G$ and an $S$-colored graph $H_v$ of biprofile $(\inn_v,\out_v)$ for every vertex $v$ of $G$, that we call a \emph{substitution data}, we can substitute $H_v$ into the vertex $v$ to form a new $S$-colored graph denoted $G[\{H_v\}_v]$ (we refer to \cite[Chapter 5]{Yau_Johnson_PROPsBook} for details on this construction). With this notion of graph substitution, we can form the following category, which controls the shape of operations in the properadic envelope of a dioperad. 

\begin{df}[{\cite[Definition 12.5]{Yau_Johnson_PROPsBook}}]
    \label{df:extension_category}
    Given an $S$-biprofile $(\underline{x},\underline{y})$, the \emph{extension category} $\cG(\underline{x},\underline{y})$ has objects given by connected digraphs $G$ with biprofile $(\underline{x},\underline{y})$, morphisms $K\to G$ are strict isomorphism classes\footnote{A strict isomorphism of $S$-colored graphs is a graph isomorphism that moreover preserves the colorings and orderings. Restricting to strict isomorphism classes of substitution data ensures that the $\Hom$ sets in the extension category are small.} of substitution data $\{H_v\}_{v\in V(G)}$ such that $K = G[\{H_v\}_v]$ and every digraph $H_v$ is  connected and simply connected. Composition is given by associativity of graph substitution. 
\end{df}
\begin{rk}
    In the general framework of \cite{Yau_Johnson_PROPsBook}, the category $\cG(\underline{x},\underline{y})$ would be called the extension category associated with the inclusion of pasting schemes $\mathsf{Gr}_\mathrm{di}^\uparrow \leqslant \mathsf{Gr}_{\mathrm{c}}^\uparrow$, from the simply-connected pasting scheme (encoding dioperads) to the connected one (encoding properads).
\end{rk}

\begin{df}
    Let $\zO$ be a $\zV$-dioperad. We define the \emph{properadic envelope} of $\zO$ to be the $\zV$-properad $\zE^d\zO$ whose colors are those of $\zO$ and operations are given by the formula 
    \begin{equation}
        \label{eqn:envelope_d}
        \zE^d\zO(\underline{x},\underline{y}) := \mathop{\colim}\limits_{H\in \cG(\underline{x},\underline{y})} \zO[H],
    \end{equation}
    where we use the notation $\zO[H] := \otimes_{v\in H} \zO(\inn_v,\out_v)$, and the transition morphisms in the colimit diagram are induced by composition in $\zO$.
    Composition in $\zE^d\zO$ is given by grafting of digraphs in $\cG$.  
\end{df}

\begin{prop}
    \label{prop:envelope_E^d}
    For every $\zV$-dioperad $\zO$ and $\zV$-properad $\zP$, there is an isomorphism of categories
    \begin{equation}
        \label{eqn:2-adjunction_E^d}
        \Prpd_\zV(\zE^d\zO,\zP)
        \cong 
        \Diop_\zV(\zO,\zU^d\zP)
    \end{equation}
    which is natural in $\zO$ and $\zP$. In particular, the properadic envelope functor $\zE^d$ is left adjoint to the forgetful functor $\zU^d$.
\end{prop}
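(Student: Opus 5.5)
We construct the isomorphism \eqref{eqn:2-adjunction_E^d} explicitly, by restriction along a unit and extension via the universal property of the colimits \eqref{eqn:envelope_d}.

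\textbf{The unit.} For each biprofile $(\underline{x},\underline{y})$ of $\zO$, take the corolla $\mathfrak{c}_{\underline{x},\underline{y}}$ of Example \ref{ex:corolla}. It is an object of $\cG(\underline{x},\underline{y})$, and its colimit coprojection gives
\[
\eta\colon \zO(\underline{x},\underline{y}) = \zO[\mathfrak{c}_{\underline{x},\underline{y}}] \to \zE^d\zO(\underline{x},\underline{y}).
\]
We check that $\eta\colon \zO\to\zU^d\zE^d\zO$ is a morphism of dioperads. Composition in $\zE^d\zO$ is grafting. For a simply connected graph $G$, the grafted graph $G$ is substitution data into the corolla, which gives a morphism $G\to \mathfrak{c}$ in $\cG$. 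The transition map along this morphism is exactly $\gamma_G$ of $\zO$. Hence the coprojection from $G$ equals $\gamma_G$ followed by the coprojection from the corolla, which is precisely the compatibility with composition. Units are preserved for the same reason.

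\textbf{The functor and its inverse.} One direction sends $\Phi\colon\zE^d\zO\to\zP$ to $\zU^d\Phi\circ\eta$. For the inverse, fix a dioperad morphism $f\colon \zO\to\zU^d\zP$ with color map $\varphi$. For each $H\in\cG(\underline{x},\underline{y})$ define
\[
\zO[H]\xrightarrow{\otimes f}\zP[\varphi H]\xrightarrow{\gamma_{\varphi H}}\zP(\varphi\underline{x},\varphi\underline{y}).
\]
This uses that $H$ is connected, so $\zP$ composes along it. These maps form a cocone, and checking this is the main point. A morphism $K\to G$ in $\cG$ is given by simply connected substitution data $H_v$. The transition map $\zO[K]\to\zO[G]$ applies $\gamma^{\zO}_{H_v}$ at each vertex. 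Since $f$ commutes with simply connected compositions, and $\zP$ satisfies associativity for the substitution $K=G[\{H_v\}]$, the two composites agree. The cocone therefore induces $\tilde f\colon \zE^d\zO\to\zP$ on each biprofile, with color map $\varphi$.

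Compatibility of $\tilde f$ with grafting in $\zE^d\zO$ requires one more fact. Tensor products in $\zV$ commute with colimits in each variable, so $\zE^d\zO[G]$ is the colimit over tuples of graphs substituted into the vertices of $G$. On each such summand, $\tilde f\circ\gamma_G$ and $\gamma_G\circ\otimes\tilde f$ both reduce to $\zP$-composition along the total substituted graph, by associativity in $\zP$. Units are clear.

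\textbf{Mutual inverses and 2-cells.} We have $\tilde f\circ\eta=f$ because $\gamma$ along a corolla is the identity. In the other direction, $\widetilde{\Phi\eta}=\Phi$: every graph $H$ in the colimit is the grafting of its vertex corollas, so its coprojection is the $\zE^d\zO$-composite of corolla coprojections, and $\Phi$ preserves composition. On 2-cells, a polynatural transformation is a family of unary operations $FX\to GX$ in $\zP$, indexed by the colors of $\zO$, which are the same as those of $\zE^d\zO$. Naturality with respect to the operations of $\zE^d\zO$ reduces to naturality with respect to the operations of $\zO$, again because every operation of $\zE^d\zO$ is generated under grafting by the image of $\eta$. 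This gives the isomorphism of categories, and naturality in $\zO$ and $\zP$ is immediate from the constructions.

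The main obstacle is the well-definedness step: the cocone condition together with the commutation of $\otimes$ with the colimits. It requires care with strict isomorphism classes and orderings, but it reduces to associativity of properadic composition.
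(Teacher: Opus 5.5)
Your proposal is correct and is essentially the argument the paper delegates to Yau--Johnson (Theorem 12.1 and Lemma 12.8, applied to the inclusion of simply connected into connected graphs). In both, the unit is the corolla coprojection and the extension is properadic composition along the graphs indexing the colimit, well defined by associativity in $\zP$. The difference is only in packaging: you construct the hom-isomorphism of categories directly, handling 2-cells via the joint epimorphicity of the colimit legs, whereas the paper cites the unit--counit construction and leaves its upgrade to 2-cells and to the enriched setting to the reader.
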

\begin{proof}
    The second part of the statement, in the unenriched case, is a particular case of \cite[Theorem 12.1 and Lemma 12.8]{Yau_Johnson_PROPsBook} applied to the inclusion of pasting schemes $\mathsf{Gr}_\mathrm{di}^\uparrow \leqslant \mathsf{Gr}_{\mathrm{c}}^\uparrow$ from that of connected and simply-connected digraphs into all connected digraphs (see \cite[Example 12.10.(5)]{Yau_Johnson_PROPsBook}). 
    The proof therein consists in exhibiting the unit and counit for the adjunction; it is straightforward to extend these natural transformations to pseudonatural transformations, and then to reformulate this argument in the enriched setting. We leave these details to the reader. As a result, the adjunction between the underlying $1$-functors of $\zE^d$ and $\zU^p$ extends to a $2$-adjunction, in the sense of a natural isomorphism of categories of the form \eqref{eqn:2-adjunction_E^d}.
\end{proof}

\subsubsection{Symmetric monoidal envelope of a properad} 
\begin{df}
    Let $\zP$ be a $\zV$-properad. We define the \emph{symmetric monoidal envelope} of $\zP$ to be the symmetric monoidal $\zV$-category $\zE^p\zP$ whose objects are $\mathrm{Col}(\zP)$-profiles, i.e. finite sequences $\underline{x}=(x_i)_{i\in I}$ of colors of $\zP$, and whose morphism objects between $\underline{x}$ and $\underline{y}$ are given by the formula 
    \begin{equation}
        \label{eqn:def_monoidal_env}
        \zE^p\zP(\underline{x},\underline{y}) := \mathop{\colim}\limits_{A \in (\Fin_{{{}I^\mathrm{u}\amalg J^\mathrm{u}}/})^\simeq} \bigotimes_{a\in A} \zP\left(\underline{x}|_{I_a}; \underline{y}|_{J_a}\right)
    \end{equation}
    where the colimit is taken over the groupoid of finite sets under $I^\mathrm{u}\amalg J^\mathrm{u}$, and $I_a$ denotes the fiber of the map $I\to A$ over an element $a$ (and similarly for $J_a$).

    We can interpret morphisms as $A$-indexed unions of $\mathrm{Col}(\zP)$-colored corollas of the form $\mathfrak{c}_{\underline{x}|_{I_a}, \underline{y}|_{J_a}}$ together with a labelling of each corolla by an operation in $\zP$ of corresponding biprofile. When $A$ is a singleton, we will say that the morphism corresponding to the (unique) corolla is \emph{elementary}. Composition in the envelope $\zE^p\zP$ is obtained by first grafting the unions of corollas along the common colors, then contracting the inner edges while composing the operations accordingly, as illustrated in Figure \ref{fig:composition}.
    The symmetric monoidal structure on $\zE^{p}\zP$ is given by concatenation of sequences of colors.
\end{df}

\begin{figure}[!ht]
    \label{fig:composition}
    \begin{small}
    $
    \sigma=\begin{tikzpicture}
	\begin{pgfonlayer}{nodelayer}
		\node [style=small font circle] (0) at (-6.75, 0) {$\sigma_a$};
		\node [style=small] (1) at (-5.75, 4) {$x_2$};
		\node [style=small] (6) at (-8, -4) {$y_1$};
		\node [style=small] (7) at (-5.5, -4) {$y_3$};
		\node [style=small] (8) at (-3, -4) {$y_5$};
		\node [style=small font circle] (9) at (-4.25, 0) {$\sigma_b$};
		\node [style=small] (11) at (-7, 4) {$x_1$};
		\node [style=small] (12) at (-4.75, 4) {$x_3$};
		\node [style=small] (13) at (-3, 4) {$x_4$};
		\node [style=small] (15) at (-6.75, -4) {$y_2$};
		\node [style=small] (17) at (-4.25, -4) {$y_4$};
		\node [style=small font circle] (18) at (-5.5, 0) {$\sigma_c$};
		\node [style=small font circle] (19) at (-3, 0) {$\sigma_d$};
	\end{pgfonlayer}
	\begin{pgfonlayer}{edgelayer}
		\draw [in=90, out=-75] (0) to (7);
		\draw [in=90, out=-105] (0) to (6);
		\draw [in=270, out=90] (0) to (1);
		\draw [bend right=15] (12) to (9);
		\draw [in=90, out=-90] (9) to (17);
		\draw [bend right=15] (9) to (15);
		\draw [bend right=15, looseness=1.25] (9) to (13);
		\draw [bend right=15, looseness=1.25] (11) to (9);
		\draw [in=90, out=-45] (9) to (8);
	\end{pgfonlayer}
\end{tikzpicture}, \quad
    \tau=\begin{tikzpicture}
	\begin{pgfonlayer}{nodelayer}
		\node [style=small font circle] (18) at (2.5, 0) {$\tau_f$};
		\node [style=small] (19) at (1.75, 4) {$y_1$};
		\node [style=small] (23) at (3.25, 4) {$y_2$};
		\node [style=small] (24) at (1.5, -4) {$z_1$};
		\node [style=small] (26) at (3.5, -4) {$z_2$};
		\node [style=small font circle] (27) at (5.25, 0) {$\tau_e$};
		\node [style=small] (28) at (4.25, 4) {$y_3$};
		\node [style=small] (29) at (5.25, 4) {$y_4$};
		\node [style=small] (30) at (6.25, 4) {$y_5$};
	\end{pgfonlayer}
	\begin{pgfonlayer}{edgelayer}
		\draw [bend left=15] (18) to (26);
		\draw [bend right=15] (18) to (24);
		\draw [in=270, out=60] (18) to (23);
		\draw [in=270, out=120] (18) to (19);
		\draw (29) to (27);
		\draw [bend right=15, looseness=1.25] (27) to (30);
		\draw [bend right=15, looseness=1.25] (28) to (27);
	\end{pgfonlayer}
\end{tikzpicture},  \quad
    \begin{tikzpicture}
	\begin{pgfonlayer}{nodelayer}
		\node [style=small font circle] (0) at (-2.5, 1) {$\sigma_a$};
		\node [style=small] (1) at (-1.5, 4) {$x_2$};
		\node [style=small font circle] (5) at (0, 1) {$\sigma_b$};
		\node [style=small] (6) at (-2.75, 4) {$x_1$};
		\node [style=small] (7) at (-0.5, 4) {$x_3$};
		\node [style=small] (8) at (1.25, 4) {$x_4$};
		\node [style=small font circle] (11) at (-1.25, 1) {$\sigma_c$};
		\node [style=small font circle] (12) at (1.25, 1) {$\sigma_d$};
		\node [style=small font circle] (13) at (-2.25, -1.5) {$\tau_f$};
		\node [style=small] (16) at (-3, -4) {$z_1$};
		\node [style=small] (17) at (-1, -4) {$z_2$};
		\node [style=small font circle] (18) at (-0.25, -1.5) {$\tau_e$};
		\node [style=small] (20) at (4.5, 4) {$x_2$};
		\node [style=small font circle] (21) at (5, 0) {$\rho$};
		\node [style=small] (22) at (3.5, 4) {$x_1$};
		\node [style=small] (23) at (5.5, 4) {$x_3$};
		\node [style=small] (24) at (6.5, 4) {$x_4$};
		\node [style=small font circle] (25) at (4, 0) {$\sigma_c$};
		\node [style=small font circle] (26) at (6, 0) {$\sigma_d$};
		\node [style=small] (28) at (4, -4) {$z_1$};
		\node [style=small] (29) at (6, -4) {$z_2$};
		\node [style=none] (30) at (2.25, -1) {$\stackrel{\mathrm{contraction}}{\longrightarrow}$};
	\end{pgfonlayer}
	\begin{pgfonlayer}{edgelayer}
		\draw [in=270, out=90] (0) to (1);
		\draw [bend right=15] (7) to (5);
		\draw [bend right=15, looseness=1.25] (5) to (8);
		\draw [in=135, out=-60, looseness=1.25] (6) to (5);
		\draw [bend left=15] (13) to (17);
		\draw [bend right=15] (13) to (16);
		\draw [in=240, out=105] (13) to (0);
		\draw [in=210, out=30] (13) to (5);
		\draw [in=105, out=-105] (5) to (18);
		\draw [in=45, out=-60] (5) to (18);
		\draw [in=150, out=-45] (0) to (18);
		\draw [in=75, out=-90] (23) to (21);
		\draw [bend right=15, looseness=1.25] (21) to (24);
		\draw [bend right=15, looseness=1.25] (22) to (21);
		\draw [in=300, out=90] (29) to (21);
		\draw [in=240, out=90] (28) to (21);
		\draw [in=105, out=-90] (20) to (21);
	\end{pgfonlayer}
\end{tikzpicture}
    =\tau\circ\sigma
    $
    \end{small}
    \caption{Two morphisms $\sigma$ and $\tau$ in $\zE^p\zP$, represented as union of colored corollas, and their composition obtained by grafting and contraction of inner edges. Here the operation $\rho$ is obtained by the appropriate properadic composition of $\sigma_a$, $\sigma_b$, $\tau_e$ and $\tau_f$ in $\zP$.}
\end{figure}
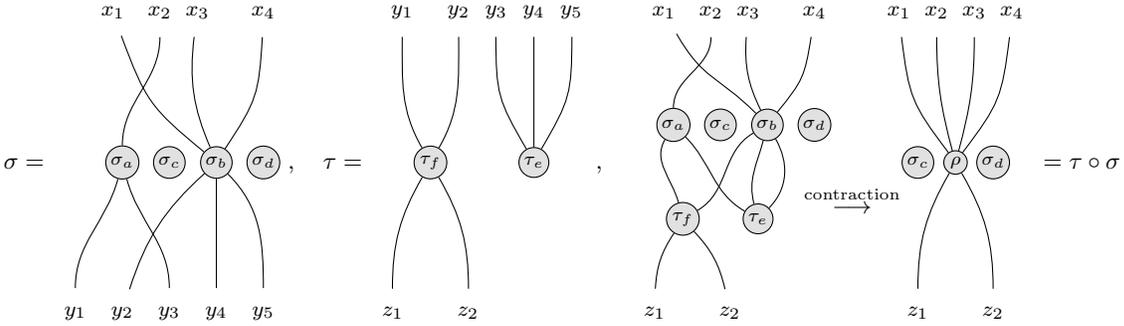

\begin{rk}
    Note that the construction of the symmetric monoidal envelope corresponds (in the unenriched setting) to that given in \cite[Section 2]{Beardsley-Hackney_labelled_cospan}, where properads are modelled in terms of Segal objects for an algebraic pattern of level graphs. Morever the authors enhance their construction to a biequivalence between the $2$-category of properads and that of \emph{labelled cospan categories} introduced by Steinebrunner in \cite{steinebrunner-surface_cat}. 
\end{rk}

For the purpose of this article, it will be enough to know that the symmetric monoidal envelope defines a $2$-functor $\zE^p\colon  \Prpd_\zV\to  \Cato_\zV $ which is left adjoint in a weak sense (see Remark \ref{rk:biadjunction}) to the forgetful functor (whence the terminology of envelope), as recorded in the following result.

\begin{prop}
    \label{prop:envelope_E^p}
    For every $\zV$-properad $\zP$ and every symmetric monoidal $\zV$-category $\zC$, there is an equivalence of categories
    \begin{equation}
        \label{eqn:biadjunction_E^p}
        \Cato_\zV(\zE^p\zP, \zC)
        \simeq 
        \Prpd_\zV(\zP,\zU^p\zC).
    \end{equation}
\end{prop}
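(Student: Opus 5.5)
We construct explicit functors in both directions and show they are mutually quasi-inverse.

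\textbf{From symmetric monoidal functors to properad maps.} Precompose with the unit properad morphism $\eta_\zP\colon \zP\to \zU^p\zE^p\zP$. It sends a color $x$ to the length-one profile $(x)$. It sends an operation in $\zP(\underline{x},\underline{y})$ to the corresponding elementary morphism $\otimes_i x_i\to \otimes_j y_j$ in $\zE^p\zP$, namely the coprojection into the colimit \eqref{eqn:def_monoidal_env} indexed by $A=\ast$. Compatibility of $\eta_\zP$ with properadic composition is exactly the grafting-and-contraction description of composition in $\zE^p\zP$ (Figure \ref{fig:composition}). Given a symmetric monoidal functor $\Phi\colon \zE^p\zP\to\zC$, we set
\[
\Phi\mapsto \zU^p(\Phi)\circ\eta_\zP .
\]
Here we use the structure isomorphisms $\Phi(\underline{x})\cong\otimes_i\Phi(x_i)$ to identify hom-objects, so that $\zU^p(\Phi)$ is a genuine properad map. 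Monoidal natural transformations $\Phi\Rightarrow\Phi'$ go, via their components at length-one profiles, to polynatural transformations. This defines a functor from the left-hand side of \eqref{eqn:biadjunction_E^p} to the right-hand side.

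\textbf{From properad maps to symmetric monoidal functors.} Given $f\colon \zP\to\zU^p\zC$, define $\widetilde f\colon\zE^p\zP\to\zC$ on objects by
\[
\widetilde f(\underline{x})=\otimes_{i\in I} f(x_i),
\]
using a fixed bracketing, which is harmless by coherence. On hom-objects, $\widetilde f$ is the map out of the colimit \eqref{eqn:def_monoidal_env} whose component at $A$ is
\[
\bigotimes_{a\in A}\zP(\underline{x}|_{I_a};\underline{y}|_{J_a})\to\bigotimes_{a\in A}\zC\bigl(\otimes_{I_a} f x_i,\otimes_{J_a} f y_j\bigr)\to \zC\bigl(\otimes_I f x_i,\otimes_J f y_j\bigr).
\]
The last arrow is the $\zV$-enriched tensoring of morphisms in $\zC$, followed by conjugation with the symmetry isomorphisms that shuffle $\otimes_a\otimes_{I_a}$ into $\otimes_I$, and likewise for $J$. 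These components are compatible with the isomorphisms in the groupoid $(\Fin_{I^\mathrm{u}\amalg J^\mathrm{u}/})^\simeq$ because the symmetry in $\zC$ is natural and coherent. By construction $\widetilde f$ is strictly monoidal on objects, and the monoidal structure maps are identities up to associators.

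\textbf{Checking functoriality (the main obstacle).} The hard step is showing that $\widetilde f$ preserves composition. Composing two unions of corollas in $\zE^p\zP$ grafts them into a possibly disconnected graph. The components of this graph are then contracted using the properadic compositions $\gamma_G$ of $\zP$, one connected graph $G$ at a time. On the $\zC$ side, one instead composes two tensor products of morphisms interleaved with symmetries. I would first reduce to the case where one morphism is elementary and the other is arbitrary, or better, to gluing two corollas along $k$ edges, using the biased description of properads. In that case, interchange and naturality of the braiding in $\zC$ rewrite the $\zC$-composite as $\gamma_G$ computed in $\zU^p\zC$ for the corresponding connected graph, tensored with identities. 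This equals $f(\gamma_G(\cdots))$ because $f$ is a properad map. Disconnected graphs are handled componentwise by the same interchange argument, and unitality is immediate.

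\textbf{Quasi-inverse.} The composite $f\mapsto\widetilde f\mapsto\zU^p(\widetilde f)\circ\eta_\zP$ recovers $f$ on the nose, since on elementary morphisms $\widetilde f$ is just $f$. For the other composite, start from a symmetric monoidal $\Phi$ and let $f=\zU^p(\Phi)\circ\eta_\zP$. Every morphism of $\zE^p\zP$ is a tensor product of elementary morphisms conjugated by symmetries. Hence the structure isomorphisms $\Phi(\underline{x})\cong\otimes_i\Phi(x_i)$ assemble into a monoidal natural isomorphism $\widetilde f\cong\Phi$. Naturality of this isomorphism is checked on each summand of the colimit, using that $\Phi$ is monoidal and symmetric. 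Finally, transformations correspond: a monoidal transformation is determined by its components on length-one profiles. This gives the equivalence \eqref{eqn:biadjunction_E^p}. It is only an equivalence, not an isomorphism, because $\Phi$ is determined up to its strong monoidal structure isomorphisms.
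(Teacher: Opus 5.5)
Your proposal is correct and follows the same route as the paper. Precomposing with the unit $\eta_\zP$ is the paper's functor $\Gamma$, and your $\widetilde f$ is its $\Upsilon\phi$; the paper defines $\Upsilon\phi$ by choosing a linear order on $A$ and checking independence using commutativity of $\End(\1_\zC)$, which is your compatibility with the groupoid isomorphisms. The quasi-inverse argument, $\Gamma\circ\Upsilon=\id$ and $\Upsilon\circ\Gamma\cong\id$ via the monoidal structure isomorphisms, is the same, and your explicit functoriality check for $\widetilde f$ fills in a step the paper leaves implicit.
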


\begin{rk}[Proposition \ref{prop:envelope_E^p} as a biadjunction]
    \label{rk:biadjunction}
    Note that $\zE^p$ and $\zU^p$, viewed as $1$-functors between the $1$-categories underlying $\Prpd_\zV$ and $\Cato_\zV$, do \emph{not} form an adjunction; in other words, the equivalence \eqref{eqn:biadjunction_E^p} is not a bijection on sets of objects. 
    The $2$-functors $\zE^p$ and $\zU^p$ are, however, part of a \emph{biadjunction}, which is to say that the equivalence of categories \eqref{eqn:biadjunction_E^p} is pseudonatural in $\zP$ and $\zC$. 
\end{rk}


\begin{rk}[$\infty$-categorical version of the symmetric monoidal envelope]
    \label{rk:env_properad_literature}
    An $\infty$-categorical version of Proposition \ref{prop:envelope_E^p} is established in \cite{Barkan-Steinebrunner_properads}; compare in particular formula \eqref{eqn:def_monoidal_env} with that of \cite[Lemma 3.1.7.]{Barkan-Steinebrunner_properads}. Instead of trying to deduce the $1$-categorical statement from its $\infty$-categorical counterpart, for completeness we prefer to give a more direct and elementary proof of the result.
\end{rk}

\begin{proof}[Proof of Proposition \ref{prop:envelope_E^p}]
    Given $\zP$ and $\zC$ as in the statement, we will define two functors
    \begin{equation*}
        \begin{tikzcd}
            \Gamma\colon \Cato_\zV(\zE^p\zP,\zC) 
            \ar[r, shift left=1] &
            \Prpd_\zV(\zP,\zU^p\zC) \colon \Upsilon
            \ar[l, shift left=1]
        \end{tikzcd}
    \end{equation*}
    and show they are quasi-inverse.

    We start by defining the functor $\Gamma$. Given a symmetric monoidal functor $F\colon \zE^p\zP\to \zC$, the morphism of properads $\Gamma F \colon \zP\to \zU^p\zC$ sends a color $x$ to $F((x))$, and an operation $\sigma \in \zP(\underline{x};\underline{y})$ to the following composite morphism in $\zC$ 
    \[
        \otimes_i F((x_i)) \cong F(\underline{x}) \xrightarrow{F(\sigma)}
        F(\underline{y}) \cong \otimes_j F((y_j)).
    \] 
    Given a morphism $\alpha\colon F\Rightarrow G$  in $\Cato_\zV(\zE^p\zP,\zC)$, we define $\Gamma\alpha$ componentwise by the formula $(\Gamma\alpha)_x = \alpha_{(x)}$; one then readily verifies that $\Gamma\alpha$ is polynatural, so that $\Gamma$ is well-defined.

    We turn to the definition of $\Upsilon$, which requires more care. Given an object $\phi\in \Prpd_\zV(\zP,\zU^p\zC)$, we define the functor $\Upsilon\phi$ as follows. It sends an object $\underline{x}=(x_i)_{i\in I}$ of the envelope $\zE^p\zP$ to $\otimes_i \phi(x_i)$ and an elementary morphism $f \in \zP(\underline{x};\underline{y})$ of the envelope to $\phi(f)$. Now suppose given a general morphism $f$ in $\zE^p\zP$, with associated map of finite sets  $p\colon I^\mathrm{u}\amalg J^\mathrm{u}\to A$ and labels $f_a\in \zP(\underline{x}|_{I_a},\underline{y}|_{J_a})$ for $a\in A$. To define $\phi(f)$, we first pick a linear order on $A$ and define a new linear order $\prec$ on $I^\mathrm{u}$ as follows: we say that $i \prec i'$ if $p(i) < p(i')$ (for the chosen order on $A$), or if $p(i)=p(i')$ and $i < i'$ in $I_a$ (for the order induced by $I$). Let $\sigma_I $ be the unique order-preserving permutation $I \to (I^\mathrm{u},\prec)$. We proceed similarly to obtain a linear order $\prec'$ on $J^\mathrm{u}$ and the associated order-preserving permutation $\tau \colon J\to (J^\mathrm{u},\prec')$. In particular, for every $a\in A$ the morphism $\phi(f_a)$ is of the form $\otimes_{i\in I_a} \phi(x_{\sigma(i)}) \to \otimes_{j\in J_a} \phi(y_{\tau(j)})$.
    The morphism $\phi(f)$ is then defined as the composite
    \[
        \otimes_{i\in I} \phi(x_i)\cong
        \otimes_{a\in A}\otimes_{i\in I_a} \phi(x_{\sigma(i)})
        \xrightarrow{\otimes_{a} \phi(f_a)}
        \otimes_{a\in A}\otimes_{j\in J_a} \phi(y_{\tau(j)}) \cong 
        \otimes_{j\in J} \phi(y_j),
    \]
    where the two isomorphisms are induced by the permutations $\sigma$ and $\tau^{-1}$ using the braiding of $\zC$. We leave the reader to check that this definition is independent of the choice of linear order on $A$, using in particular commutativity of the monoid of endomorphisms of $\1_\zC$.
    The symmetric monoidal structure on $\phi$ is the obvious one.
    Therefore we have defined the symmetric monoidal functor $\Upsilon \phi$.
    To specify $\Upsilon$ on morphisms, let $\alpha$ be a polynatural transformation between two morphisms of properads $\phi,\psi\colon \zP\to \zU^p\zC$. We define $\Upsilon\alpha \colon \Upsilon\phi\Rightarrow \Upsilon\psi$ by the formula $(\Upsilon\alpha)_{\underline{x}} = \otimes_i \alpha_{x_i}$; then $\Upsilon \alpha$ is natural, since $\alpha$ was polynatural, and it is monoidal by construction. This defines the functor $\Upsilon$.

    It is easy to verify that $\Gamma\circ \Upsilon=\id$. For the other composite, given a symmetric monoidal functor $F\in \Cato_\zV(\zE^p\zP,\zC)$, we have a monoidal natural isomorphism $ \Upsilon\circ \Gamma(F)\cong F$ given on components by the symmetric monoidal structure $\otimes  F((x_i))\cong F(\underline{x})$ of $F$; this natural isomorphism  is easily seen to be natural in $F$, so that we obtain a natural isomorphism of functors $\Upsilon\circ \Gamma \cong \id$, which concludes the proof.
\end{proof}

\subsubsection{Symmetric monoidal envelope of a dioperad}
By composing the above two constructions we obtain the $2$-functor
\[
    \zE := \zE^p\circ\zE^d \colon \Diop_\zV \to \Cato_\zV
\]
that we call the \emph{symmetric monoidal envelope of a dioperad}.
As a consequence of Propositions \ref{prop:envelope_E^d} and \ref{prop:envelope_E^p}, we deduce the following result.
\begin{cor}
    \label{cor:biadjunction_envelope}
    For every $\zV$-dioperad $\zO$ and every symmetric monoidal $\zV$-category $\zC$, there is an equivalence of categories
    \begin{equation*}
        \Cato_\zV(\zE\zO,\zC) \simeq 
        \Diop_\zV(\zO,\zU\zC).
    \end{equation*}
\end{cor}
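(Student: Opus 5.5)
The plan is to compose the two equivalences already established. By definition $\zE = \zE^p\circ\zE^d$ and $\zU = \zU^d\circ\zU^p$. I will therefore obtain the equivalence of Corollary \ref{cor:biadjunction_envelope} as a chain of two steps.

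First, I apply Proposition \ref{prop:envelope_E^p} to the $\zV$-properad $\zP := \zE^d\zO$ and the symmetric monoidal $\zV$-category $\zC$. This gives an equivalence of categories
\[
\Cato_\zV(\zE^p\zE^d\zO,\zC)\simeq \Prpd_\zV(\zE^d\zO,\zU^p\zC).
\]
Second, I apply Proposition \ref{prop:envelope_E^d} to the dioperad $\zO$ and the properad $\zU^p\zC$. This gives an isomorphism of categories
\[
\Prpd_\zV(\zE^d\zO,\zU^p\zC)\cong \Diop_\zV(\zO,\zU^d\zU^p\zC)=\Diop_\zV(\zO,\zU\zC).
\]
Composing an equivalence with an isomorphism yields an equivalence, and this is exactly the statement.

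There is no real obstacle here. The only point requiring care is that the first step is merely an equivalence rather than an isomorphism, which is why the conclusion is stated up to equivalence.

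If one also wants pseudonaturality in $\zO$ and $\zC$, there are two ingredients. The first is the pseudonaturality of \eqref{eqn:biadjunction_E^p} from Remark \ref{rk:biadjunction}. The second is the strict naturality in Proposition \ref{prop:envelope_E^d}, which holds because $\zU^p$ and $\zE^d$ are $2$-functors. Together these show that the composite is pseudonatural, so $\zE$ is left biadjoint to $\zU$. This stronger statement is not needed for the claim as stated.
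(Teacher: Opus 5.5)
Your proposal is correct and is exactly the paper's argument: the corollary is deduced by composing the equivalence of Proposition \ref{prop:envelope_E^p}, applied to the properad $\zE^d\zO$, with the isomorphism of Proposition \ref{prop:envelope_E^d}, applied to the properad $\zU^p\zC$, using $\zE=\zE^p\circ\zE^d$ and $\zU=\zU^d\circ\zU^p$.
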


Using formulas \eqref{eqn:envelope_d} and \eqref{eqn:def_monoidal_env}, the symmetric monoidal envelope $\zE\zO$ of a $\zV$-dioperad $\zO$ admits the following explicit description. Its objects are finite sequences $\underline{x}=(x_i)_{i\in I}$ of colors $x_i\in \zO$ and its operations are given by the formula
\begin{equation}
    \label{eqn:envelope_full}
    \zE\zO(\underline{x},\underline{y}) 
    = \mathop{\colim}\limits_{
        A \in (\Fin_{{{I^\mathrm{u}\amalg J^\mathrm{u}}/})^\simeq}
    }
    \bigotimes_{a\in A}
    \mathop{\colim}\limits_{
        G\in \cG(\underline{x}|_{I_a}, \underline{y}|_{J_a})
    }
    \zO[G].
\end{equation}
Analogously to the case of the envelope of a properad, we can interpret operations in \eqref{eqn:envelope_full} as (possibly disconnected) $\mathrm{Col}(\zO)$-colored digraphs, with a labelling of every vertex $v$ by an operation in $\zO$ of biprofile $(\inn_v,\out_v)$.

\section{Frobenius monoidal functors as morphisms of dioperads}
\label{sec:frob_monoidal_as_morphisms_dioperads}

In the same way that lax symmetric monoidal functors preserve algebras over operads and colax symmetric monoidal functors preserve coalgebras over operads, (separable) Frobenius monoidal functors preserve algebras over dioperads (resp. properads). In fact, in both cases such Frobenius structures are characterized by this property; this statement is made precise in Theorem \ref{thm:frob_as_diop_map}, which is the main result of this section.

\subsection{Frobenius monoidal structures}\label{sect:FrobMonFunct}

Recall that a \emph{lax symmetric monoidal structure} on a functor  $G\colon \zC\to \zD$ between symmetric monoidal $\zV$-categories is a pair $(\nabla, \nabla_0)$ consisting of a natural transformation $\nabla \colon G(-)\otimes G(-)\to G(-\otimes -)$ and a morphism $\nabla_0\colon \1_\zD\to G(\1_\zC)$ in $\zD$ such that $\nabla$ is associative, unital and compatible with the symmetry isomorphisms.
The notion of a \emph{colax symmetric  monoidal structure} $(\Delta, \Delta_0)$ is defined dually.

\begin{nota}
    Given $x_1,\dots, x_n\in \zC$, we let $\nabla_{x_1,\dots,x_n}\colon G(x_1) \otimes \dots \otimes G(x_n) \to G(x_1 \otimes \dots \otimes x_n)$ denote the morphism obtained by iterated composition of $\nabla$, which is well-defined up to a unique isomorphism.
\end{nota}

We recall the definition of a Frobenius monoidal functor.

\begin{df}
    Let $G\colon \zC\to \zD$ be a functor between symmetric monoidal $\zV$-categories.
    A \emph{Frobenius monoidal structure}\footnote{A Frobenius monoidal structure in the above sense should arguably be called \emph{commutative}; however we remove this adjective for simplicity, as we will not consider the non-commutative version in this paper.} on $G$ is the data of a lax symmetric monoidal structure $(\nabla,\nabla^0)$ and a colax symmetric monoidal structure $(\Delta,\Delta^0)$ satisfying the following condition: for any objects $A$, $B$ and $C$ in $\zC$, the following diagram commutes
            \begin{equation}
                \label{eqn:Frobenius_relation} 
                \tag{\text{Frobenius relation}}
                \begin{tikzcd}[column sep = large]
                    {GA\otimes G(B\otimes C)} & {G(A\otimes B\otimes C)} \\
                    {GA\otimes GB\otimes GC} & {G(A\otimes B)\otimes GC}
                    \arrow["{\nabla_{A,B\otimes C}}", from=1-1, to=1-2]
                    \arrow["{\id_A \otimes \Delta_{B,C}}"', from=1-1, to=2-1]
                    \arrow["{\Delta_{A\otimes B,C}}", from=1-2, to=2-2]
                    \arrow["{\nabla_{A,B}\otimes \id_C}"', from=2-1, to=2-2]
                \end{tikzcd}
            \end{equation}
    If moreover $\nabla_{X,Y}\circ \Delta_{X,Y} = \id_{G(X\otimes Y)}$, we say that the Frobenius monoidal structure is \emph{separable}.
\end{df}

We now define what it means for a natural transformation between Frobenius monoidal functors to be compatible with these structures.
\begin{df}
    \label{df:frob_natural_transformation}
    Let $F,G\colon \zC\to \zD$ be two Frobenius monoidal functors between symmetric monoidal $\zV$-categories. A natural transformation $\alpha\colon F\Rightarrow G$ is said to be \emph{Frobenius monoidal} if it is both monoidal with respect to the lax symmetric monoidal structures on $F$ and $G$ and comonoidal with respect to their colax symmetric monoidal structures. In more explicit terms, this means that for every $x,y\in \zC$ the following four squares commute:
    \begin{equation*}
        \begin{tikzcd}
        	{F(x)\otimes F(y)} & {G(x)\otimes G(y)} \\
        	{F(x\otimes y)} & {G(x\otimes y)}
        	\arrow["{\alpha_{x}\otimes \alpha_{y}}", from=1-1, to=1-2]
        	\arrow["{\nabla^F_{x,y}}"', from=1-1, to=2-1]
        	\arrow["{\nabla^G_{x,y}}", from=1-2, to=2-2]
        	\arrow["{\alpha_{x\otimes y}}"', from=2-1, to=2-2]
        \end{tikzcd}
        \qquad
        \begin{tikzcd}
        	{F(x\otimes y)} & {G(x\otimes y)} \\
        	{F(x)\otimes F(y)} & {G(x)\otimes G(y)}
        	\arrow["{\alpha_{x\otimes y}}", from=1-1, to=1-2]
        	\arrow["{\Delta^F_{x,y}}"', from=1-1, to=2-1]
        	\arrow["{\Delta^G_{x,y}}", from=1-2, to=2-2]
        	\arrow["{\alpha_{x}\otimes \alpha_{y}}"', from=2-1, to=2-2]
        \end{tikzcd}
    \end{equation*}
    \begin{equation*}
        \begin{tikzcd}
        	& {\1_\zD} & \\
        	{F(\1_\zC)} && {G(\1_\zC)}
        	\arrow["{\nabla_0^F}"', from=1-2, to=2-1]
        	\arrow["{\nabla_0^G}", from=1-2, to=2-3]
        	\arrow["{\alpha_{\1_\zC}}", from=2-1, to=2-3]
        \end{tikzcd} 
        \qquad 
        \begin{tikzcd}
        	{F(\1_\zC)} && {G(\1_\zC)} \\
        	& {\1_\zD}
        	\arrow["{\alpha_{\1_\zC}}", from=1-1, to=1-3]
        	\arrow["{\Delta_0^F}"', from=1-1, to=2-2]
        	\arrow["{\Delta_0^G}", from=1-3, to=2-2]
        \end{tikzcd}
    \end{equation*}
\end{df}

Symmetric monoidal $\zV$-categories, Frobenius monoidal functors and Frobenius monoidal natural transformations assemble into a $2$-category $\Frob^\otimes_\zV$.

\subsection{Characterization of Frobenius monoidal functors}
The goal of this section is to establish the following result.

\begin{thm}
    \label{thm:frob_as_diop_map}
    Let $\zC, \zD$ be symmetric monoidal $\zV$-categories. Then there are equivalences of categories
    \begin{enumerate}
        \item 
            between morphisms of dioperads $\zU\zC\to \zU\zD$ and Frobenius monoidal functors $\zC\to \zD$
            \[
                \Diop_\zV(\zU\zC,\zU\zD)\simeq \mathbf{Frob}^{\otimes}_\zV(\zC,\zD),
            \]
        \item 
            between morphisms of properads $\zU^p\zC\to \zU^p\zD$ and separable Frobenius monoidal functors $\zC\to \zD$
            \[
                \Prpd_\zV(\zU^p\zC,\zU^p\zD)\simeq \mathbf{Frob}^{\otimes,\mathrm{sep}}_\zV(\zC,\zD).
            \]
    \end{enumerate}
\end{thm}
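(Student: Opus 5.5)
We construct explicit functors in both directions and check that they are quasi-inverse, in the same spirit as the proof of Proposition \ref{prop:envelope_E^p}.

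\emph{From Frobenius functors to dioperad morphisms.} Given a Frobenius monoidal functor $(G,\nabla,\nabla_0,\Delta,\Delta_0)$, we define $\Phi_G\colon \zU\zC\to\zU\zD$ on colors by $x\mapsto Gx$. An operation $f\in\zC(\otimes_i x_i,\otimes_j y_j)$ is sent to the composite
\[
\otimes_i G x_i\xrightarrow{\nabla_{x_1,\dots,x_n}} G(\otimes_i x_i)\xrightarrow{G f} G(\otimes_j y_j)\xrightarrow{\Delta_{y_1,\dots,y_m}}\otimes_j G y_j,
\]
with $\nabla_0$ and $\Delta_0$ used for empty profiles. Identities are clearly preserved. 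By the biased description of dioperads, it then suffices to check compatibility with grafting of two corollas along a single color. Take $g\colon \otimes \underline{u}\to \underline{y}'\otimes b\otimes\underline{y}''$ and $f\colon \underline{x}'\otimes b\otimes \underline{x}''\to \otimes\underline{z}$. The composite $\Phi(f)\circ_b\Phi(g)$ contains the middle segment $\nabla_{\underline{x}',b,\underline{x}''}\circ(\id\otimes\Delta_{\underline{y}',b,\underline{y}''}\otimes\id)$. The key lemma is a generalized Frobenius relation:
\[
\nabla_{\underline{x}',b,\underline{x}''}\circ(\id\otimes\Delta_{\underline{y}',b,\underline{y}''}\otimes\id)=(\id\otimes\Delta_{\underline{y}',\underline{x}'b\underline{x}''... }\otimes\id)\circ\cdots,
\]
that is, more conceptually, that $\nabla$ over a list containing $Gb$ precomposed with $\Delta$ producing $Gb$ equals $\Delta$ followed by $\nabla$ along the merged object $G(\underline{x}'\underline{y}'' )$-type factorization. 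We will prove it by induction on the lengths of the profiles, using the binary \eqref{eqn:Frobenius_relation}, its mirror version (which follows from the symmetry compatibility of $\nabla$ and $\Delta$), associativity and coassociativity, and naturality of $\nabla$ and $\Delta$ to move $Gf$ and $Gg$ past them. The unit cases, where the profiles are empty, use unitality of $\nabla_0$ and $\Delta_0$. A Frobenius natural transformation is sent to the family of its components, and polynaturality follows from the four squares of Definition \ref{df:frob_natural_transformation}.

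\emph{From dioperad morphisms to Frobenius functors.} Given $\phi\colon\zU\zC\to\zU\zD$, define $G=\phi$ on objects and on unary operations, which gives a $\zV$-functor. Set $\nabla_{x,y}:=\phi(\id_{x\otimes y})$, viewed as an operation $(x,y;x\otimes y)$, and $\Delta_{x,y}:=\phi(\id_{x\otimes y})$, viewed as $(x\otimes y;x,y)$. Set $\nabla_0:=\phi(\id_{\1})\in\zU\zC(\ ;\1)$ and $\Delta_0:=\phi(\id_\1)\in\zU\zC(\1;\ )$. Naturality, (co)associativity, unitality and symmetry all come from $\phi$ preserving the corresponding dioperadic compositions of identities in $\zU\zC$, each of which is a tree with one internal edge. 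The Frobenius relation is $\phi$ applied to the equality of two one-edge graftings, both equal to $\id_{A\otimes B\otimes C}$ viewed in $\zU\zC(A,B\otimes C;A\otimes B,C)$.

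\emph{Equivalence.} Composing the two constructions starting from a Frobenius functor returns it on the nose. Starting from $\phi$, we must show $\phi(f)=\Delta\circ\phi(f)\circ\nabla$. This holds because, in $\zU\zC$, $f$ equals the simply connected composite of $\id_{\otimes x}$ (as an $n$-to-$1$ operation), then $f$ (as a unary operation), then $\id_{\otimes y}$ (as a $1$-to-$m$ operation). Each grafting there is along a single color, so $\phi$ preserves it. The iterated $\nabla_{x_1,\dots,x_n}$ agrees with $\phi$ of the $n$-ary identity by the same argument. These identifications are natural in $\phi$ and in $G$.

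\emph{Separable case.} We replace the dioperadic gluing by properadic gluing and use Proposition-style biased generators for properads. The extra relation required is invariance under the genus-creating graft $\nabla_{X,Y}\circ\Delta_{X,Y}$ glued along two colors, which equals $\phi(\id_{X\otimes Y})=\id$; this is exactly separability. Conversely, when checking general connected graftings along $k$ edges, we collapse $\Delta\circ\nabla$ pairs using separability together with the generalized Frobenius lemma.

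The main obstacle is the bookkeeping in the generalized Frobenius lemma (and, in the properadic case, the $k$-edge analogue). We will handle it by using string-diagram calculus and an induction on profile lengths.
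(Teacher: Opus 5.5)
Your proposal is correct. Its key computation is the same as the paper's, but you organize the argument differently.

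\emph{The paper's route.} The paper never works with $\Diop_\zV(\zU\zC,\zU\zD)$ directly. It first replaces it by $\Cato_\zV(\zE\zU\zC,\zD)$ through the envelope biadjunction (Corollary~\ref{cor:biadjunction_envelope}). It then observes that the canonical morphisms $\nabla_{\underline{x}}$, $\Delta_{\underline{x}}$ of the envelope satisfy the Frobenius relation, and also separability in $\zE^p\zU^p\zC$. Finally it defines $\Lambda(G)$ on an arbitrary labelled, possibly disconnected digraph by the vertex-by-vertex composite \eqref{eqn:def_Lambda(G)f}. This forces it to check independence of the topological order, invariance under simply connected substitution (i.e.\ well-definedness on the colimit over the extension category), and functoriality.

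\emph{Your route.} You define the morphism of dioperads on corollas only, $f\mapsto \Delta_{\underline{y}}\circ Gf\circ\nabla_{\underline{x}}$, and check only the generating graftings: one-edge graftings, plus, in the properadic case, $k$-edge graftings, which you correctly reduce to separability together with the lemma. Your generalized Frobenius lemma is exactly the substitution check that the paper reduces to and attributes to \eqref{eqn:Frobenius_relation}. Restricting the paper's $\Lambda(G)$ to elementary morphisms recovers your $\Phi_G$.

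\emph{What each buys.} Your route is more elementary and self-contained: it needs neither Proposition~\ref{prop:envelope_E^p} nor the colimit description of $\zE\zU\zC$. It even gives an isomorphism of categories, since the colours of $\phi$ and the values of $G$ agree on the nose. The paper's route gives a conceptual statement: $\zC\to\zE\zU\zC$ (resp.\ $\zC\to\zE^p\zU^p\zC$) is the universal (separable) Frobenius monoidal functor. That is the form one would want for the $\infty$-categorical programme sketched in the introduction.

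\emph{Two points to tidy up.}
\begin{enumerate}
\item State explicitly that $\Phi_G$ is equivariant for permutations of inputs and outputs. This is immediate from the symmetry of $\nabla$ and $\Delta$, but it is part of being a morphism of dioperads.
\item Replace your garbled display by a clean lemma. For objects $P,Q,Y',Y'',b$ of $\zC$,
\[
(\id_{GY'}\otimes\nabla_{P,b,Q}\otimes\id_{GY''})\circ\sigma\circ(\id_{GP}\otimes\Delta_{Y',b,Y''}\otimes\id_{GQ})=\Delta_{Y',PbQ,Y''}\circ G(\sigma)\circ\nabla_{P,Y'bY'',Q},
\]
where $\sigma$ denotes the evident braidings. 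This follows from \eqref{eqn:Frobenius_relation}, its mirror and symmetry. The version with profiles $\underline{x}',\underline{x}'',\underline{y}',\underline{y}''$ then follows by (co)associativity.
\end{enumerate}
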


\begin{rk}
    This theorem is somewhat similar in spirit to the main result of \cite{McCurdy-Street}, in which the authors show that Frobenius monoidal functors preserve equations where both sides are values of connected and simply connected string diagrams, while separable ones preserve all connected string diagrams.
\end{rk}

\begin{proof}
    We start by proving the second claim. 
    \paragraph{Proof of the properadic statement.}
    By Proposition \ref{prop:envelope_E^p}, there is an equivalence of categories
    \begin{equation*}
        \Prpd_\zV(\zU^p\zC, \zU^p\zD) \simeq \Cato_\zV(\zE^p\zU^p\zC,\zD)
    \end{equation*}
    so that the proof reduces to producing mutually inverse functors
    \begin{equation}
        \label{eqn:phi_psi}
        \begin{tikzcd}
            \Phi\colon \Cato_\zV(\zE^p\zU^p\zC,\zD) \ar[r,shift left=1] &
            \mathbf{Frob}^{\otimes,\mathrm{sep}}_\zV(\zC,\zD)\colon \Psi. \ar[l,shift left=1] 
        \end{tikzcd}
    \end{equation}

    \paragraph{Step 1: description of $\zE^p\zU^p\zC$.}
    By formula \eqref{eqn:def_monoidal_env}, we see that the symmetric monoidal category $\zE^p\zU^p\zC$ has objects given by sequences of objects in $\zC$ and operations by 
    \begin{equation*}
        \zE^p\zU^p\zC(\underline{x},\underline{y}) 
        = \mathop{\colim}\limits_{A \in (\Fin_{{I^\mathrm{u}\amalg J^\mathrm{u}}/})^\simeq} \bigotimes_{a\in A} \zP\left( \mathop{\otimes}\limits_{i\in I_a} x_i, \mathop{\otimes}\limits_{j\in J_a} y_j\right).
    \end{equation*}
    In other words, morphisms can be represented as unions of corollas with input leaves labelled with the $x_i$ and output leaves with the $y_j$, and vertices by morphisms in $\zC$ from the tensor product of the inputs to that of the outputs.
    The tensor structure on $\zE^p\zU^p\zC$ is given by concatenation, with the empty sequence $\emptyset$ as unit.

    In particular, given any object $\underline{x}\in \zE^p\zU^p\zC$, we can define two special morphisms $\Delta_{\underline{x}}$ and $\nabla_{\underline{x}}$ as follows. 
    The morphism $\nabla_{\underline{x}}\colon \underline{x}\to \otimes_i x_i$ is given by the corolla $\mathfrak{c}_{\underline{x},\otimes_i x_i}$ labelled with the morphism $\id_{\otimes_i x_i}$ in $\zC$. 
    Similarly the morphism $\Delta_{\underline x} \colon \otimes_i x_i \to \underline{x}$ is given by the corolla $\mathfrak{c}_{\otimes_i x_i,\underline{x}}$ labelled with $\id_{\otimes_i x_i}$. Using the description of composition in the monoidal envelope in terms of grafting and contraction of corollas, one readily sees that
    \begin{equation}
        \label{eqn:separability}
        \nabla_{x,y} \circ \Delta_{x,y} = \id_{(x,y)}
    \end{equation}
    and that the following square commutes
    \begin{equation}
        \label{eqn:frobenius_envelope}
        \begin{tikzcd}[column sep = large]
            (x \otimes y, z) \ar[r,"{\nabla_{x \otimes y, z}}"]\ar[d,"{(\Delta_{x,y}, \id_z)}"left]	& x \otimes  y \otimes z \ar[d,"{\Delta_{x,y \otimes z}}"] \\
            (x,y,z) \ar[r,"{(\id_x, \nabla_{y,z})}"]		& (x, y \otimes z)
        \end{tikzcd} 
    \end{equation}
    for any $x,y,z\in \zC$. There is also a morphism $\nabla_0\colon \emptyset\to \1_\zC$ that satisfies a unitality property with respect to $\nabla$, and dually a counital morphism $\Delta_0\colon \1_\zC\to \emptyset$ with respect to $\Delta$.

    \paragraph{Step 2: construction of the functors $\Phi$ and $\Psi$.}
    We now define the two functors of \eqref{eqn:phi_psi}, starting with $\Phi$. Given a symmetric monoidal functor $F\colon \zE^p\zU^p\zC\to \zD$, the functor $\Phi(F)\colon \zC\to \zD$ is simply given by restricting $F$ along the unit $\zC\to \zE^p\zU^p\zC$. Using the relations  \eqref{eqn:separability} and \eqref{eqn:frobenius_envelope} computed in $\zE^p\zU^p\zC$, we see that the following definitions, in which the isomorphisms are given by the symmetric monoidal structure of $F$,
    \begin{align}
        \label{eqn:def_frob_structure_Phi(F)}
        \nabla^{\Phi(F)}_{x,y} &\colon F(x)\otimes F(y) \xrightarrow{\cong} F(x,y) \xrightarrow{F(\nabla_{(x,y)})} F(x\otimes y)\\ 
        \notag
        \nabla_0^{\Phi(F)} &\colon \1_\zD \xrightarrow{\cong}F(\emptyset) \xrightarrow{F(\nabla_0)} F(\1_\zC) \\ 
        \notag
        \Delta^{\Phi(F)}_{x,y} &\colon F(x\otimes y) \xrightarrow{F(\Delta_{(x,y)})} F(x,y) \xrightarrow{\cong} F(x)\otimes F(y)\\ 
        \notag
        \Delta^{\Phi(F)}_0 &\colon F(\1_\zC) \xrightarrow{F(\Delta_0)} F(\emptyset) \xrightarrow{\cong} \1_\zD,  
    \end{align}
    endow $\Phi(F)$ with a separable Frobenius monoidal structure. At the level of morphisms, given two symmetric monoidal functors $F,G\in \Cato_\zV(\zE^p\zU^p\zC,\zD)$ and a monoidal natural transformation $\alpha\colon F\Rightarrow G $ between them, the natural transformation $\Phi\alpha$ is defined in components via the formula $(\Phi\alpha)_{x}=\alpha_{(x)}$. One then easily verifies that it is Frobenius monoidal, using that $\alpha$ is monoidal; hence we have constructed the functor $\Phi$.

    We turn to defining the functor $\Psi$. 
    Given a  functor $G\colon \zC\to \zD$ endowed with a separable Frobenius monoidal structure $(\nabla^G, \nabla_0^G, \Delta^G, \Delta^G_0)$, we define the functor $\Psi(G)\colon \zE^p\zU^p\zC\to \zD$ as follows. At the level of objects, it sends a sequence $\underline{x}=(x_i)_{i\in I}$ to $\otimes_i G(x_i)$; at the level of morphisms, $\Psi(G)$ sends an elementary morphism $\underline{x}\to \underline{y}$ labelled with $f\colon \otimes_i x_i \to \otimes_j y_j$, to the composite $\Delta^G_{y_1,\dots, y_m}\circ G(f) \circ \nabla^G_{x_1,\dots, x_n}$. To define $\Psi(G)$ on general morphisms, we proceed as in the proof of Proposition \ref{prop:envelope_E^p} (for the functor $\Upsilon$) to reduce to elementary morphisms.
    It follows from the separability assumption that this assignment  $\Psi(G)$ defines a functor, which carries an obvious symmetric monoidal structure. It remains to define $\Psi$ on morphisms. To do so, consider a Frobenius monoidal natural transformation $\alpha\colon F\Rightarrow G$ between two separable Frobenius monoidal functors. We define $\Psi\alpha$  via the formula $(\Psi\alpha)_{\underline{x}} = \otimes_i \alpha_{x_i}\colon \otimes_i F(x_i)\to \otimes_i G(x_i)$; naturality follows from $\alpha$ being Frobenius monoidal, while the fact that it is monoidal is obvious. 

    We have defined the two functors $\Psi$ and $\Phi$. One easily verifies that the composite  $\Phi\circ \Psi = \id$. One the other hand, there is a natural isomorphism $\Psi\circ\Phi\cong \id$ given for every symmetric monoidal functor $F\colon \zE^p\zU^p\zC\to \zD$ by its associated symmetric monoidal structure $\Psi\circ\Phi(\underline{x})=\otimes_i F(x_i)\cong F(\underline{x})$, hence the first statement is proven.

    \paragraph{Proof of the dioperadic statement.}
    We now turn to the dioperadic case. The strategy of proof is similar to the properadic case, the main difference being that the description of the $\zE\zU\zC$ is more involved in the dioperadic case. 
    By Corollary \ref{cor:biadjunction_envelope}, there is a equivalence of categories
    \begin{equation*}
        \Diop_\zV(\zU\zC, \zU\zD) \simeq \Cato_\zV(\zE\zU\zC,\zD).
    \end{equation*}
    We will produce mutually inverse functors
    \begin{equation*}
        \label{eqn:theta_lambda}
        \begin{tikzcd}
            \Theta\colon \Cato_\zV(\zE\zU\zC,\zD) \ar[r,shift left=1] &
            \mathbf{Frob}^{\otimes}_\zV(\zC,\zD)\colon \Lambda \ar[l,shift left=1]
        \end{tikzcd}
    \end{equation*}
    
    \paragraph{Step 1: description of $\zE\zU\zC$.}
    Recall that the objects of $\zE\zU\zC$ are sequences $\underline{x}=(x_i)_{i\in I}$ of objects $x_i\in \zC$, with symmetric monoidal structure given by concatenation. By equation \eqref{eqn:envelope_full}, operations in $\zE\zU\zC$ are given by (possibly disconnected) digraphs that are colored by sequences in $\zC$, together with a labelling of every vertex $v$ by a morphism in $\zC$ of the form $\otimes \,\inn_v \to \otimes\, \out_v$. 

    As in the proof of the properadic statement, for every object $\underline{x}\in \zE\zU\zC$ we can define two associated morphisms $\nabla_{\underline{x}}\colon \underline{x} \to \otimes_i x_i$ and $\Delta_{\underline{x}}\colon \otimes_i x_i \to \underline{x}$. 
    One then easily verifies that the Frobenius relation \eqref{eqn:frobenius_envelope} is satisfied, using that contraction of subtrees correspond to morphisms in the extension category $\cG$ of Definition \ref{df:extension_category}. 

    \paragraph{Step 2: construction of the functors $\Theta$ and $\Lambda$.}
    We first define the functor $\Theta$, in a completely similar way to the functor $\Phi$ in the properadic case. More precisely, let $F\colon \zE\zU\zC\to \zD$ a symmetric monoidal functor. The functor $\Theta(F)\colon \zC\to \zD$ is obtained by restricting $F$ along the unit $\zC\to \zE\zU\zC$. As before, using that the Frobenius relation \eqref{eqn:frobenius_envelope} holds in $\zE\zU\zC$, we obtain that defining 
    $\nabla^{\Theta(F)}_{x,y}$, $\nabla_0^{\Theta(F)}$, $\Delta^{\Theta(F)}_{x,y}$ and $\Delta_0^{\Theta(F)}$ 
    using the same formulas as equations \eqref{eqn:def_frob_structure_Phi(F)} yields a Frobenius monoidal structure on $\Theta(F)$. The definition of $\Theta$ on morphisms is as before.
    
    We turn to defining the functor $\Lambda$, which requires more care. Let $G\colon \zC\to \zD$ be a Frobenius monoidal functor. At the level of objects, the functor $\Lambda(G)\colon \zE\zU\zC\to \zD$ sends a sequence of objects $\underline{x}=(x_i)_{i\in I}$ to $\otimes_i G(x_i)$. Now let $f\in \zE\zU\zC(\underline{x},\underline{y})$ be a morphism represented by a colored and labelled digraph $K$. Since $K$ contains no directed cycles, we may choose a total order $v_1 < \dots < v_n$ on vertices of $K$ for which there is no edge from $v_i$ to $v_j$ when $i < j$; we then define $\Lambda(G)(f)$ as the composite
    \begin{equation}
        \label{eqn:def_Lambda(G)f}
        \left( \id\otimes (\Delta_{\out_{v_1}}\circ G(f_{v_1}) \circ \nabla_{\inn_{v_1}}) \otimes \id \right)\circ\dots\circ 
        \left( \id\otimes (\Delta_{\out_{v_n}}\circ G(f_{v_n}) \circ \nabla_{\inn_{v_n}}) \otimes \id \right).
    \end{equation}
    One can show that this definition is independent of the choice of the ordering. 
    To prove that the expression \eqref{eqn:def_Lambda(G)f} is invariant under substitution by simply-connected digraphs, we can reduce to the case of substituting a vertex by the grafting of two corollas along a single edge, for which the result follows from the Frobenius relation \eqref{eqn:frobenius_envelope}.
    Finally, it is straightforward to verify that the assignment $\Lambda(G)$ described above indeed defines a functor $\zE\zU\zC\to \zD$ (note that, as opposed to the above properadic case, the separability assumption is not needed anymore to ensure functoriality), which moreover admits an obvious symmetric monoidal structure. We extend the assignment $\Lambda$ into a functor exactly as we did for $\Psi$ above.

    As for the properadic statement, it is easy to show that  $\Theta\circ \Lambda = \id$ and that there is a natural monoidal isomorphism $\Lambda\circ \Theta(F)\cong F$ given by the associated symmetric monoidal structure of $F$, so that $\Lambda$ and $\Theta$ are mutually inverse functors. 
    This concludes the proof of Theorem \ref{thm:frob_as_diop_map}.
\end{proof}

\subsubsection{Twisted version}
We conclude this section by introducing a slight variant of Theorem \ref{thm:frob_as_diop_map} in which cooperations are twisted by an invertible object. This extra level of generality will be important in the application of next section.

\begin{df}\label{df:TwistedDioperad}
     Given an invertible object $d$ in $\zV$ and a dioperad (or properad) $\zO$, we define its associated \emph{$d$-twisted dioperad (or properad)} $\zO\{d\}$ as having the same colors and operations given by the formula
    \begin{equation*}
        \zO\{d\}(\underline{x},\underline{y}) := \zO(\underline{x},\underline{y}) \otimes d^{\otimes (n-1)}
    \end{equation*}
    where $n$ is the cardinality of $\underline{y}$. We stress that the twist is asymmetric, as it depends on the number of outputs, and not on that of inputs. 
\end{df}

\begin{nota}
    For $n \in \mathbb{Z}$, we will use an additive notation and abbreviate $\zO\{d^{\otimes n}\}$ to $\zO\{nd\}$.
\end{nota}

In the context of symmetric monoidal $\zV$-categories, we make the following definition.
\begin{df}[Degree of morphisms]
    \label{df:degree}
    Let $\zC$ be a symmetric monoidal $\zV$-category, $x,y$ be two objects, $d$ an invertible object in $\zV$ and $n\in \bZ$. The object of \emph{degree $nd$ morphisms from $x$ to $y$} in $\zC$ is defined as $\zC(x,y)\otimes d^{\o n}$. 

    As in notation \ref{par:enriched_cats}, we will refer to a morphism $f\colon \1_\zV \to \zC(x,y)\otimes d^{\o n}$ as a degree $nd$ morphism from $x$ to $y$; moreover it will be convenient to abuse notation by writing it $f\colon x\to y$.
    Note that composition adds degrees, in the sense that given two composable morphisms of respective degrees $nd$ and $md$, with $m,n\in \bZ$, their composite is of degree $(n+m)d$.
\end{df}


\begin{df}[Twisted Frobenius monoidal structures]
    \label{df:twisted_Frob_monoidal_structure}
    Let $G\colon \zC\to \zD$ be a functor between two symmetric monoidal $\zV$-categories and let $d\in \bZ$ be an invertible object.
    A \emph{$d$-twisted colax symmetric monoidal structure} on $G$ is the data of a coassociative natural transformation $\Delta_{x,y}\colon G(x\otimes y)\to G(x)\otimes G(y)$ of degree $d$ with a morphism $\Delta \colon G(\1_\zC) \to \1_\zD$ of degree $-d$ that is a counit for $\Delta$.

    We define a \emph{$d$-twisted Frobenius monoidal structure} on $G$ to be a lax symmetric monoidal structure on $G$ together with a $d$-twisted colax symmetric monoidal structure that satisfy the  \eqref{eqn:Frobenius_relation} (in which the vertical morphisms now have degree $d$).
    Given two symmetric monoidal $\zV$-categories $\zC$ and $\zD$, $d$-twisted Frobenius monoidal functors from $\zC$ to $\zD$ together with Frobenius monoidal natural transformations (defined as in Definition \ref{df:frob_natural_transformation}) form a category denoted $\Frob^{\otimes,d}_\zV(\zC,\zD)$.
\end{df}

The proof of Theorem \ref{thm:frob_as_diop_map} generalizes easily to take into account twists, hence we have the following result.
\begin{prop}
    \label{prop:twisted_Frob_monoidal}
    Let $\zC, \zD$ be symmetric monoidal $\zV$-categories and $d\in \zV$ an invertible object. Then there is a equivalence of categories
    \[
        \Diop_\zV(\zU\zC,\zU\zD\{d\})\simeq \mathbf{Frob}^{\otimes, d}_\zV(\zC,\zD)
    \]
    between morphisms of dioperads $\zU\zC\to \zU\zD\{d\}$ and $d$-twisted Frobenius monoidal functors $\zC\to \zD$.
    A similar statement holds for morphisms of properads $\zU^p\to \zU^p\zD\{d\}$ and separable $d$-twisted Frobenius monoidal functors.
\end{prop}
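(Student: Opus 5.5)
The plan is to repeat the proof of Theorem \ref{thm:frob_as_diop_map} with degrees tracked throughout. By Corollary \ref{cor:biadjunction_envelope} we may replace $\Diop_\zV(\zU\zC,\zU\zD\{d\})$ by maps out of the envelope. However, $\zU\zD\{d\}$ is no longer of the form $\zU\zD'$ for a symmetric monoidal $\zD'$, so it is cleaner to work directly with dioperad morphisms $\phi\colon \zU\zC\to\zU\zD\{d\}$. Such a $\phi$ sends an operation $f\colon\otimes_i x_i\to\otimes_{j=1}^m y_j$ to a morphism $\otimes_i\phi(x_i)\to\otimes_j\phi(y_j)$ of degree $(m-1)d$, in the sense of Definition \ref{df:degree}. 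These degree-$(m-1)d$ components are additive under simply connected grafting along one edge. Indeed, grafting an operation with $m$ outputs and one with $m'$ outputs along a single edge produces an operation with $m+m'-1$ outputs, and $(m-1)+(m'-1)=(m+m'-1)-1$. This additivity is exactly why the twist is compatible with dioperadic, but not with arbitrary properadic, composition.

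\textbf{From dioperad maps to twisted Frobenius functors.} Given $\phi$, set $G=\phi$ on objects and on unary operations (degree $0$), so that $G$ is a $\zV$-functor. Define:
\begin{itemize}
\item $\nabla_{x,y}=\phi(\id_{x\otimes y}\in\zU\zC(x,y;x\otimes y))$, which has degree $0$;
\item $\nabla_0=\phi(\id_\1\in\zU\zC(\emptyset;\1))$, which has degree $0$;
\item $\Delta_{x,y}=\phi(\id_{x\otimes y}\in\zU\zC(x\otimes y;x,y))$, which has degree $d$;
\item $\Delta_0=\phi(\id_\1\in\zU\zC(\1;\emptyset))$, which has degree $-d$.
\end{itemize}
Associativity, coassociativity, (co)unitality, symmetry and the Frobenius relation are then identities among composites in $\zU\zC$ along simply connected graphs, exactly as in \eqref{eqn:frobenius_envelope}. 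They are preserved by $\phi$, and the degrees match on both sides by the additivity above. Naturality follows by composing with unary operations.

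\textbf{From twisted Frobenius functors to dioperad maps.} Conversely, given a $d$-twisted Frobenius structure on $G$, send $f\colon\otimes_i x_i\to\otimes_{j=1}^m y_j$ to $\Delta_{y_1,\dots,y_m}\circ G(f)\circ\nabla_{x_1,\dots,x_n}$. The iterated comultiplication $\Delta_{y_1,\dots,y_m}$ has degree $(m-1)d$ for $m\ge1$. For $m=0$ it is $\Delta_0$, of degree $-d$. In both cases the total degree is $(m-1)d$, as required. Compatibility with single-edge grafting is checked as in the construction of $\Lambda$: the operation $\nabla\circ\Delta$ on the grafted edge is removed using iterated Frobenius relations and (co)unitality. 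Frobenius monoidal natural transformations correspond to polynatural transformations, with unary components. The two constructions are mutually inverse up to the canonical isomorphisms, as before.

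\textbf{Main obstacle and the properadic variant.} The main obstacle is sign and coherence bookkeeping for the invertible object $d$. When degree-$d$ morphisms are tensored and composed, the factors $d^{\otimes k}$ must be reordered via the symmetry of $\zV$. One must check that the resulting twists agree on both sides of the Frobenius and coassociativity relations; in a graded setting these could introduce Koszul signs. I would fix a canonical ordering convention, with $d$-factors ordered by output position, build it into the definition of the composition of $\zU\zD\{d\}$, and verify that the twisted Frobenius relation as defined uses the same convention. For the properadic statement, the same argument applies with separability $\nabla\circ\Delta=\id$ in degree $d$, and functoriality is handled as in the proof of $\Psi$.
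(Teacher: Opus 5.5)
Your dioperadic argument is correct (granting the sign issue in the last paragraph), and it is more direct than the route the paper has in mind. The paper's entire proof is the remark that the proof of Theorem~\ref{thm:frob_as_diop_map} ``generalizes easily''. That proof goes through Corollary~\ref{cor:biadjunction_envelope} and the functors $\Theta,\Lambda$ on $\Cato_\zV(\zE\zU\zC,\zD)$. As you note, the corollary does not apply to $\zU\zD\{d\}$. To run the envelope argument one would have to grade the morphisms of $\zE\zU\zC$ by $\sum_v(|\out_v|-1)$, which is invariant under tree contraction, and use degree-preserving functors.

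Working directly with the biased description of dioperad maps avoids this grading. It also avoids the independence-of-ordering argument for $\Lambda$. It even gives an isomorphism of categories, since both round trips are identities. The grafting check is the same computation as for $\Lambda$, though nothing is really ``removed''. Take $f\colon(x_i)\to(y_j)\cup(z)$ and $g\colon(z)\cup(v_k)\to(w_\ell)$, and set $Y=\otimes_j y_j$, $V=\otimes_k v_k$. One application of the Frobenius relation, mirrored by symmetry, turns $(\id_{GY}\otimes\nabla_{z,V})\circ(\Delta_{Y,z}\otimes\id_{GV})$ into $\Delta_{Y,z\otimes V}\circ\nabla_{Y\otimes z,V}$. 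Naturality and (co)associativity then merge $G(f)$ and $G(g)$. (Co)unitality only enters through empty lists.

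The genuine gap is the properadic half, and your own additivity remark shows why it fails. For a connected graph with $V$ vertices and $E$ internal edges, the twists of the vertices multiply to $d^{\otimes(\sum_v|\out_v|-V)}$. The composite, however, must land in $d^{\otimes(\sum_v|\out_v|-E-1)}$. These differ by $d^{\otimes(E-V+1)}$, one factor per loop. So composing a $\Delta$-type operation with a $\nabla$-type one along two edges in $\zU^p\zD\{d\}$ would require a map $d\to\1$; for $d=k[1]$ in chain complexes the only such map is $0$. Hence $\zU^p\zD\{d\}$ is not a properad in any useful sense. Likewise, $\nabla\circ\Delta=\id$ equates a degree-$d$ morphism with a degree-$0$ one, so ``separability in degree $d$'' is not a condition that can be imposed. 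Your last paragraph therefore contradicts your first. The properadic claim only makes sense for $d\cong\1$, where it is Theorem~\ref{thm:frob_as_diop_map}(2). The paper's one-line justification passes over this too, so that half of the statement has to be dropped or reformulated rather than proved.

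Your sign worry is legitimate, and the paper does not address it either. Let $\epsilon\in\End(\1_\zV)$ be the scalar with $\sigma_{d,d}=\epsilon\cdot\id$; for example $\epsilon=-1$ for $d=k[1]$. Suppose output permutations act trivially on $d^{\otimes(m-1)}$ and composition uses the canonical isomorphisms $d^{\otimes a}\otimes d^{\otimes b}\cong d^{\otimes(a+b)}$. Then $\zO\{d\}$ violates parallel associativity as soon as $\epsilon\neq1$, already for $\zO=\mathrm{Frob}$. Grafting two one-input, two-output operations onto the two outputs of a third, in either order, gives results differing by $\epsilon$. With trivial actions, equivariance forces grafting signs to be independent of position, so the discrepancy cannot be absorbed.

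So a convention for composition alone is not enough. As for operadic suspension, output permutations must act on the twist through the symmetry of $\zV$, for instance by writing the twist as $d^{\otimes\underline y}\otimes d^{-1}$. This also makes your ``$d$-factors ordered by output position'' precise, since there are only $m-1$ of them. With such a convention, equivariance of $\phi$ gives $\sigma\circ\Delta_{x,y}=\epsilon\,\Delta_{y,x}\circ G(\sigma)$ rather than strict cocommutativity. This is exactly what the $\Delta$ of Theorem~\ref{thm:main} satisfies, because of the Koszul sign coming from $\alpha^{-1}_X\otimes\alpha^{-1}_Y$. The symmetry condition implicit in Definition~\ref{df:twisted_Frob_monoidal_structure} therefore has to be read with this sign.
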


\section{Twisted Frobenius monoidal structures from orientations}
\label{sec:orientation}
In this section, we prove that the data of an orientation on a symmetric monoidal adjunction induces a twisted Frobenius monoidal structure on the right adjoint.

\subsection{The categorical setting}
\label{sub:frob_mon}

First we fix notation and hypothesis under which we work in this section. The following definitions are inspired by \cite[Section 2.1]{PTVV}.

\begin{setting}
    \label{par:setting}
    Let $\zC, \zD$ be symmetric monoidal $\zV$-categories, together with a pair of adjoint functors
    \begin{equation*}
        \label{equ:FG_adjoint}
        \begin{tikzcd}
            F\colon \zD  & \zC\colon G,
            \arrow[""{name=0, anchor=center, inner sep=0}, shift left=2, from=1-1, to=1-2]
            \arrow[""{name=1, anchor=center, inner sep=0}, shift left=2, from=1-2, to=1-1]
            \arrow["\dashv"{anchor=center, rotate=-90}, draw=none, from=0, to=1]
        \end{tikzcd}
    \end{equation*}
    in the sense of enriched categories.
    We further assume that
    \begin{enumerate}
        \item the functor $F$ is endowed with a symmetric monoidal structure,
        \item the $\zV$-categories $\zC$ and $\zD$ are both \emph{rigid}, that is, every object is dualizable. 
    \end{enumerate}
From the rest of this section, we fix an invertible object $d$  in $\zD$. 
\end{setting}

\begin{nota}
    \label{nota:duals}
    The duals in $\zC$ and in $\zD$ will be denoted respectively $(-)^\vee$ and $(-)^*$.
\end{nota}

We recall easy consequences of the above assumptions.
First, the functor $G$ inherits a lax symmetric monoidal structure given as 
\begin{equation}
    \label{eqn:nabla}
    \nabla_{X,Y}\colon G(X\otimes Y) \stackrel{u}{\too} GF(GX\otimes GY)\cong  G(FGX\otimes FGY) \stackrel{G(c\otimes c)}{\too} GX\otimes GY.
\end{equation}
where $u\colon \id\to GF$ and $c\colon FG\to \id$ denote the unit and counit of the adjunction. The morphism $\nabla^0\colon \1_\zD\to G(\1_\zC)$ is given by the transpose of the isomorphism $F(\1_\zD)\xrightarrow{\simeq} \1_\zC$.

The second consequence of setting \ref{par:setting} is the existence of a \emph{left} adjoint $G'$ to $F$, given by the formula
\[
    G'(X) := G(X^\vee)^*.
\]

\begin{df}
    \label{df:non-deg}
    A degree $nd$ morphism $b\colon A\otimes B \to \1_\zD$ in $\zD$ is \emph{non-degenerate} if the induced morphism
    \begin{equation*}
        b^\sharp := (b\otimes\id)\circ(\id\otimes\coev )\colon A\cong A\otimes \1_\zD \too A\otimes  B \otimes B^*\too \1_\zD\otimes  B^*\cong B^*
    \end{equation*}
    of degree $nd$ is an isomorphism.
\end{df} 

\begin{df}
    \label{df:orientation}
    A \emph{$d$-orientation} on $G$ is a morphism $\xi\colon G(\1_\zC)\to \1_\zD$ of degree $-d$ in $\zD$ such that for every $X$ in $\zC$, the induced morphism  of degree $-d$
    \begin{equation*}
        \beta  \colon G(X)\otimes G(X^\vee) 
        \xrightarrow{\nabla_{X,X^\vee}}
        G(X\otimes X^\vee) 
        \xrightarrow{G(\ev_X)} G(\1_\zC) 
        \xrightarrow{\xi} \1_\zD
    \end{equation*}
    is non-degenerate (in the sense of Definition \ref{df:non-deg}). 
\end{df}
In particular, an orientation $\xi$ induces a natural isomorphism of degree $-d$ 
\[
    \alpha \colon G \stackrel{\cong}{\too} G'
\]
between the left and right adjoints to F.

\begin{df}
    \label{df:duality_context}
    A \emph{duality context of degree $d$} (or simply a \emph{$d$-duality context}) is the data of a pair of adjoint functors $F \colon \zD \rightleftharpoons\zC\colon G$ satisfying the assumptions of Setting \ref{par:setting}, together with a $d$-orientation on the right adjoint $G$.
\end{df}

The following statement is the main result of this section.

\begin{thm}
    \label{thm:main}
    Any duality context of degree $d$ induces a $d$-twisted Frobenius monoidal structure on the right adjoint $G\colon \zC\to \zD$, extending its lax symmetric monoidal structure given by \eqref{eqn:nabla}.
\end{thm}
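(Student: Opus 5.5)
The plan is to define the colax structure on $G$ directly by dualizing the lax structure \eqref{eqn:nabla} through the pairings supplied by the orientation. Each axiom then becomes an identity for $\nabla$, which holds because $\nabla$ is a lax symmetric monoidal structure. For every $X\in\zC$, write $\beta_X\colon GX\otimes G(X^\vee)\to \1_\zD$ for the non-degenerate degree $-d$ pairing of Definition \ref{df:orientation}. By non-degeneracy, $\beta_X^\sharp\colon GX\to G(X^\vee)^*$ is an invertible morphism of degree $-d$; this is the isomorphism $\alpha\colon G\cong G'$.

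\textbf{Step 1: definitions.} Fix $X,Y\in\zC$ and identify $(X\otimes Y)^\vee\cong X^\vee\otimes Y^\vee$ using the symmetric monoidal structure of $\zC$. I define $\Delta_{X,Y}\colon G(X\otimes Y)\to GX\otimes GY$ as the unique degree $d$ morphism satisfying
\[
(\beta_X\otimes\beta_Y)\circ(\Delta_{X,Y}\otimes \id_{G(X^\vee)}\otimes\id_{G(Y^\vee)}) \;=\; \beta_{X\otimes Y}\circ(\id_{G(X\otimes Y)}\otimes \nabla_{X^\vee,Y^\vee}),
\]
where braidings are suppressed as in our conventions. The left side pairs the two factors of $\Delta$ separately, and the right side has degree $-d$ while $\beta_X\otimes\beta_Y$ has degree $-2d$, so the degree count works. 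Existence and uniqueness follow from the invertibility of $\beta_X^\sharp$ and $\beta_Y^\sharp$: explicitly, $\Delta_{X,Y}=((\beta_X^\sharp)^{-1}\otimes(\beta_Y^\sharp)^{-1})\circ\nabla_{X^\vee,Y^\vee}^*\circ\beta_{X\otimes Y}^\sharp$. For the counit I take $\Delta_0:=\xi$, which has degree $-d$. Equivalently, $\Delta$ is the transport along $\alpha$ of the colax structure that $G'$ inherits as the left adjoint of the symmetric monoidal functor $F$. I prefer the pairing formulation because it makes the verifications mechanical.

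\textbf{Step 2: naturality, coassociativity, counitality, symmetry.} Every axiom has the form ``two morphisms into a tensor product $GX_1\otimes\cdots\otimes GX_k$ agree''. By non-degeneracy, such an equality can be tested after pairing with $G(X_1^\vee)\otimes\cdots\otimes G(X_k^\vee)$ through $\beta_{X_1}\otimes\cdots\otimes\beta_{X_k}$.

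\begin{itemize}
\item \emph{Coassociativity.} Applying the defining identity twice turns both sides into $\xi\circ G(\ev)\circ\nabla$ of the input against $\nabla(\nabla(a,b),c)$, respectively $\nabla(a,\nabla(b,c))$. These agree by associativity of $\nabla$, together with naturality of $\nabla$ applied to $\ev$ and to the identification $(XYZ)^\vee\cong X^\vee Y^\vee Z^\vee$.
\item \emph{Counitality.} This reduces to the unitality of $\nabla$ with respect to $\nabla^0$, plus the observation that $\beta_{\1}$ is $\xi\circ\nabla_{\1,\1}$ up to unitors.
\item \emph{Symmetry.} This reduces to the symmetry of $\nabla$.
\item \emph{Naturality in $X$ and $Y$.} This follows from dinaturality of $\beta$ in $X$: for $f\colon X\to X'$, pairing with $G(f^\vee)$ on one side matches $G(f)$ on the other. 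That dinaturality in turn follows from naturality of $\nabla$ and dinaturality of $\ev$.
\end{itemize}

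\textbf{Step 3: the Frobenius relation (main obstacle).} I must show that $\Delta_{A\otimes B,C}\circ\nabla_{A,B\otimes C}$ equals $(\nabla_{A,B}\otimes\id)\circ(\id\otimes\Delta_{B,C})$ as degree $d$ maps $GA\otimes G(BC)\to G(AB)\otimes GC$. I pair both sides with $u\in G((AB)^\vee)$ and $w\in G(C^\vee)$, applying $\beta_{AB}\otimes\beta_C$.

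\begin{itemize}
\item \emph{Left side.} By the definition of $\Delta$, it becomes $\beta_{ABC}\bigl(\nabla(a,z),\nabla(u,w)\bigr)$, for $a\in GA$ and $z\in G(BC)$.
\item \emph{Right side.} The left factor is $\beta_{AB}(\nabla(a,-),u)$. The key auxiliary identity is the ``invariance'' of the pairing:
\[
\beta_{X\otimes Y}\bigl(\nabla(x,y),v\bigr)=\xi\, G(\ev)\,\nabla(x,y,v),
\]
which holds because $\beta=\xi\circ G(\ev)\circ\nabla$ and $\nabla$ is associative. Using it, the right side can be rewritten so that the definition of $\Delta_{B,C}$ applies with $u$ partially absorbed. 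Concretely, I split $u$ along $G((AB)^\vee)\cong G(A^\vee B^\vee)$; this is not literally possible, so instead I use that $\nabla$ is associative and natural with respect to the partial evaluation $A\otimes(A\otimes B)^\vee\to B^\vee$.
\end{itemize}

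I expect this partial-evaluation bookkeeping, and checking that it is compatible with the definition of $\Delta_{B,C}$, to be the delicate part. If the direct manipulation becomes unwieldy, my fallback is to test instead against elements of the form $\nabla(a',b')\in G((AB)^\vee)$. These suffice by non-degeneracy once one shows that $\nabla_{A^\vee,B^\vee}$ composed with $\beta_{AB}^\sharp$ has dense image in the relevant sense, namely that its dual is $\Delta$, which is exactly our definition. With that reduction, both sides collapse to $\xi\circ G(\ev_{ABC})\circ\nabla$ of five arguments in two bracketings, and they agree by associativity and symmetry of $\nabla$. Once this is done, $(\nabla,\nabla^0,\Delta,\xi)$ is a $d$-twisted Frobenius monoidal structure extending \eqref{eqn:nabla}, as claimed in Theorem \ref{thm:main}.
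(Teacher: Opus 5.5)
Your setup matches the paper's. Your $\beta^\sharp$ is the paper's $\alpha$, your explicit formula is $\Delta=(\alpha^{-1}\otimes\alpha^{-1})\circ\nabla^*\circ\alpha$, and $\Delta_0=\xi$ agrees with the paper's $(\nabla_0)^*\circ\alpha_{\1}$ by unitality of $\nabla$. The pairing test and the Step~2 verifications are fine; the paper also treats them as routine.

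The gap is Step~3, which is the actual content of the theorem. You never carry out the computation, and the fallback you propose is invalid. By your own defining identity for $\Delta_{A,B}$, pairing only against $\nabla(a',b')\otimes w$ is the same as checking the relation after postcomposing with $\Delta_{A,B}\otimes\id_{GC}$. That is a valid test only when $\Delta_{A,B}$ is a monomorphism, i.e.\ when $\nabla_{A^\vee,B^\vee}$ is an epimorphism. Nothing in a duality context forces this, since no separability is assumed. In the motivating example on $S^1$, an acyclic rank-one local system $L$ gives $\nabla_{L,L^\vee}$ with source $G(L)\otimes G(L^\vee)=0$ and target $G(L\otimes L^\vee)\cong G(\1)\neq 0$. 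The remark that ``its dual is $\Delta$'' yields no density statement of this kind.

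What closes your primary route is an identity you gesture at but never state. Let $\mathrm{pev}\colon A\otimes(A\otimes B)^\vee\to B^\vee$ be the partial evaluation, so that $\ev_{A\otimes B}=\ev_B\circ(\id_B\otimes\mathrm{pev})$ up to braiding. Then
\[
\beta_{A\otimes B}\bigl(\nabla(a,y),u\bigr)=\beta_B\bigl(y,\,G(\mathrm{pev})\nabla(a,u)\bigr).
\]
Proving this needs associativity and symmetry of $\nabla$ \emph{and} naturality of $\nabla$ together with functoriality of $G$ applied to $\id_B\otimes\mathrm{pev}$. That last ingredient is exactly rule $(*)$ of Lemma~\ref{lem:graphical}; your ``invariance'' identity, which uses associativity only, is not enough.

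With this identity the argument finishes as follows. The paired right-hand side becomes $(\beta_B\otimes\beta_C)(\Delta_{B,C}(z),u',w)$ with $u'=G(\mathrm{pev})\nabla(a,u)$. By definition of $\Delta_{B,C}$ this equals $\beta_{B\otimes C}(z,\nabla(u',w))$. A second use of the same naturality gives $\xi\, G\bigl(\ev_{B\otimes C}\circ(\id\otimes\mathrm{pev}\otimes\id)\bigr)\nabla(z,a,u,w)=\xi\, G(\ev_{A\otimes B\otimes C})\nabla(a,z,u,w)$, which is your left-hand side.

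Written out, this is the paper's string-diagram computation in transposed form. Your defining identity for $\Delta$ is the (Dual.) zigzag cancellation, and the remaining steps are (Assoc.) together with two applications of $(*)$.
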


Using Theorem \ref{thm:frob_as_diop_map}, we obtain the following immediate consequence.
\begin{cor}\label{cor:main}
    Let $\zO$ be a dioperad and suppose given an $d$-orientation of $G$. Then $G$ sends $\zO$-algebras in $\zC$ to $\zO\{-d\}$-algebras in $\zD$. 
\end{cor}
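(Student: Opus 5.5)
The plan is to combine Theorem \ref{thm:main} with Proposition \ref{prop:twisted_Frob_monoidal}, and then to shift the twist from the target onto the source dioperad.

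\textbf{Step 1: from the orientation to a dioperad map.} By Theorem \ref{thm:main}, the $d$-orientation makes $G$ a $d$-twisted Frobenius monoidal functor $\zC\to\zD$. Proposition \ref{prop:twisted_Frob_monoidal} then turns this into a morphism of dioperads
\[
\widehat G\colon \zU\zC\to \zU\zD\{d\}.
\]
On colors it is $G$. An operation $f\colon \otimes_i x_i\to\otimes_j y_j$ with $m$ outputs goes to $\Delta_{y_1,\dots,y_m}\circ G(f)\circ\nabla_{x_1,\dots,x_n}$. This composite has degree $(m-1)d$, as the twist requires.

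\textbf{Step 2: post-composition.} An $\zO$-algebra in $\zC$ is a morphism $A\colon\zO\to\zU\zC$. Composing in $\Diop_\zV$ gives a morphism $\widehat G\circ A\colon \zO\to\zU\zD\{d\}$. This step is functorial in $A$, since whiskering by $\widehat G$ is a functor on hom-categories of the $2$-category $\Diop_\zV$. So it also acts on morphisms of algebras, i.e.\ on polynatural transformations.

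\textbf{Step 3: transfer the twist.} I will prove a small lemma: for every invertible $e\in\zV$ there is an isomorphism of categories
\[
\Diop_\zV(\zO,\zP\{e\})\cong\Diop_\zV(\zO\{-e\},\zP),
\]
natural in $\zO$ and $\zP$.

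On colors both sides use the same map. On operations it is the tensor–hom bijection for the invertible object $e^{\otimes(m-1)}$:
\[
\zO(\underline x,\underline y)\to\zP(\ldots)\otimes e^{\otimes(m-1)}
\quad\longleftrightarrow\quad
\zO(\underline x,\underline y)\otimes e^{\otimes(1-m)}\to\zP(\ldots),
\]
where $m=|\underline y|$. Under this bijection, units correspond to units: a unary operation has one output, so the twist $e^{\otimes 0}$ is trivial.

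For compositions, use the biased description. Graft a corolla with $m$ outputs into a corolla with $m'$ outputs along a single edge. The result has $m+m'-1$ outputs, and
\[
(m-1)+(m'-1)=(m+m'-1)-1,
\]
so the twists add up exactly. This exponent count is precisely where simple connectivity of the grafting is used. Polynatural transformations consist of unary operations, so they are untwisted and match directly.

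Applying the lemma with $\zP=\zU\zD$ and $e=d$ turns $\widehat G\circ A$ into a morphism $\zO\{-d\}\to\zU\zD$, that is, a $\zO\{-d\}$-algebra in $\zD$. Its underlying objects are the $G(A(x))$.

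\textbf{Main obstacle.} The only delicate point is the compatibility of the twisting isomorphisms with composition. When the powers of $d$ coming from two operations are combined, they must be reordered, which uses the symmetry of $\zV$ on factors of $d$. For objects like $k[d]$ with $d$ odd, this produces Koszul signs. I would fix a convention, tensoring twist factors in the order given by the grafting, and check that the identification of $\zO\{-d\}$ is the same one used in Definition \ref{df:TwistedDioperad}.

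I expect this check to go through automatically, because both $\zO\{-d\}$ and $\zU\zD\{d\}$ are defined via the same twisting construction. Twisting by $e$ and then by $-e$ is therefore canonically the identity, and this identity commutes with composition.
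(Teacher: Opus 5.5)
Your proposal is correct and follows the paper's route. Theorem \ref{thm:main}, together with the twisted form of Theorem \ref{thm:frob_as_diop_map} (Proposition \ref{prop:twisted_Frob_monoidal}), gives a dioperad map $\zU\zC\to\zU\zD\{d\}$, which you post-compose with an algebra $\zO\to\zU\zC$; the paper calls this immediate and leaves implicit the identification $\Diop_\zV(\zO,\zU\zD\{d\})\cong\Diop_\zV(\zO\{-d\},\zU\zD)$, which your Step 3 makes explicit via the exponent count $(m-1)+(m'-1)=(m+m'-1)-1$ for single-edge graftings.
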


\paragraph*{Strategy of proof of Theorem \ref{thm:main}.}
To prove Theorem \ref{thm:main}, we define a $d$-twisted colax symmetric monoidal structure $\Delta$ on $G$ using the orientation $\xi$ and the induced natural isomorphism $\alpha$. Namely, we let $\Delta$ denote the natural transformation of degree $d$
\begin{equation}
    \label{eqn:delta}
    \Delta_{X,Y}\colon 
    \begin{tikzcd}[column sep=large]
        {G(X\otimes Y)} & {G'(X\otimes Y)} & {G'X \otimes G'Y} & {GX\otimes GY}
        \arrow["{\alpha_{X\otimes Y}}", "\cong" below, from=1-1, to=1-2]
        \arrow["{\nabla^*_{Y^\vee, X^\vee}}", from=1-2, to=1-3]
        \arrow["{\alpha^{-1}_X\otimes\alpha^{-1}_Y}", "\cong" below, from=1-3, to=1-4]
    \end{tikzcd}
\end{equation} 
and $\Delta_0$ be the degree $-d$ morphism $G(\1_\zC)\xrightarrow{\alpha_\1} G'(\1_\zC) \xrightarrow{(\nabla_0)^*} \1_\zD$. 

It is straightforward to verify that $(\Delta,\Delta_0)$ defines a $d$-twisted colax symmetric monoidal structure on $G$.
The core of the proof consists in proving the  \eqref{eqn:Frobenius_relation}. To avoid drawing many commutative diagrams and make the proof more transparent, we will use a graphical calculus of string diagrams, which we introduce in Section \ref{sub:graphical_calculus}. The proof of the main theorem is then carried out in Section \ref{sub:proof_main}.

\subsection{Graphical calculus}
\label{sub:graphical_calculus}

\begin{nota}
    In the following, we use shortened notations for tensors, as explained in Section \ref{nota:tensor}. Also recall our convention \ref{nota:duals} on notations of duals. 
\end{nota}

We introduce a graphical representation of the morphisms appearing in the definitions of $\nabla$ and $\Delta$ (see their definitions \eqref{eqn:nabla} and \eqref{eqn:delta}). This graphical calculus is an ad hoc variation on the standard one of string diagrams.

To describe it, first observe that up to unitors and commutators, all such morphisms (with possibly some degree $\ell d$) are of the form 
\[
    f\colon G(X_{11} \cdots  X_{1n_1})^\star \cdots  G(X_{m1} \cdots  X_{mn_m})^\star \too
    G(Y_{11} \cdots  Y_{1p_1})^\star \cdots  G(Y_{q1} \cdots  Y_{qp_q})^\star 
\]
for some objects $X_i, Y_j\in \zD$, where $A^\star$ means either $A$ or $A^*$.
Such a morphism will be represented as a string diagram of the form
\begin{equation*}
     \begin{tikzpicture}
	\begin{pgfonlayer}{nodelayer}
		\node [style=none] (0) at (-5.75, 3.75) {};
		\node [style=none] (1) at (-5.5, 3.75) {};
		\node [style=none] (2) at (-5.25, 3.75) {};
		\node [style=none] (3) at (-5, 3.75) {};
		\node [style=none] (4) at (-4.75, 3.75) {};
		\node [style=none] (5) at (-0.75, 3.75) {};
		\node [style=none] (6) at (-0.5, 3.75) {};
		\node [style=none] (7) at (-0.25, 3.75) {};
		\node [style=none] (8) at (6.25, 3.75) {};
		\node [style=none] (9) at (6.5, 3.75) {};
		\node [style=none] (10) at (6.75, 3.75) {};
		\node [style=none] (11) at (7, 3.75) {};
		\node [style=none] (12) at (-5.5, -3.75) {};
		\node [style=none] (13) at (-5.25, -3.75) {};
		\node [style=none] (14) at (-5, -3.75) {};
		\node [style=none] (15) at (-4.75, -3.75) {};
		\node [style=none] (16) at (-1, -3.75) {};
		\node [style=none] (17) at (-0.75, -3.75) {};
		\node [style=none] (18) at (-0.5, -3.75) {};
		\node [style=none] (19) at (5.5, -3.75) {};
		\node [style=none] (20) at (5.75, -3.75) {};
		\node [style=none] (21) at (6, -3.75) {};
		\node [style=none] (22) at (6.25, -3.75) {};
		\node [style=none] (27) at (-2.5, -1) {};
		\node [style=none] (28) at (-2.25, -1) {};
		\node [style=none] (29) at (-2, -1) {};
		\node [style=none] (30) at (-1.75, -1) {};
		\node [style=none] (31) at (-0.5, -1) {};
		\node [style=none] (32) at (-0.25, -1) {};
		\node [style=none] (33) at (0, -1) {};
		\node [style=none] (34) at (2.5, -1) {};
		\node [style=none] (35) at (2.75, -1) {};
		\node [style=none] (36) at (3, -1) {};
		\node [style=none] (37) at (3.25, -1) {};
		\node [style=none] (38) at (-2.5, 1.25) {};
		\node [style=none] (39) at (-2.5, 1.25) {};
		\node [style=none] (40) at (-2.25, 1.25) {};
		\node [style=none] (41) at (-2, 1.25) {};
		\node [style=none] (42) at (-1.75, 1.25) {};
		\node [style=none] (43) at (-1.5, 1.25) {};
		\node [style=none] (44) at (-0.25, 1.25) {};
		\node [style=none] (45) at (0, 1.25) {};
		\node [style=none] (46) at (0.25, 1.25) {};
		\node [style=none] (47) at (2.75, 1.25) {};
		\node [style=none] (48) at (3, 1.25) {};
		\node [style=none] (49) at (3.25, 1.25) {};
		\node [style=none] (50) at (3.5, 1.25) {};
		\node [style=none] (51) at (0.25, -1) {};
		\node [style=none] (52) at (-0.25, -3.75) {};
		\node [style=none] (56) at (-6, 4.5) {$G(X_{11}\!\cdots\! X_{1n_1})^\star$};
		\node [style=big rectangle] (57) at (0.25, 0.25) {$f$};
		\node [style=none] (58) at (-0.25, 4.5) {$G(X_{21}\!\cdots \!X_{2n_2})^\star$};
		\node [style=none] (59) at (7.5, 4.5) {$G(X_{m1}\!\cdots \!X_{mn_m})^\star$};
		\node [style=none] (60) at (3.5, 4.5) {$\cdots$};
		\node [style=none] (61) at (-6.25, -4.5) {$G(Y_{11}\!\cdots\! Y_{1p_1})^\star$};
		\node [style=none] (62) at (7.25, -4.5) {$G(Y_{q1}\!\cdots\! Y_{qp_q})^\star$};
		\node [style=none] (63) at (-0.5, -4.5) {$G(Y_{21}\!\cdots\! Y_{2p_2})^\star$};
		\node [style=none] (64) at (3.25, -4.5) {$\cdots$};
	\end{pgfonlayer}
	\begin{pgfonlayer}{edgelayer}
		\draw [in=135, out=-45] (0.center) to (39.center);
		\draw [in=135, out=-45] (1.center) to (40.center);
		\draw [in=135, out=-45] (2.center) to (41.center);
		\draw [in=135, out=-45] (3.center) to (42.center);
		\draw [in=135, out=-45] (4.center) to (43.center);
		\draw [in=90, out=-75] (5.center) to (44.center);
		\draw [in=90, out=-75] (6.center) to (45.center);
		\draw [in=90, out=-75] (7.center) to (46.center);
		\draw [in=45, out=-135] (8.center) to (47.center);
		\draw [in=45, out=-135] (9.center) to (48.center);
		\draw [in=45, out=-135] (10.center) to (49.center);
		\draw [in=45, out=-135] (11.center) to (50.center);
		\draw [in=315, out=120] (19.center) to (34.center);
		\draw [in=315, out=120] (20.center) to (35.center);
		\draw [in=315, out=120] (21.center) to (36.center);
		\draw [in=315, out=120] (22.center) to (37.center);
		\draw [in=255, out=90] (18.center) to (33.center);
		\draw [in=255, out=90] (17.center) to (32.center);
		\draw [in=255, out=90] (16.center) to (31.center);
		\draw [in=210, out=45] (12.center) to (27.center);
		\draw [in=45, out=-150] (28.center) to (13.center);
		\draw [in=210, out=45] (14.center) to (29.center);
		\draw [in=45, out=-150] (30.center) to (15.center);
		\draw [in=255, out=90] (52.center) to (51.center);
	\end{pgfonlayer}
\end{tikzpicture}
\end{equation*}
where the map goes from top to bottom. Composition is given by vertical stacking and tensor product by horizontal juxtaposition.

\paragraph{String diagrams of the main morphisms.}
The building blocks are the following morphisms:
\begin{align*}
    \ev_{GX} &= \begin{tikzpicture}
	\begin{pgfonlayer}{nodelayer}
		\node [style=none] (0) at (-2, 1.5) {};
		\node [style=none] (1) at (-2, 0.5) {};
		\node [style=none] (4) at (0, -1) {};
		\node [style=none] (5) at (2, 1.5) {};
		\node [style=none] (6) at (2, 0.5) {};
		\node [style=none] (7) at (0, -1) {};
		\node [style=none] (8) at (-2, 2) {$G(X)$};
		\node [style=none] (9) at (2, 2) {$G(X)^*$};
		\node [style=none] (10) at (0, -1) {};
	\end{pgfonlayer}
	\begin{pgfonlayer}{edgelayer}
		\draw (0.center) to (1.center);
		\draw [in=-90, out=180] (4.center) to (1.center);
		\draw (5.center) to (6.center);
		\draw [in=-90, out=0] (7.center) to (6.center);
	\end{pgfonlayer}
\end{tikzpicture}, &
    \coev_{GX} &= \begin{tikzpicture}
	\begin{pgfonlayer}{nodelayer}
		\node [style=none] (0) at (-2, -0.75) {};
		\node [style=none] (1) at (-2, 0.25) {};
		\node [style=none] (4) at (0, 1.75) {};
		\node [style=none] (5) at (2, -0.75) {};
		\node [style=none] (6) at (2, 0.25) {};
		\node [style=none] (7) at (0, 1.75) {};
		\node [style=none] (8) at (-2, -1.25) {$G(X)^*$};
		\node [style=none] (9) at (2, -1.25) {$G(X)$};
		\node [style=none] (10) at (0, 1.75) {};
	\end{pgfonlayer}
	\begin{pgfonlayer}{edgelayer}
		\draw (0.center) to (1.center);
		\draw [in=90, out=-180] (4.center) to (1.center);
		\draw (5.center) to (6.center);
		\draw [in=90, out=0] (7.center) to (6.center);
	\end{pgfonlayer}
\end{tikzpicture}, & \\
    \nabla_{X,Y} &= \begin{tikzpicture}
	\begin{pgfonlayer}{nodelayer}
		\node [style=none] (4) at (-0.25, -0.5) {};
		\node [style=none] (6) at (-0.25, -2) {};
		\node [style=none] (8) at (-2, 2) {};
		\node [style=none] (11) at (0.25, -0.5) {};
		\node [style=none] (12) at (0.25, -2) {};
		\node [style=none] (13) at (2, 2) {};
		\node [style=none] (14) at (0, -2.5) {$G(XY)$};
		\node [style=none] (15) at (2, 2.5) {};
		\node [style=none] (16) at (2, 2.5) {$G(Y)$};
		\node [style=none] (17) at (-2, 2.5) {$G(X)$};
	\end{pgfonlayer}
	\begin{pgfonlayer}{edgelayer}
		\draw (4.center) to (6.center);
		\draw (11.center) to (12.center);
		\draw [in=90, out=-90] (13.center) to (11.center);
		\draw [in=270, out=90] (4.center) to (8.center);
	\end{pgfonlayer}
\end{tikzpicture}, & 
    G(\ev_X) &= \begin{tikzpicture}
	\begin{pgfonlayer}{nodelayer}
		\node [style=none] (4) at (-0.25, 1.25) {};
		\node [style=none] (6) at (-0.25, -0.75) {};
		\node [style=none] (11) at (0.25, 1.25) {};
		\node [style=none] (12) at (0.25, -0.75) {};
		\node [style=none] (14) at (0, 1.75) {$G(XX^\vee)$};
		\node [style=none] (16) at (0, -1) {};
		\node [style=none] (17) at (0, -1.5) {$G(\1)$};
	\end{pgfonlayer}
	\begin{pgfonlayer}{edgelayer}
		\draw (4.center) to (6.center);
		\draw (11.center) to (12.center);
		\draw [in=0, out=-90] (12.center) to (16.center);
		\draw [in=180, out=-90] (6.center) to (16.center);
	\end{pgfonlayer}
\end{tikzpicture}, &
    \xi &= \begin{tikzpicture}
	\begin{pgfonlayer}{nodelayer}
		\node [style=none] (16) at (0, 0.5) {};
		\node [style=black rectangle] (17) at (0, -0.5) {};
		\node [style=none] (20) at (0, 1) {$G(\1)$};
	\end{pgfonlayer}
	\begin{pgfonlayer}{edgelayer}
		\draw (16.center) to (17);
	\end{pgfonlayer}
\end{tikzpicture}
.
\end{align*}

Empty inputs or outputs mean the unit $\1$ in $\zD$.
Note that morphisms may carry some degree: for example, $\xi\colon G(\1)\to \1$ is of degree $-d$, while the morphism $\ev_{GX}\colon G(X)\otimes G(X)^* \to \1$ carries none (or, equivalently, is of degree $0$). 

Using these morphisms as building blocks, by composing we can obtain the morphisms $\beta$, $\alpha$, $\nabla^*$ and $\Delta$, which we write as follows:
\begin{align*}
    \beta &= \begin{tikzpicture}
	\begin{pgfonlayer}{nodelayer}
		\node [style=none] (19) at (-0.25, -0.75) {};
		\node [style=none] (21) at (0.25, -0.75) {};
		\node [style=none] (23) at (0, -1) {};
		\node [style=black rectangle] (24) at (0, -1.75) {};
		\node [style=none] (26) at (-0.25, -0.25) {};
		\node [style=none] (27) at (-2, 2.25) {};
		\node [style=none] (28) at (0.25, -0.25) {};
		\node [style=none] (29) at (2, 2.25) {};
		\node [style=none] (30) at (2, 2.75) {};
		\node [style=none] (31) at (2, 2.75) {$G(Y)$};
		\node [style=none] (32) at (-2, 2.75) {$G(X)$};
	\end{pgfonlayer}
	\begin{pgfonlayer}{edgelayer}
		\draw (23.center) to (24);
		\draw [in=0, out=-90] (21.center) to (23.center);
		\draw [in=180, out=-90] (19.center) to (23.center);
		\draw [in=90, out=-90] (29.center) to (28.center);
		\draw [in=270, out=90] (26.center) to (27.center);
		\draw (26.center) to (19.center);
		\draw (21.center) to (28.center);
	\end{pgfonlayer}
\end{tikzpicture}, &
    \alpha &= \begin{tikzpicture}
	\begin{pgfonlayer}{nodelayer}
		\node [style=none] (0) at (0, 0.25) {};
		\node [style=grey dot] (1) at (0, 0) {$\alpha$};
		\node [style=none] (2) at (0, -0.25) {};
		\node [style=none] (3) at (0, 2) {};
		\node [style=none] (4) at (0, -2) {};
		\node [style=none] (5) at (0, 2.5) {$G(X)$};
		\node [style=none] (6) at (0, -2.5) {$G'(X)$};
	\end{pgfonlayer}
	\begin{pgfonlayer}{edgelayer}
		\draw (3.center) to (0.center);
		\draw (2.center) to (4.center);
	\end{pgfonlayer}
\end{tikzpicture} 
    = \begin{tikzpicture}
	\begin{pgfonlayer}{nodelayer}
		\node [style=none] (27) at (1.25, 2) {};
		\node [style=none] (29) at (3, -2.25) {};
		\node [style=none] (30) at (3, 2) {};
		\node [style=none] (33) at (3, -2.75) {$G'(X)$};
		\node [style=none] (36) at (0, 1.5) {$G(X^\vee)$};
		\node [style=none] (37) at (-1, -1.25) {};
		\node [style=none] (38) at (-0.5, -1.25) {};
		\node [style=none] (39) at (-0.75, -1.5) {};
		\node [style=black rectangle] (40) at (-0.75, -2) {};
		\node [style=none] (42) at (-1, -0.75) {};
		\node [style=none] (43) at (-2.75, 1.75) {};
		\node [style=none] (44) at (-0.5, -0.75) {};
		\node [style=none] (45) at (1.25, 1.75) {};
		\node [style=none] (48) at (-2.75, 3) {$G(X)$};
		\node [style=none] (49) at (-2.75, 2.5) {};
	\end{pgfonlayer}
	\begin{pgfonlayer}{edgelayer}
		\draw (29.center) to (30.center);
		\draw (39.center) to (40);
		\draw [in=0, out=-90] (38.center) to (39.center);
		\draw [in=180, out=-90] (37.center) to (39.center);
		\draw [in=90, out=-90] (45.center) to (44.center);
		\draw [in=270, out=90] (42.center) to (43.center);
		\draw (42.center) to (37.center);
		\draw (38.center) to (44.center);
		\draw (27.center) to (45.center);
		\draw (49.center) to (43.center);
		\draw [in=450, out=90] (30.center) to (27.center);
	\end{pgfonlayer}
\end{tikzpicture}, \\
    \nabla^* &= \begin{tikzpicture}
	\begin{pgfonlayer}{nodelayer}
		\node [style=none] (19) at (-2.75, -0.5) {};
		\node [style=none] (20) at (-3, -0.5) {};
		\node [style=none] (21) at (0, -0.5) {};
		\node [style=none] (23) at (0.25, -0.5) {};
		\node [style=none] (24) at (-1.5, -1.5) {};
		\node [style=none] (26) at (-1.5, -1.75) {};
		\node [style=none] (27) at (0, 0) {};
		\node [style=none] (28) at (0.25, 0) {};
		\node [style=none] (31) at (1.25, 1) {};
		\node [style=none] (32) at (-0.5, 1) {};
		\node [style=none] (33) at (2.75, 1) {};
		\node [style=none] (35) at (2.75, 1) {};
		\node [style=none] (36) at (4.5, 1) {};
		\node [style=none] (38) at (2.75, -2.25) {};
		\node [style=none] (39) at (4.5, -2.25) {};
		\node [style=none] (40) at (-3, 2.75) {};
		\node [style=none] (42) at (-2.75, 2.75) {};
		\node [style=none] (43) at (2.75, -2.75) {$G(X)$};
		\node [style=none] (44) at (4.5, -2.75) {$G(Y)$};
		\node [style=none] (45) at (-3, 3.25) {$G(XY)$};
	\end{pgfonlayer}
	\begin{pgfonlayer}{edgelayer}
		\draw [in=180, out=-90] (19.center) to (24.center);
		\draw [in=270, out=0] (24.center) to (21.center);
		\draw [in=0, out=-90] (23.center) to (26.center);
		\draw [in=180, out=-90] (20.center) to (26.center);
		\draw (27.center) to (21.center);
		\draw (28.center) to (23.center);
		\draw [in=270, out=90] (28.center) to (31.center);
		\draw [in=270, out=90] (27.center) to (32.center);
		\draw [in=450, out=90] (36.center) to (32.center);
		\draw [in=90, out=90] (31.center) to (35.center);
		\draw (36.center) to (39.center);
		\draw (38.center) to (35.center);
		\draw (19.center) to (42.center);
		\draw (40.center) to (20.center);
	\end{pgfonlayer}
\end{tikzpicture}, &
    \Delta &= \begin{tikzpicture}
	\begin{pgfonlayer}{nodelayer}
		\node [style=none] (19) at (-2.75, -0.5) {};
		\node [style=none] (20) at (-3, -0.5) {};
		\node [style=none] (21) at (0, -0.5) {};
		\node [style=none] (23) at (0.25, -0.5) {};
		\node [style=none] (24) at (-1.5, -1.5) {};
		\node [style=none] (26) at (-1.5, -1.75) {};
		\node [style=none] (27) at (0, 0) {};
		\node [style=none] (28) at (0.25, 0) {};
		\node [style=none] (31) at (1.25, 1) {};
		\node [style=none] (32) at (-0.5, 1) {};
		\node [style=none] (33) at (2.75, 1) {};
		\node [style=none] (35) at (2.75, 1) {};
		\node [style=none] (36) at (4.5, 1) {};
		\node [style=none] (38) at (2.75, -1.75) {};
		\node [style=none] (39) at (4.5, -1.5) {};
		\node [style=none] (40) at (-3, 2.75) {};
		\node [style=none] (42) at (-2.75, 2.75) {};
		\node [style=none] (43) at (2.75, -4) {$G(X)$};
		\node [style=none] (44) at (4.5, -4) {$G(Y)$};
		\node [style=none] (45) at (-3, 4.75) {$G(XY)$};
		\node [style=grey dot] (46) at (-3, 3) {$\alpha$};
		\node [style=none] (47) at (-3, 3.25) {};
		\node [style=none] (48) at (-2.75, 3.25) {};
		\node [style=none] (49) at (-3, 4.25) {};
		\node [style=none] (50) at (-2.75, 4.25) {};
		\node [style=small font circle] (51) at (2.75, -2) {${\alpha^{-1}}$};
		\node [style=small font circle] (52) at (4.5, -2) {$\alpha^{-1}$};
		\node [style=none] (53) at (2.75, -2.25) {};
		\node [style=none] (54) at (2.75, -3.5) {};
		\node [style=none] (55) at (4.5, -2.5) {};
		\node [style=none] (56) at (4.5, -3.5) {};
	\end{pgfonlayer}
	\begin{pgfonlayer}{edgelayer}
		\draw [in=180, out=-90] (19.center) to (24.center);
		\draw [in=270, out=0] (24.center) to (21.center);
		\draw [in=0, out=-90] (23.center) to (26.center);
		\draw [in=180, out=-90] (20.center) to (26.center);
		\draw (27.center) to (21.center);
		\draw (28.center) to (23.center);
		\draw [in=270, out=90] (28.center) to (31.center);
		\draw [in=270, out=90] (27.center) to (32.center);
		\draw [in=450, out=90] (36.center) to (32.center);
		\draw [in=90, out=90] (31.center) to (35.center);
		\draw (36.center) to (39.center);
		\draw (38.center) to (35.center);
		\draw (19.center) to (42.center);
		\draw (40.center) to (20.center);
		\draw (49.center) to (47.center);
		\draw (50.center) to (48.center);
		\draw (53.center) to (54.center);
		\draw (55.center) to (56.center);
	\end{pgfonlayer}
\end{tikzpicture}.
\end{align*}

We now collect the rules these string diagrams satisfy, in preparation for the proof of Theorem \ref{thm:main}.

\begin{lm}[Rules for graphical calculus]
    \label{lem:graphical}
    For every $X,Y,Z\in \zC$, every $A,A',B,B'\in \zD$ and every morphism $f\colon A\to A'$, $g\colon B\to B'$, the following relations hold.
    \begin{enumerate}
        \item (Exchange law)
            \begin{equation*}
                \begin{tikzpicture}
	\begin{pgfonlayer}{nodelayer}
		\node [style=grey dot] (0) at (-1, 0.75) {$f$};
		\node [style=grey dot] (1) at (1, -0.75) {$g$};
		\node [style=none] (2) at (-1, 1.25) {};
		\node [style=none] (3) at (1, -0.25) {};
		\node [style=none] (4) at (1, 2) {};
		\node [style=none] (5) at (-1, 2) {};
		\node [style=none] (6) at (-1, 0.25) {};
		\node [style=none] (7) at (-1, -2) {};
		\node [style=none] (8) at (1, -1.25) {};
		\node [style=none] (9) at (1, -2) {};
		\node [style=none] (10) at (-1, 2.5) {$A$};
		\node [style=none] (11) at (-1, -2.5) {$A'$};
		\node [style=none] (12) at (1, 2.5) {$B$};
		\node [style=none] (13) at (1, -2.5) {$B'$};
	\end{pgfonlayer}
	\begin{pgfonlayer}{edgelayer}
		\draw (5.center) to (2.center);
		\draw (6.center) to (7.center);
		\draw (8.center) to (9.center);
		\draw (4.center) to (3.center);
	\end{pgfonlayer}
\end{tikzpicture} =
                \begin{tikzpicture}
	\begin{pgfonlayer}{nodelayer}
		\node [style=grey dot] (0) at (-1, -0.75) {$f$};
		\node [style=grey dot] (1) at (1, 0.75) {$g$};
		\node [style=none] (2) at (-1, -0.25) {};
		\node [style=none] (3) at (1, 1.25) {};
		\node [style=none] (4) at (1, 2) {};
		\node [style=none] (5) at (-1, 2) {};
		\node [style=none] (6) at (-1, -1.25) {};
		\node [style=none] (7) at (-1, -2) {};
		\node [style=none] (8) at (1, 0.25) {};
		\node [style=none] (9) at (1, -2) {};
		\node [style=none] (10) at (-1, 2.5) {$A$};
		\node [style=none] (11) at (-1, -2.5) {$A'$};
		\node [style=none] (12) at (1, 2.5) {$B$};
		\node [style=none] (13) at (1, -2.5) {$B'$};
	\end{pgfonlayer}
	\begin{pgfonlayer}{edgelayer}
		\draw (5.center) to (2.center);
		\draw (6.center) to (7.center);
		\draw (8.center) to (9.center);
		\draw (4.center) to (3.center);
	\end{pgfonlayer}
\end{tikzpicture} 
            \end{equation*}
        \item (Associativity of $\nabla$)
            \begin{equation}
                \tag{\text{Assoc.}}
                \label{tag:ass}
                \begin{tikzpicture}
	\begin{pgfonlayer}{nodelayer}
		\node [style=none] (0) at (-1.25, 0.25) {};
		\node [style=none] (1) at (-0.25, -1.75) {};
		\node [style=none] (2) at (-0.25, -2.75) {};
		\node [style=none] (3) at (-2.5, 3.25) {};
		\node [style=none] (4) at (1.25, 0.25) {};
		\node [style=none] (5) at (0.25, -1.75) {};
		\node [style=none] (6) at (0.25, -2.75) {};
		\node [style=none] (7) at (2.5, 3.25) {};
		\node [style=none] (8) at (0, -3.25) {$G(XYZ)$};
		\node [style=none] (9) at (2.5, 3.75) {};
		\node [style=none] (10) at (2.5, 3.75) {$G(Z)$};
		\node [style=none] (11) at (-2.5, 3.75) {$G(X)$};
		\node [style=none] (12) at (0, -2.75) {};
		\node [style=none] (13) at (0, -1.75) {};
		\node [style=none] (14) at (-1, 0.25) {};
		\node [style=none] (15) at (0, 3.25) {};
		\node [style=none] (16) at (-2, 1.25) {};
		\node [style=none] (17) at (0, 3.75) {$G(Y)$};
	\end{pgfonlayer}
	\begin{pgfonlayer}{edgelayer}
		\draw (1.center) to (2.center);
		\draw (5.center) to (6.center);
		\draw (12.center) to (13.center);
		\draw [in=-150, out=120] (16.center) to (15.center);
		\draw [in=90, out=-150] (4.center) to (5.center);
		\draw [in=-30, out=90] (13.center) to (14.center);
		\draw [in=330, out=90] (1.center) to (0.center);
		\draw [in=270, out=30] (4.center) to (7.center);
		\draw [in=270, out=150] (0.center) to (3.center);
		\draw [in=-60, out=150] (14.center) to (16.center);
	\end{pgfonlayer}
\end{tikzpicture} =
                \begin{tikzpicture}
	\begin{pgfonlayer}{nodelayer}
		\node [style=none] (0) at (-1.25, 0.25) {};
		\node [style=none] (1) at (-0.25, -1.75) {};
		\node [style=none] (2) at (-0.25, -2.75) {};
		\node [style=none] (3) at (-2.5, 3.25) {};
		\node [style=none] (4) at (1.25, 0.25) {};
		\node [style=none] (5) at (0.25, -1.75) {};
		\node [style=none] (6) at (0.25, -2.75) {};
		\node [style=none] (7) at (2.5, 3.25) {};
		\node [style=none] (8) at (0, -3.25) {$G(XYZ)$};
		\node [style=none] (9) at (2.5, 3.75) {};
		\node [style=none] (10) at (2.5, 3.75) {$G(Z)$};
		\node [style=none] (11) at (-2.5, 3.75) {$G(X)$};
		\node [style=none] (12) at (0, -2.75) {};
		\node [style=none] (13) at (0, -1.75) {};
		\node [style=none] (14) at (1, 0.25) {};
		\node [style=none] (15) at (0, 3.25) {};
		\node [style=none] (16) at (2, 1.25) {};
		\node [style=none] (17) at (0, 3.75) {$G(Y)$};
	\end{pgfonlayer}
	\begin{pgfonlayer}{edgelayer}
		\draw (1.center) to (2.center);
		\draw (5.center) to (6.center);
		\draw (12.center) to (13.center);
		\draw [in=-30, out=60] (16.center) to (15.center);
		\draw [in=90, out=-150] (4.center) to (5.center);
		\draw [in=-150, out=90] (13.center) to (14.center);
		\draw [in=330, out=90] (1.center) to (0.center);
		\draw [in=270, out=30] (4.center) to (7.center);
		\draw [in=270, out=150] (0.center) to (3.center);
		\draw [in=-120, out=30] (14.center) to (16.center);
	\end{pgfonlayer}
\end{tikzpicture} 
            \end{equation}
        \item (Duality)
            \begin{equation}
                \tag{\text{Dual.}}
                \label{tag:dual}
                \begin{tikzpicture}
	\begin{pgfonlayer}{nodelayer}
		\node [style=none] (0) at (0, -0.5) {};
		\node [style=none] (1) at (0, 0) {};
		\node [style=none] (5) at (2, -2) {};
		\node [style=none] (6) at (2, 0) {};
		\node [style=none] (7) at (1, 1) {};
		\node [style=none] (9) at (2, -2.5) {$G(X)$};
		\node [style=none] (11) at (-2, 1) {};
		\node [style=none] (12) at (-2, -1) {};
		\node [style=none] (13) at (-1, -2) {};
		\node [style=none] (15) at (0, -1) {};
		\node [style=none] (17) at (-2, 1.5) {$G(X)$};
	\end{pgfonlayer}
	\begin{pgfonlayer}{edgelayer}
		\draw (0.center) to (1.center);
		\draw (5.center) to (6.center);
		\draw [in=90, out=0] (7.center) to (6.center);
		\draw (11.center) to (12.center);
		\draw [in=-90, out=180] (13.center) to (12.center);
		\draw (0.center) to (15.center);
		\draw [in=90, out=-180] (7.center) to (1.center);
		\draw [in=270, out=0] (13.center) to (15.center);
	\end{pgfonlayer}
\end{tikzpicture} =
                \begin{tikzpicture}
	\begin{pgfonlayer}{nodelayer}
		\node [style=none] (5) at (0, -2) {};
		\node [style=none] (9) at (0, -2.5) {$G(X)$};
		\node [style=none] (11) at (0, 1) {};
		\node [style=none] (17) at (0, 1.5) {$G(X)$};
	\end{pgfonlayer}
	\begin{pgfonlayer}{edgelayer}
		\draw (11.center) to (5.center);
	\end{pgfonlayer}
\end{tikzpicture}
 =
                \begin{tikzpicture}
	\begin{pgfonlayer}{nodelayer}
		\node [style=none] (0) at (0, -0.5) {};
		\node [style=none] (1) at (0, 0) {};
		\node [style=none] (5) at (-2, -2) {};
		\node [style=none] (6) at (-2, 0) {};
		\node [style=none] (7) at (-1, 1) {};
		\node [style=none] (9) at (-2, -2.5) {$G(X)$};
		\node [style=none] (11) at (2, 1) {};
		\node [style=none] (12) at (2, -1) {};
		\node [style=none] (13) at (1, -2) {};
		\node [style=none] (15) at (0, -1) {};
		\node [style=none] (17) at (2, 1.5) {$G(X)$};
	\end{pgfonlayer}
	\begin{pgfonlayer}{edgelayer}
		\draw (0.center) to (1.center);
		\draw (5.center) to (6.center);
		\draw [in=90, out=180] (7.center) to (6.center);
		\draw (11.center) to (12.center);
		\draw [in=-90, out=0] (13.center) to (12.center);
		\draw (0.center) to (15.center);
		\draw [in=90, out=0] (7.center) to (1.center);
		\draw [in=-90, out=180] (13.center) to (15.center);
	\end{pgfonlayer}
\end{tikzpicture}
            \end{equation}
        \item (Naturality of $\nabla$ and functoriality of $G$)
            \begin{equation}
                \tag{$*$}
                \label{tag:star}
                \begin{tikzpicture}
	\begin{pgfonlayer}{nodelayer}
		\node [style=none] (6) at (-1, 1) {};
		\node [style=none] (7) at (-1, 2) {};
		\node [style=none] (15) at (-1.25, 2) {};
		\node [style=none] (27) at (-0.75, 1.25) {};
		\node [style=none] (32) at (-0.75, 2) {};
		\node [style=none] (65) at (-0.75, 1) {};
		\node [style=none] (74) at (-1, 0.75) {};
		\node [style=none] (75) at (-0.75, 0.75) {};
		\node [style=none] (78) at (-0.25, -1.25) {};
		\node [style=none] (79) at (-1.25, -0.5) {};
		\node [style=none] (81) at (0.25, -1.25) {};
		\node [style=none] (82) at (1.25, -0.5) {};
		\node [style=none] (84) at (-0.25, -1.5) {};
		\node [style=none] (85) at (0.25, -1.5) {};
		\node [style=none] (86) at (0, -1.75) {};
		\node [style=none] (89) at (1.25, -0.5) {};
		\node [style=none] (97) at (-1.25, 1) {};
		\node [style=none] (101) at (-1, 1.25) {};
		\node [style=none] (102) at (-1.25, 1.25) {};
		\node [style=none] (103) at (1.25, 2) {};
		\node [style=none] (104) at (-1.5, 2.5) {$G(XYY^\vee)$};
		\node [style=none] (105) at (1.75, 2.5) {$G(X^\vee)$};
		\node [style=none] (106) at (0, -2.25) {$G(\1)$};
	\end{pgfonlayer}
	\begin{pgfonlayer}{edgelayer}
		\draw [in=270, out=90] (27.center) to (32.center);
		\draw [in=270, out=-90] (74.center) to (75.center);
		\draw (65.center) to (27.center);
		\draw (75.center) to (65.center);
		\draw (74.center) to (6.center);
		\draw [in=0, out=-90] (85.center) to (86.center);
		\draw [in=180, out=-90] (84.center) to (86.center);
		\draw (85.center) to (81.center);
		\draw (78.center) to (84.center);
		\draw (97.center) to (79.center);
		\draw [in=90, out=-90] (79.center) to (78.center);
		\draw [in=270, out=90] (81.center) to (89.center);
		\draw (102.center) to (97.center);
		\draw (101.center) to (74.center);
		\draw [in=270, out=90] (101.center) to (7.center);
		\draw [in=90, out=-90] (15.center) to (102.center);
		\draw (103.center) to (89.center);
	\end{pgfonlayer}
\end{tikzpicture} =
                \begin{tikzpicture}
	\begin{pgfonlayer}{nodelayer}
		\node [style=none] (27) at (-0.75, 1) {};
		\node [style=none] (32) at (-0.75, 1.75) {};
		\node [style=none] (103) at (-1, 1.75) {};
		\node [style=none] (104) at (-1.25, 1.75) {};
		\node [style=none] (129) at (1.5, 0.5) {};
		\node [style=none] (130) at (1.5, 1.75) {};
		\node [style=none] (133) at (-1.25, 0.25) {};
		\node [style=none] (134) at (-1, 0.25) {};
		\node [style=none] (136) at (-0.75, 0.25) {};
		\node [style=none] (137) at (1.5, 0.25) {};
		\node [style=none] (138) at (0, -1) {};
		\node [style=none] (139) at (0.25, -1) {};
		\node [style=none] (140) at (0.5, -1) {};
		\node [style=none] (141) at (-0.25, -1) {};
		\node [style=none] (142) at (0, -1.5) {};
		\node [style=none] (143) at (0.25, -1.5) {};
		\node [style=none] (144) at (0.5, -1.5) {};
		\node [style=none] (145) at (-0.25, -1.5) {};
		\node [style=none] (146) at (0, -1.75) {};
		\node [style=none] (148) at (-1.25, 0.25) {};
		\node [style=none] (149) at (0.25, -1.75) {};
		\node [style=none] (150) at (-1.5, 2.25) {$G(XYY^\vee)$};
		\node [style=none] (151) at (1.75, 2.25) {$G(X^\vee)$};
		\node [style=none] (152) at (0, -2.25) {$G(\1)$};
	\end{pgfonlayer}
	\begin{pgfonlayer}{edgelayer}
		\draw [in=270, out=90] (27.center) to (32.center);
		\draw [in=90, out=-90] (130.center) to (129.center);
		\draw [in=270, out=90] (140.center) to (137.center);
		\draw [in=270, out=90] (139.center) to (136.center);
		\draw [in=270, out=90] (138.center) to (134.center);
		\draw [in=270, out=90] (141.center) to (133.center);
		\draw (140.center) to (144.center);
		\draw (139.center) to (143.center);
		\draw (142.center) to (138.center);
		\draw (141.center) to (145.center);
		\draw [in=270, out=-90] (142.center) to (143.center);
		\draw [in=270, out=180] (146.center) to (145.center);
		\draw (27.center) to (136.center);
		\draw (137.center) to (129.center);
		\draw (148.center) to (104.center);
		\draw (103.center) to (134.center);
		\draw [in=0, out=-90] (144.center) to (149.center);
		\draw (149.center) to (146.center);
	\end{pgfonlayer}
\end{tikzpicture}
            \end{equation}
    \end{enumerate}
\end{lm}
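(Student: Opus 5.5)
The plan is to translate each graphical rule back into an equation between morphisms in $\zD$, using the dictionary fixed above. Vertical stacking is composition and horizontal juxtaposition is $\otimes$. The building blocks are $\ev_{GX}$, $\coev_{GX}$, $\nabla_{X,Y}$, $G(\ev_X)$ and $\xi$. Each rule then follows from a standard structural fact about symmetric monoidal $\zV$-categories, rigid objects, or lax monoidal functors. Since unitors, associators and braidings are suppressed by Mac Lane coherence, it suffices to check each identity on the underlying composites.

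\emph{Exchange law.} Both diagrams denote $f\otimes g$, written either as $(f\otimes\id_{B'})\circ(\id_A\otimes g)$ or as $(\id_{A'}\otimes g)\circ(f\otimes\id_B)$. Their equality is bifunctoriality of $\otimes\colon\zD\otimes\zD\to\zD$, that is, the interchange law in the enriched setting; any degrees are carried by a tensor factor $d^{\otimes n}$ and are unaffected.

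\emph{Associativity of $\nabla$.} The left diagram is $\nabla_{X\otimes Y,Z}\circ(\nabla_{X,Y}\otimes\id)$ and the right one is $\nabla_{X,Y\otimes Z}\circ(\id\otimes\nabla_{Y,Z})$. These agree by the associativity axiom of the lax symmetric monoidal structure \eqref{eqn:nabla}. I would take this axiom as known, since the mate of a strong monoidal structure along an adjunction is lax monoidal; if needed, one checks it directly from the triangle identities for $u,c$ and the coherence of $F$'s monoidal structure.

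\emph{Duality.} The two diagrams are $(\id_{GX}\otimes\ev_{GX})\circ(\coev_{GX}\otimes\id_{GX})$ and its mirror image. The equalities are exactly the zigzag (snake) identities for the dual pair $(GX,(GX)^*)$ in the rigid category $\zD$. One only has to match orientation conventions with those of the pictured $\ev$ and $\coev$; the symmetry of $\zD$ makes both sides available.

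\emph{Rule \eqref{tag:star}.} This is the only step requiring real care: it is the main obstacle, mostly in reading the pictures correctly. Reading the diagrams, the left side is
\[
G(\1)\xleftarrow{G(\ev_X)}G(X\otimes X^\vee)\xleftarrow{\nabla_{X,X^\vee}} G(X)\otimes G(X^\vee)\xleftarrow{G(\id_X\otimes\ev_Y)\otimes\id} G(XYY^\vee)\otimes G(X^\vee),
\]
where the contraction of $Y,Y^\vee$ happens inside the first factor. The right side is
\[
G(\1)\xleftarrow{G(\ev_X\circ(\id_X\otimes\ev_Y\otimes\id_{X^\vee}))} G(XYY^\vee X^\vee)\xleftarrow{\nabla_{XYY^\vee,X^\vee}} G(XYY^\vee)\otimes G(X^\vee).
\]
The proof then has two steps. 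First, naturality of $\nabla$ in its first variable, applied to $\id_X\otimes\ev_Y\colon XYY^\vee\to X$, moves $G(\id\otimes\ev_Y)$ past $\nabla$ at the cost of tensoring with $\id_{X^\vee}$ inside $G$. Second, functoriality of $G$ combines $G(\ev_X)\circ G(\id\otimes\ev_Y\otimes\id)$ into a single $G$ of a composite. I will double-check that the strand ordering in the pictures matches this reading, possibly up to a braiding absorbed by symmetric coherence.
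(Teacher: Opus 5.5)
Your proposal is correct and matches the paper's proof. The paper likewise translates each rule into a standard commutative diagram, and for \eqref{tag:star} it uses exactly your square: naturality of $\nabla$ applied to $\id_X\otimes\ev_Y$, then functoriality of $G$, writing the resulting composite $\ev_X\circ(\id_X\otimes\ev_Y\otimes\id_{X^\vee})$ as $\ev_{X\otimes Y}$. One small slip: with the pictured conventions $\ev_{GX}\colon GX\otimes GX^*\to\1$ and $\coev_{GX}\colon\1\to GX^*\otimes GX$, the left duality picture is $(\ev_{GX}\otimes\id_{GX})\circ(\id_{GX}\otimes\coev_{GX})$ rather than the formula you wrote, and the mirrored picture uses the braided (co)evaluation, as you anticipated.
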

\begin{proof}
    The proof consists simply in translating commutative diagrams into identities between corresponding string diagrams. We illustrate the principle with the last assertion: it comes from commutativity of the following diagram:
    \[\begin{tikzcd}[row sep=large, column sep = huge]
        {G(X\otimes Y \otimes Y^\vee)\otimes G(X^\vee)} & {G(X\otimes Y\otimes Y^\vee\otimes X^\vee)} & \\
        {G(X)\otimes G(X^\vee)} & {G(X\otimes X^\vee)} & {G(\1)}.
        \arrow["{{\nabla_{X\o Y\o Y^\vee,X^\vee}}}", from=1-1, to=1-2]
        \arrow["{G(\id_X\o \ev_Y)\o \id_{G(X^\vee)}}" left, from=1-1, to=2-1]
        \arrow["{G(\id_X\o \ev_Y \o {\id_{X^\vee}})}"', from=1-2, to=2-2]
        \arrow["{G(\ev_{X\o Y})}", from=1-2, to=2-3]
        \arrow["{{\nabla_{X,X^\vee}}}", from=2-1, to=2-2]
        \arrow["{G(\ev_X)}"', from=2-2, to=2-3]
    \end{tikzcd}\]
    which follows by naturality of $\nabla$ and functoriality of $G$.
\end{proof}

\subsection{Proof of main theorem}
\label{sub:proof_main}
\begin{proof}[Proof of Theorem \ref{thm:main}]
    We now show that the natural transformations $\nabla$ and $\Delta$ defined in \eqref{eqn:nabla} and \eqref{eqn:delta} satisfy the  \eqref{eqn:Frobenius_relation}. The verification takes the form of a computation, where each step follows from Lemma \ref{lem:graphical}. Recall that the steps involving the exchange law or unitality relations are implicit.

    \begin{align*}
        \Delta_{XY,Z} \circ \nabla_{X,YZ} 
        &\stackrel{(\text{def.})}{=} \begin{tikzpicture}
	\begin{pgfonlayer}{nodelayer}
		\node [style=none] (0) at (-3.5, 1) {};
		\node [style=none] (1) at (-2.5, 0) {};
		\node [style=none] (2) at (-2.5, -0.75) {};
		\node [style=none] (3) at (-4.75, 2.75) {};
		\node [style=none] (4) at (-1, 1) {};
		\node [style=none] (5) at (-2, 0) {};
		\node [style=none] (6) at (-2, -0.75) {};
		\node [style=none] (7) at (0.25, 2.75) {};
		\node [style=none] (9) at (0.25, 3.25) {};
		\node [style=none] (10) at (0.25, 3.25) {$G(YZ)$};
		\node [style=none] (11) at (-4.75, 3.25) {$G(X)$};
		\node [style=none] (12) at (-2.25, -0.75) {};
		\node [style=none] (13) at (-2.25, 0) {};
		\node [style=none] (14) at (-1.25, 1) {};
		\node [style=none] (15) at (0, 2.75) {};
		\node [style=grey dot] (17) at (-2.25, -1.25) {$\alpha$};
		\node [style=none] (18) at (-2.5, -3.25) {};
		\node [style=none] (19) at (-2, -3.25) {};
		\node [style=none] (20) at (-2.25, -3.25) {};
		\node [style=none] (21) at (1, -3.25) {};
		\node [style=none] (22) at (1.5, -3.25) {};
		\node [style=none] (23) at (1.25, -3.25) {};
		\node [style=none] (24) at (-0.5, -4.25) {};
		\node [style=none] (25) at (-0.5, -4.75) {};
		\node [style=none] (26) at (-0.5, -4.5) {};
		\node [style=none] (27) at (1, -2.25) {};
		\node [style=none] (28) at (1.25, -2.25) {};
		\node [style=none] (29) at (1.5, -2.25) {};
		\node [style=none] (30) at (2.5, -1.25) {};
		\node [style=none] (31) at (2.25, -1.25) {};
		\node [style=none] (32) at (0.5, -1.25) {};
		\node [style=none] (33) at (3.75, -1.25) {};
		\node [style=none] (34) at (3.5, -1.25) {};
		\node [style=none] (35) at (3.75, -1.25) {};
		\node [style=none] (36) at (5.5, -1.25) {};
		\node [style=none] (37) at (3.5, -3) {};
		\node [style=none] (38) at (3.75, -3) {};
		\node [style=none] (39) at (5.5, -5.25) {};
		\node [style=none] (40) at (-2.25, -1.75) {};
		\node [style=none] (41) at (-2.5, -1.75) {};
		\node [style=none] (42) at (-2, -1.75) {};
		\node [style=none] (43) at (3.25, -5.75) {$G(XY)$};
		\node [style=none] (44) at (5.5, -5.75) {$G(Z)$};
		\node [style=small font circle] (45) at (3.5, -3.5) {$\alpha^{-1}$};
		\node [style=none] (46) at (3.5, -3.75) {};
		\node [style=none] (47) at (3.75, -3.75) {};
		\node [style=none] (48) at (3.5, -5.25) {};
		\node [style=none] (49) at (3.75, -5.25) {};
		\node [style=none] (50) at (5.5, -3) {};
		\node [style=small font circle] (51) at (5.5, -3.5) {$\alpha^{-1}$};
		\node [style=none] (52) at (5.5, -3.75) {};
	\end{pgfonlayer}
	\begin{pgfonlayer}{edgelayer}
		\draw (1.center) to (2.center);
		\draw (5.center) to (6.center);
		\draw (12.center) to (13.center);
		\draw [in=90, out=-150] (4.center) to (5.center);
		\draw [in=-150, out=90] (13.center) to (14.center);
		\draw [in=330, out=90] (1.center) to (0.center);
		\draw [in=270, out=30] (4.center) to (7.center);
		\draw [in=270, out=150] (0.center) to (3.center);
		\draw [in=30, out=-90] (15.center) to (14.center);
		\draw [in=180, out=-90] (19.center) to (24.center);
		\draw [in=270, out=0] (24.center) to (21.center);
		\draw [in=0, out=-90] (23.center) to (26.center);
		\draw [in=180, out=-90] (20.center) to (26.center);
		\draw [in=180, out=-90] (18.center) to (25.center);
		\draw [in=270, out=0] (25.center) to (22.center);
		\draw (27.center) to (21.center);
		\draw (28.center) to (23.center);
		\draw (22.center) to (29.center);
		\draw [in=270, out=90] (29.center) to (30.center);
		\draw [in=270, out=90] (28.center) to (31.center);
		\draw [in=270, out=90] (27.center) to (32.center);
		\draw [in=450, out=90] (36.center) to (32.center);
		\draw [in=90, out=90] (31.center) to (35.center);
		\draw [in=90, out=90] (30.center) to (34.center);
		\draw (34.center) to (37.center);
		\draw (38.center) to (35.center);
		\draw (19.center) to (42.center);
		\draw (40.center) to (20.center);
		\draw (41.center) to (18.center);
		\draw (47.center) to (49.center);
		\draw (48.center) to (46.center);
		\draw (52.center) to (39.center);
		\draw (50.center) to (36.center);
	\end{pgfonlayer}
\end{tikzpicture} &
        &\stackrel{(\text{def. } \alpha)}{=} \begin{tikzpicture}
	\begin{pgfonlayer}{nodelayer}
		\node [style=none] (0) at (-2.75, 2) {};
		\node [style=none] (1) at (-2.25, 0.25) {};
		\node [style=none] (2) at (-2.25, -1.25) {};
		\node [style=none] (3) at (-4, 3.75) {};
		\node [style=none] (4) at (-1.25, 2) {};
		\node [style=none] (5) at (-1.75, 0.25) {};
		\node [style=none] (6) at (-1.75, -1.25) {};
		\node [style=none] (7) at (0, 3.75) {};
		\node [style=none] (12) at (-2, -1.25) {};
		\node [style=none] (13) at (-2, 0.25) {};
		\node [style=none] (14) at (-1.5, 2) {};
		\node [style=none] (15) at (-0.25, 3.75) {};
		\node [style=none] (18) at (1.25, -1.75) {};
		\node [style=none] (19) at (1.75, -1.75) {};
		\node [style=none] (20) at (1.5, -1.75) {};
		\node [style=none] (21) at (2.25, -1.75) {};
		\node [style=none] (22) at (2.75, -1.75) {};
		\node [style=none] (23) at (2.5, -1.75) {};
		\node [style=none] (24) at (2, -2) {};
		\node [style=none] (25) at (2, -2.5) {};
		\node [style=none] (26) at (2, -2.25) {};
		\node [style=none] (27) at (2.25, 0.5) {};
		\node [style=none] (28) at (2.5, 0.5) {};
		\node [style=none] (29) at (2.75, 0.5) {};
		\node [style=none] (30) at (3.75, 1.5) {};
		\node [style=none] (31) at (3.5, 1.5) {};
		\node [style=none] (32) at (1.75, 1.5) {};
		\node [style=none] (33) at (5, 1.5) {};
		\node [style=none] (34) at (4.75, 1.5) {};
		\node [style=none] (35) at (5, 1.5) {};
		\node [style=none] (36) at (6.75, 1.5) {};
		\node [style=none] (37) at (4.75, -2.75) {};
		\node [style=none] (38) at (5, -2.75) {};
		\node [style=none] (39) at (6.75, -4.75) {};
		\node [style=none] (40) at (1.5, -1.25) {};
		\node [style=none] (41) at (1.25, -1.25) {};
		\node [style=none] (42) at (1.75, -1.25) {};
		\node [style=small font circle] (45) at (4.75, -3.25) {$\alpha^{-1}$};
		\node [style=none] (46) at (4.75, -3.5) {};
		\node [style=none] (47) at (5, -3.5) {};
		\node [style=none] (48) at (4.75, -4.75) {};
		\node [style=none] (49) at (5, -4.75) {};
		\node [style=none] (50) at (6.75, -2.75) {};
		\node [style=small font circle] (51) at (6.75, -3.25) {$\alpha^{-1}$};
		\node [style=none] (52) at (6.75, -3.5) {};
		\node [style=none] (53) at (0.25, -1.25) {};
		\node [style=none] (54) at (0.75, -1.25) {};
		\node [style=none] (55) at (0.5, -1.25) {};
		\node [style=none] (56) at (1.25, -1.25) {};
		\node [style=none] (57) at (1.75, -1.25) {};
		\node [style=none] (58) at (1.5, -1.25) {};
		\node [style=none] (59) at (1, -1) {};
		\node [style=none] (60) at (1, -0.5) {};
		\node [style=none] (61) at (1, -0.75) {};
		\node [style=none] (65) at (0.25, -1.25) {};
		\node [style=none] (66) at (0.75, -1.25) {};
		\node [style=none] (67) at (0.5, -1.25) {};
		\node [style=none] (68) at (-1, -2.75) {};
		\node [style=none] (69) at (-0.5, -2.75) {};
		\node [style=none] (70) at (-0.25, -2.75) {};
		\node [style=none] (71) at (0, -2.75) {};
		\node [style=none] (72) at (-1.25, -2.75) {};
		\node [style=none] (73) at (-1.5, -2.75) {};
		\node [style=none] (74) at (-1, -3.25) {};
		\node [style=none] (75) at (-0.5, -3.25) {};
		\node [style=none] (76) at (-0.25, -3.25) {};
		\node [style=none] (77) at (0, -3.25) {};
		\node [style=none] (78) at (-1.25, -3.25) {};
		\node [style=none] (79) at (-1.5, -3.25) {};
		\node [style=none] (80) at (-0.75, -3.75) {};
		\node [style=black rectangle] (81) at (-0.75, -4.25) {};
	\end{pgfonlayer}
	\begin{pgfonlayer}{edgelayer}
		\draw (1.center) to (2.center);
		\draw (5.center) to (6.center);
		\draw (12.center) to (13.center);
		\draw [in=90, out=-150] (4.center) to (5.center);
		\draw [in=-150, out=90] (13.center) to (14.center);
		\draw [in=330, out=90] (1.center) to (0.center);
		\draw [in=270, out=30] (4.center) to (7.center);
		\draw [in=270, out=150] (0.center) to (3.center);
		\draw [in=30, out=-90] (15.center) to (14.center);
		\draw [in=180, out=-90] (19.center) to (24.center);
		\draw [in=270, out=0] (24.center) to (21.center);
		\draw [in=0, out=-90] (23.center) to (26.center);
		\draw [in=180, out=-90] (20.center) to (26.center);
		\draw [in=180, out=-90] (18.center) to (25.center);
		\draw [in=270, out=0] (25.center) to (22.center);
		\draw (27.center) to (21.center);
		\draw (28.center) to (23.center);
		\draw (22.center) to (29.center);
		\draw [in=270, out=90] (29.center) to (30.center);
		\draw [in=270, out=90] (28.center) to (31.center);
		\draw [in=270, out=90] (27.center) to (32.center);
		\draw [in=450, out=90] (36.center) to (32.center);
		\draw [in=90, out=90] (31.center) to (35.center);
		\draw [in=90, out=90] (30.center) to (34.center);
		\draw (34.center) to (37.center);
		\draw (38.center) to (35.center);
		\draw (19.center) to (42.center);
		\draw (40.center) to (20.center);
		\draw (41.center) to (18.center);
		\draw (47.center) to (49.center);
		\draw (48.center) to (46.center);
		\draw (52.center) to (39.center);
		\draw (50.center) to (36.center);
		\draw [in=-180, out=90] (54.center) to (59.center);
		\draw [in=-270, out=0] (59.center) to (56.center);
		\draw [in=0, out=90] (58.center) to (61.center);
		\draw [in=-180, out=90] (55.center) to (61.center);
		\draw [in=-180, out=90] (53.center) to (60.center);
		\draw [in=-270, out=0] (60.center) to (57.center);
		\draw [in=270, out=90] (71.center) to (66.center);
		\draw [in=90, out=-90] (67.center) to (70.center);
		\draw [in=270, out=90] (69.center) to (65.center);
		\draw [in=270, out=90] (68.center) to (6.center);
		\draw [in=90, out=-90] (12.center) to (72.center);
		\draw [in=270, out=90] (73.center) to (2.center);
		\draw (71.center) to (77.center);
		\draw (76.center) to (70.center);
		\draw (69.center) to (75.center);
		\draw (74.center) to (68.center);
		\draw (73.center) to (79.center);
		\draw (78.center) to (72.center);
		\draw [in=270, out=-90] (78.center) to (76.center);
		\draw [in=270, out=-90] (74.center) to (75.center);
		\draw [in=0, out=-90] (77.center) to (80.center);
		\draw [in=270, out=180] (80.center) to (79.center);
		\draw (80.center) to (81);
	\end{pgfonlayer}
\end{tikzpicture} &\\
        &\stackrel{\eqref{tag:dual}}{=} \begin{tikzpicture}
	\begin{pgfonlayer}{nodelayer}
		\node [style=none] (0) at (-3.5, 3.25) {};
		\node [style=none] (1) at (-2.5, 1.5) {};
		\node [style=none] (2) at (-2.5, 0) {};
		\node [style=none] (3) at (-4.75, 5) {};
		\node [style=none] (4) at (-1, 3.25) {};
		\node [style=none] (5) at (-2, 1.5) {};
		\node [style=none] (6) at (-2, 0) {};
		\node [style=none] (7) at (0.25, 5) {};
		\node [style=none] (12) at (-2.25, 0) {};
		\node [style=none] (13) at (-2.25, 1.5) {};
		\node [style=none] (14) at (-1.25, 3.25) {};
		\node [style=none] (15) at (0, 5) {};
		\node [style=none] (27) at (0.5, 0.5) {};
		\node [style=none] (28) at (0.75, 0.5) {};
		\node [style=none] (29) at (1, 0.5) {};
		\node [style=none] (30) at (2, 1.5) {};
		\node [style=none] (31) at (1.75, 1.5) {};
		\node [style=none] (32) at (0, 1.5) {};
		\node [style=none] (33) at (3.25, 1.5) {};
		\node [style=none] (34) at (3, 1.5) {};
		\node [style=none] (35) at (3.25, 1.5) {};
		\node [style=none] (36) at (5, 1.5) {};
		\node [style=none] (37) at (3, -2.25) {};
		\node [style=none] (38) at (3.25, -2.25) {};
		\node [style=none] (39) at (5, -4.25) {};
		\node [style=small font circle] (45) at (3, -2.75) {$\alpha^{-1}$};
		\node [style=none] (46) at (3, -3) {};
		\node [style=none] (47) at (3.25, -3) {};
		\node [style=none] (48) at (3, -4.25) {};
		\node [style=none] (49) at (3.25, -4.25) {};
		\node [style=none] (50) at (5, -2.25) {};
		\node [style=small font circle] (51) at (5, -2.75) {$\alpha^{-1}$};
		\node [style=none] (52) at (5, -3) {};
		\node [style=none] (53) at (0.5, 0) {};
		\node [style=none] (54) at (1, 0) {};
		\node [style=none] (55) at (0.75, 0) {};
		\node [style=none] (65) at (0.5, 0) {};
		\node [style=none] (66) at (1, 0) {};
		\node [style=none] (67) at (0.75, 0) {};
		\node [style=none] (68) at (-1, -2.5) {};
		\node [style=none] (69) at (-0.5, -2.5) {};
		\node [style=none] (70) at (-0.25, -2.5) {};
		\node [style=none] (71) at (0, -2.5) {};
		\node [style=none] (72) at (-1.25, -2.5) {};
		\node [style=none] (73) at (-1.5, -2.5) {};
		\node [style=none] (74) at (-1, -3) {};
		\node [style=none] (75) at (-0.5, -3) {};
		\node [style=none] (76) at (-0.25, -3) {};
		\node [style=none] (77) at (0, -3) {};
		\node [style=none] (78) at (-1.25, -3) {};
		\node [style=none] (79) at (-1.5, -3) {};
		\node [style=none] (80) at (-0.75, -3.5) {};
		\node [style=black rectangle] (81) at (-0.75, -4) {};
	\end{pgfonlayer}
	\begin{pgfonlayer}{edgelayer}
		\draw (1.center) to (2.center);
		\draw (5.center) to (6.center);
		\draw (12.center) to (13.center);
		\draw [in=90, out=-150] (4.center) to (5.center);
		\draw [in=-150, out=90] (13.center) to (14.center);
		\draw [in=330, out=90] (1.center) to (0.center);
		\draw [in=270, out=30] (4.center) to (7.center);
		\draw [in=270, out=150] (0.center) to (3.center);
		\draw [in=30, out=-90] (15.center) to (14.center);
		\draw [in=270, out=90] (29.center) to (30.center);
		\draw [in=270, out=90] (28.center) to (31.center);
		\draw [in=270, out=90] (27.center) to (32.center);
		\draw [in=450, out=90] (36.center) to (32.center);
		\draw [in=90, out=90] (31.center) to (35.center);
		\draw [in=90, out=90] (30.center) to (34.center);
		\draw (34.center) to (37.center);
		\draw (38.center) to (35.center);
		\draw (47.center) to (49.center);
		\draw (48.center) to (46.center);
		\draw (52.center) to (39.center);
		\draw (50.center) to (36.center);
		\draw [in=270, out=90] (71.center) to (66.center);
		\draw [in=90, out=-90] (67.center) to (70.center);
		\draw [in=270, out=90] (69.center) to (65.center);
		\draw [in=270, out=90] (68.center) to (6.center);
		\draw [in=90, out=-90] (12.center) to (72.center);
		\draw [in=270, out=90] (73.center) to (2.center);
		\draw (71.center) to (77.center);
		\draw (76.center) to (70.center);
		\draw (69.center) to (75.center);
		\draw (74.center) to (68.center);
		\draw (73.center) to (79.center);
		\draw (78.center) to (72.center);
		\draw [in=270, out=-90] (78.center) to (76.center);
		\draw [in=270, out=-90] (74.center) to (75.center);
		\draw [in=0, out=-90] (77.center) to (80.center);
		\draw [in=270, out=180] (80.center) to (79.center);
		\draw (80.center) to (81);
		\draw (66.center) to (29.center);
		\draw (28.center) to (67.center);
		\draw (65.center) to (27.center);
	\end{pgfonlayer}
\end{tikzpicture} &
        &\stackrel{\eqref{tag:ass}}{=} \begin{tikzpicture}
	\begin{pgfonlayer}{nodelayer}
		\node [style=none] (0) at (-3.5, 3.25) {};
		\node [style=none] (1) at (-2.5, 1.5) {};
		\node [style=none] (2) at (-2.5, 0) {};
		\node [style=none] (3) at (-4.75, 5) {};
		\node [style=none] (4) at (-1, 3.25) {};
		\node [style=none] (5) at (-2, 1.5) {};
		\node [style=none] (6) at (-2, 0) {};
		\node [style=none] (7) at (0.25, 5) {};
		\node [style=none] (12) at (-2.25, 0) {};
		\node [style=none] (13) at (-2.25, 1.5) {};
		\node [style=none] (14) at (-1.25, 3.25) {};
		\node [style=none] (15) at (0, 5) {};
		\node [style=none] (27) at (-1.75, 0.5) {};
		\node [style=none] (30) at (2, 1.5) {};
		\node [style=none] (31) at (1.75, 1.5) {};
		\node [style=none] (32) at (0, 1.5) {};
		\node [style=none] (33) at (3.25, 1.5) {};
		\node [style=none] (34) at (3, 1.5) {};
		\node [style=none] (35) at (3.25, 1.5) {};
		\node [style=none] (36) at (5, 1.5) {};
		\node [style=none] (37) at (3, -2.25) {};
		\node [style=none] (38) at (3.25, -2.25) {};
		\node [style=none] (39) at (5, -4.25) {};
		\node [style=small font circle] (45) at (3, -2.75) {$\alpha^{-1}$};
		\node [style=none] (46) at (3, -3) {};
		\node [style=none] (47) at (3.25, -3) {};
		\node [style=none] (48) at (3, -4.25) {};
		\node [style=none] (49) at (3.25, -4.25) {};
		\node [style=none] (50) at (5, -2.25) {};
		\node [style=small font circle] (51) at (5, -2.75) {$\alpha^{-1}$};
		\node [style=none] (52) at (5, -3) {};
		\node [style=none] (54) at (2, 0) {};
		\node [style=none] (55) at (1.75, 0) {};
		\node [style=none] (65) at (-1.75, 0) {};
		\node [style=none] (66) at (2, 0) {};
		\node [style=none] (67) at (1.75, 0) {};
		\node [style=none] (68) at (-0.25, -2.5) {};
		\node [style=none] (69) at (0, -2.5) {};
		\node [style=none] (70) at (0.25, -2.5) {};
		\node [style=none] (71) at (0.5, -2.5) {};
		\node [style=none] (72) at (-0.5, -2.5) {};
		\node [style=none] (73) at (-0.75, -2.5) {};
		\node [style=none] (74) at (-0.25, -3) {};
		\node [style=none] (75) at (0, -3) {};
		\node [style=none] (76) at (0.25, -3) {};
		\node [style=none] (77) at (0.5, -3) {};
		\node [style=none] (78) at (-0.5, -3) {};
		\node [style=none] (79) at (-0.75, -3) {};
		\node [style=none] (80) at (-0.25, -3.5) {};
		\node [style=black rectangle] (81) at (-0.25, -4) {};
	\end{pgfonlayer}
	\begin{pgfonlayer}{edgelayer}
		\draw (1.center) to (2.center);
		\draw (5.center) to (6.center);
		\draw (12.center) to (13.center);
		\draw [in=90, out=-150] (4.center) to (5.center);
		\draw [in=-150, out=90] (13.center) to (14.center);
		\draw [in=330, out=90] (1.center) to (0.center);
		\draw [in=270, out=30] (4.center) to (7.center);
		\draw [in=270, out=150] (0.center) to (3.center);
		\draw [in=30, out=-90] (15.center) to (14.center);
		\draw [in=270, out=90] (27.center) to (32.center);
		\draw [in=450, out=90] (36.center) to (32.center);
		\draw [in=90, out=90] (31.center) to (35.center);
		\draw [in=90, out=90] (30.center) to (34.center);
		\draw (34.center) to (37.center);
		\draw (38.center) to (35.center);
		\draw (47.center) to (49.center);
		\draw (48.center) to (46.center);
		\draw (52.center) to (39.center);
		\draw (50.center) to (36.center);
		\draw [in=270, out=90] (71.center) to (66.center);
		\draw [in=90, out=-90] (67.center) to (70.center);
		\draw [in=270, out=90] (69.center) to (65.center);
		\draw [in=270, out=90] (68.center) to (6.center);
		\draw [in=90, out=-90] (12.center) to (72.center);
		\draw [in=270, out=90] (73.center) to (2.center);
		\draw (71.center) to (77.center);
		\draw (76.center) to (70.center);
		\draw (69.center) to (75.center);
		\draw (74.center) to (68.center);
		\draw (73.center) to (79.center);
		\draw (78.center) to (72.center);
		\draw [in=270, out=-90] (78.center) to (76.center);
		\draw [in=270, out=-90] (74.center) to (75.center);
		\draw [in=0, out=-90] (77.center) to (80.center);
		\draw [in=270, out=180] (80.center) to (79.center);
		\draw (80.center) to (81);
		\draw (65.center) to (27.center);
		\draw [in=90, out=-90] (30.center) to (66.center);
		\draw [in=270, out=90] (67.center) to (31.center);
	\end{pgfonlayer}
\end{tikzpicture} &\\
        &\stackrel{\eqref{tag:star}}{=} \begin{tikzpicture}
	\begin{pgfonlayer}{nodelayer}
		\node [style=none] (0) at (-2.5, 3.25) {};
		\node [style=none] (1) at (-1.5, 1.5) {};
		\node [style=none] (2) at (-1.5, -0.5) {};
		\node [style=none] (3) at (-3.75, 5) {};
		\node [style=none] (4) at (0, 3.25) {};
		\node [style=none] (5) at (-1, 1.5) {};
		\node [style=none] (6) at (-1, 0.25) {};
		\node [style=none] (7) at (1.25, 5) {};
		\node [style=none] (12) at (-1.25, -0.5) {};
		\node [style=none] (13) at (-1.25, 1.5) {};
		\node [style=none] (14) at (-0.25, 3.25) {};
		\node [style=none] (15) at (1, 5) {};
		\node [style=none] (27) at (-0.75, 0.5) {};
		\node [style=none] (30) at (2, -0.25) {};
		\node [style=none] (31) at (1.75, -0.25) {};
		\node [style=none] (32) at (0.5, 1.5) {};
		\node [style=none] (33) at (3.25, -0.25) {};
		\node [style=none] (34) at (3, -0.25) {};
		\node [style=none] (35) at (3.25, -0.25) {};
		\node [style=none] (36) at (4.5, 1.5) {};
		\node [style=none] (37) at (3, -2.25) {};
		\node [style=none] (38) at (3.25, -2.25) {};
		\node [style=none] (39) at (4.5, -4) {};
		\node [style=small font circle] (45) at (3, -2.75) {$\alpha^{-1}$};
		\node [style=none] (46) at (3, -3) {};
		\node [style=none] (47) at (3.25, -3) {};
		\node [style=none] (48) at (3, -4) {};
		\node [style=none] (49) at (3.25, -4) {};
		\node [style=none] (50) at (4.5, -2.25) {};
		\node [style=small font circle] (51) at (4.5, -2.75) {$\alpha^{-1}$};
		\node [style=none] (52) at (4.5, -3) {};
		\node [style=none] (54) at (2, -0.5) {};
		\node [style=none] (55) at (1.75, -0.5) {};
		\node [style=none] (65) at (-0.75, 0.25) {};
		\node [style=none] (66) at (2, -0.5) {};
		\node [style=none] (67) at (1.75, -0.5) {};
		\node [style=none] (70) at (0.5, -2.25) {};
		\node [style=none] (71) at (0.75, -2.25) {};
		\node [style=none] (72) at (0, -2.25) {};
		\node [style=none] (73) at (-0.25, -2.25) {};
		\node [style=none] (74) at (-1, 0) {};
		\node [style=none] (75) at (-0.75, 0) {};
		\node [style=none] (76) at (0.5, -2.75) {};
		\node [style=none] (77) at (0.75, -2.75) {};
		\node [style=none] (78) at (0, -2.75) {};
		\node [style=none] (79) at (-0.25, -2.75) {};
		\node [style=none] (80) at (0.25, -3.25) {};
		\node [style=black rectangle] (81) at (0.25, -3.75) {};
		\node [style=none] (83) at (0.25, -3) {};
	\end{pgfonlayer}
	\begin{pgfonlayer}{edgelayer}
		\draw (1.center) to (2.center);
		\draw (5.center) to (6.center);
		\draw (12.center) to (13.center);
		\draw [in=90, out=-150] (4.center) to (5.center);
		\draw [in=-150, out=90] (13.center) to (14.center);
		\draw [in=330, out=90] (1.center) to (0.center);
		\draw [in=270, out=30] (4.center) to (7.center);
		\draw [in=270, out=150] (0.center) to (3.center);
		\draw [in=30, out=-90] (15.center) to (14.center);
		\draw [in=270, out=90] (27.center) to (32.center);
		\draw [in=450, out=90] (36.center) to (32.center);
		\draw [in=90, out=90] (31.center) to (35.center);
		\draw [in=90, out=90] (30.center) to (34.center);
		\draw (34.center) to (37.center);
		\draw (38.center) to (35.center);
		\draw (47.center) to (49.center);
		\draw (48.center) to (46.center);
		\draw (52.center) to (39.center);
		\draw (50.center) to (36.center);
		\draw [in=270, out=90] (71.center) to (66.center);
		\draw [in=90, out=-90] (67.center) to (70.center);
		\draw [in=90, out=-90] (12.center) to (72.center);
		\draw [in=270, out=90] (73.center) to (2.center);
		\draw (71.center) to (77.center);
		\draw (76.center) to (70.center);
		\draw (73.center) to (79.center);
		\draw (78.center) to (72.center);
		\draw [in=270, out=-90] (74.center) to (75.center);
		\draw [in=0, out=-90] (77.center) to (80.center);
		\draw [in=270, out=180] (80.center) to (79.center);
		\draw (80.center) to (81);
		\draw (65.center) to (27.center);
		\draw [in=90, out=-90] (30.center) to (66.center);
		\draw [in=270, out=90] (67.center) to (31.center);
		\draw (75.center) to (65.center);
		\draw (74.center) to (6.center);
		\draw [in=270, out=0] (83.center) to (76.center);
		\draw [in=180, out=-90] (78.center) to (83.center);
	\end{pgfonlayer}
\end{tikzpicture} &
        &\stackrel{(\text{def. } \alpha)}{=} \begin{tikzpicture}
	\begin{pgfonlayer}{nodelayer}
		\node [style=none] (0) at (-2.5, 1.75) {};
		\node [style=none] (1) at (-1.5, 0) {};
		\node [style=none] (2) at (-1.5, -2.25) {};
		\node [style=none] (3) at (-3.75, 3.5) {};
		\node [style=none] (4) at (0, 1.75) {};
		\node [style=none] (5) at (-1, 0) {};
		\node [style=none] (6) at (-1, -1.25) {};
		\node [style=none] (7) at (1.25, 3.5) {};
		\node [style=none] (12) at (-1.25, -2.25) {};
		\node [style=none] (13) at (-1.25, 0) {};
		\node [style=none] (14) at (-0.25, 1.75) {};
		\node [style=none] (15) at (1, 3.5) {};
		\node [style=none] (27) at (-0.75, -1) {};
		\node [style=none] (32) at (0.5, 0) {};
		\node [style=none] (36) at (2.5, 0) {};
		\node [style=none] (39) at (2.5, -3) {};
		\node [style=none] (48) at (-1.5, -3) {};
		\node [style=none] (49) at (-1.25, -3) {};
		\node [style=none] (50) at (2.5, -1.25) {};
		\node [style=small font circle] (51) at (2.5, -1.75) {$\alpha^{-1}$};
		\node [style=none] (52) at (2.5, -2) {};
		\node [style=none] (65) at (-0.75, -1.25) {};
		\node [style=none] (74) at (-1, -1.5) {};
		\node [style=none] (75) at (-0.75, -1.5) {};
	\end{pgfonlayer}
	\begin{pgfonlayer}{edgelayer}
		\draw (1.center) to (2.center);
		\draw (5.center) to (6.center);
		\draw (12.center) to (13.center);
		\draw [in=90, out=-150] (4.center) to (5.center);
		\draw [in=-150, out=90] (13.center) to (14.center);
		\draw [in=330, out=90] (1.center) to (0.center);
		\draw [in=270, out=30] (4.center) to (7.center);
		\draw [in=270, out=150] (0.center) to (3.center);
		\draw [in=30, out=-90] (15.center) to (14.center);
		\draw [in=270, out=90] (27.center) to (32.center);
		\draw [in=450, out=90] (36.center) to (32.center);
		\draw (52.center) to (39.center);
		\draw (50.center) to (36.center);
		\draw [in=270, out=-90] (74.center) to (75.center);
		\draw (65.center) to (27.center);
		\draw (75.center) to (65.center);
		\draw (74.center) to (6.center);
		\draw (48.center) to (2.center);
		\draw (12.center) to (49.center);
	\end{pgfonlayer}
\end{tikzpicture} &\\
        &\stackrel{\eqref{tag:ass}}{=} \begin{tikzpicture}
	\begin{pgfonlayer}{nodelayer}
		\node [style=none] (1) at (-1.5, -2.75) {};
		\node [style=none] (3) at (-2.75, -1.75) {};
		\node [style=none] (5) at (-1, 0) {};
		\node [style=none] (6) at (-1, -1.25) {};
		\node [style=none] (7) at (-1, 1.75) {};
		\node [style=none] (12) at (-1.25, -2.75) {};
		\node [style=none] (13) at (-1.25, 0) {};
		\node [style=none] (15) at (-1.25, 1.75) {};
		\node [style=none] (27) at (-0.75, -1) {};
		\node [style=none] (32) at (0.5, 0) {};
		\node [style=none] (36) at (2.5, 0) {};
		\node [style=none] (39) at (2.5, -3) {};
		\node [style=none] (48) at (-1.5, -3) {};
		\node [style=none] (49) at (-1.25, -3) {};
		\node [style=none] (50) at (2.5, -1.25) {};
		\node [style=small font circle] (51) at (2.5, -1.75) {$\alpha^{-1}$};
		\node [style=none] (52) at (2.5, -2) {};
		\node [style=none] (65) at (-0.75, -1.25) {};
		\node [style=none] (74) at (-1, -1.5) {};
		\node [style=none] (75) at (-0.75, -1.5) {};
		\node [style=none] (76) at (-2.75, 1.75) {};
	\end{pgfonlayer}
	\begin{pgfonlayer}{edgelayer}
		\draw (5.center) to (6.center);
		\draw (12.center) to (13.center);
		\draw [in=270, out=90] (27.center) to (32.center);
		\draw [in=450, out=90] (36.center) to (32.center);
		\draw (52.center) to (39.center);
		\draw (50.center) to (36.center);
		\draw [in=270, out=-90] (74.center) to (75.center);
		\draw (65.center) to (27.center);
		\draw (75.center) to (65.center);
		\draw (74.center) to (6.center);
		\draw (12.center) to (49.center);
		\draw (76.center) to (3.center);
		\draw (1.center) to (48.center);
		\draw [in=90, out=-90] (3.center) to (1.center);
		\draw (15.center) to (13.center);
		\draw (5.center) to (7.center);
	\end{pgfonlayer}
\end{tikzpicture} &
        &\stackrel{(\alpha^{-1}\alpha=\id)}{=} \begin{tikzpicture}
	\begin{pgfonlayer}{nodelayer}
		\node [style=none] (1) at (-1, -3) {};
		\node [style=none] (3) at (-3.25, -1.75) {};
		\node [style=none] (6) at (-1, 1.5) {};
		\node [style=none] (7) at (-1.25, 2.75) {};
		\node [style=none] (12) at (-0.75, -3) {};
		\node [style=none] (15) at (-1.5, 2.75) {};
		\node [style=none] (27) at (-0.75, 1.75) {};
		\node [style=none] (32) at (-0.25, 2.75) {};
		\node [style=none] (36) at (3.25, 2.75) {};
		\node [style=none] (39) at (3.25, -3.75) {};
		\node [style=none] (48) at (-1, -3.75) {};
		\node [style=none] (49) at (-0.75, -3.75) {};
		\node [style=none] (50) at (3.25, -0.75) {};
		\node [style=small font circle] (51) at (3.25, -1) {$\alpha^{-1}$};
		\node [style=none] (52) at (3.25, -1.25) {};
		\node [style=none] (65) at (-0.75, 1.5) {};
		\node [style=none] (74) at (-1, 1.25) {};
		\node [style=none] (75) at (-0.75, 1.25) {};
		\node [style=none] (76) at (-3.25, 3.5) {};
		\node [style=none] (78) at (-0.5, -0.25) {};
		\node [style=none] (79) at (-1.25, 0.5) {};
		\node [style=none] (81) at (0, -0.25) {};
		\node [style=none] (82) at (0.75, 0.5) {};
		\node [style=none] (84) at (-0.5, -0.5) {};
		\node [style=none] (85) at (0, -0.5) {};
		\node [style=none] (86) at (-0.25, -0.75) {};
		\node [style=black rectangle] (87) at (-0.25, -1.25) {};
		\node [style=none] (89) at (0.75, 0.5) {};
		\node [style=none] (92) at (1.5, -1.75) {};
		\node [style=none] (93) at (1.5, 0.5) {};
		\node [style=none] (97) at (-1.25, 1.5) {};
		\node [style=none] (98) at (1.5, -0.75) {};
		\node [style=small font circle] (99) at (1.5, -1) {$\alpha^{-1}$};
		\node [style=none] (100) at (1.5, -1.25) {};
		\node [style=none] (101) at (-1, 1.75) {};
		\node [style=none] (102) at (-1.25, 1.75) {};
		\node [style=none] (103) at (-1.25, 3.5) {};
		\node [style=none] (104) at (-1.5, 3.5) {};
	\end{pgfonlayer}
	\begin{pgfonlayer}{edgelayer}
		\draw [in=270, out=90] (27.center) to (32.center);
		\draw [in=450, out=90] (36.center) to (32.center);
		\draw (52.center) to (39.center);
		\draw (50.center) to (36.center);
		\draw [in=270, out=-90] (74.center) to (75.center);
		\draw (65.center) to (27.center);
		\draw (75.center) to (65.center);
		\draw (74.center) to (6.center);
		\draw (12.center) to (49.center);
		\draw (76.center) to (3.center);
		\draw (1.center) to (48.center);
		\draw [in=90, out=-90] (3.center) to (1.center);
		\draw (86.center) to (87);
		\draw [in=0, out=-90] (85.center) to (86.center);
		\draw [in=180, out=-90] (84.center) to (86.center);
		\draw (85.center) to (81.center);
		\draw (78.center) to (84.center);
		\draw (97.center) to (79.center);
		\draw [in=450, out=90] (93.center) to (89.center);
		\draw [in=90, out=-90] (79.center) to (78.center);
		\draw [in=270, out=90] (81.center) to (89.center);
		\draw [in=90, out=-90] (92.center) to (12.center);
		\draw (93.center) to (98.center);
		\draw (92.center) to (100.center);
		\draw (102.center) to (97.center);
		\draw (101.center) to (74.center);
		\draw [in=270, out=90] (101.center) to (7.center);
		\draw [in=90, out=-90] (15.center) to (102.center);
		\draw (104.center) to (15.center);
		\draw (7.center) to (103.center);
	\end{pgfonlayer}
\end{tikzpicture} &\\
        &\stackrel{\eqref{tag:star}}{=} \begin{tikzpicture}
	\begin{pgfonlayer}{nodelayer}
		\node [style=none] (1) at (-0.5, -3.5) {};
		\node [style=none] (3) at (-3, -2) {};
		\node [style=none] (12) at (-0.25, -3.5) {};
		\node [style=none] (27) at (-0.75, 1.75) {};
		\node [style=none] (32) at (0, 2.5) {};
		\node [style=none] (36) at (4, 2.5) {};
		\node [style=none] (39) at (4, -3.75) {};
		\node [style=none] (48) at (-0.5, -3.75) {};
		\node [style=none] (49) at (-0.25, -3.75) {};
		\node [style=none] (50) at (4, -0.75) {};
		\node [style=small font circle] (51) at (4, -1) {$\alpha^{-1}$};
		\node [style=none] (52) at (4, -1.25) {};
		\node [style=none] (76) at (-3, 3.5) {};
		\node [style=none] (92) at (2.5, -2) {};
		\node [style=none] (93) at (2.5, 0.5) {};
		\node [style=none] (98) at (2.5, -0.75) {};
		\node [style=small font circle] (99) at (2.5, -1) {$\alpha^{-1}$};
		\node [style=none] (100) at (2.5, -1.25) {};
		\node [style=none] (103) at (-1, 3.5) {};
		\node [style=none] (104) at (-1.25, 3.5) {};
		\node [style=none] (129) at (1.25, 1.25) {};
		\node [style=none] (130) at (1.25, 2.5) {};
		\node [style=none] (131) at (2.5, 2.5) {};
		\node [style=none] (133) at (-1.25, 1) {};
		\node [style=none] (134) at (-1, 1) {};
		\node [style=none] (136) at (-0.75, 1) {};
		\node [style=none] (137) at (1.25, 1) {};
		\node [style=none] (138) at (0, -0.25) {};
		\node [style=none] (139) at (0.25, -0.25) {};
		\node [style=none] (140) at (0.5, -0.25) {};
		\node [style=none] (141) at (-0.25, -0.25) {};
		\node [style=none] (142) at (0, -0.75) {};
		\node [style=none] (143) at (0.25, -0.75) {};
		\node [style=none] (144) at (0.5, -0.75) {};
		\node [style=none] (145) at (-0.25, -0.75) {};
		\node [style=none] (146) at (0, -1) {};
		\node [style=black rectangle] (147) at (0.25, -1.5) {};
		\node [style=none] (148) at (-1.25, 1) {};
		\node [style=none] (149) at (0.25, -1) {};
	\end{pgfonlayer}
	\begin{pgfonlayer}{edgelayer}
		\draw [in=270, out=90] (27.center) to (32.center);
		\draw [in=450, out=90] (36.center) to (32.center);
		\draw (52.center) to (39.center);
		\draw (50.center) to (36.center);
		\draw (12.center) to (49.center);
		\draw (76.center) to (3.center);
		\draw (1.center) to (48.center);
		\draw [in=90, out=-90] (3.center) to (1.center);
		\draw [in=90, out=-90] (92.center) to (12.center);
		\draw (93.center) to (98.center);
		\draw (92.center) to (100.center);
		\draw [in=450, out=90] (131.center) to (130.center);
		\draw (93.center) to (131.center);
		\draw [in=90, out=-90] (130.center) to (129.center);
		\draw [in=270, out=90] (140.center) to (137.center);
		\draw [in=270, out=90] (139.center) to (136.center);
		\draw [in=270, out=90] (138.center) to (134.center);
		\draw [in=270, out=90] (141.center) to (133.center);
		\draw (140.center) to (144.center);
		\draw (139.center) to (143.center);
		\draw (142.center) to (138.center);
		\draw (141.center) to (145.center);
		\draw [in=270, out=-90] (142.center) to (143.center);
		\draw [in=270, out=180] (146.center) to (145.center);
		\draw (27.center) to (136.center);
		\draw (137.center) to (129.center);
		\draw (148.center) to (104.center);
		\draw (103.center) to (134.center);
		\draw [in=0, out=-90] (144.center) to (149.center);
		\draw (149.center) to (146.center);
		\draw (149.center) to (147);
	\end{pgfonlayer}
\end{tikzpicture} &
        &\stackrel{\eqref{tag:ass}}{=} \begin{tikzpicture}
	\begin{pgfonlayer}{nodelayer}
		\node [style=none] (1) at (-0.5, -3.5) {};
		\node [style=none] (3) at (-3, -2) {};
		\node [style=none] (12) at (-0.25, -3.5) {};
		\node [style=none] (27) at (0.25, 1.25) {};
		\node [style=none] (32) at (-0.5, 2.25) {};
		\node [style=none] (36) at (4.25, 2.25) {};
		\node [style=none] (39) at (4.25, -3.75) {};
		\node [style=none] (48) at (-0.5, -3.75) {};
		\node [style=none] (49) at (-0.25, -3.75) {};
		\node [style=none] (50) at (4.25, -0.75) {};
		\node [style=small font circle] (51) at (4.25, -1) {$\alpha^{-1}$};
		\node [style=none] (52) at (4.25, -1.25) {};
		\node [style=none] (76) at (-3, 3.5) {};
		\node [style=none] (92) at (2.5, -2) {};
		\node [style=none] (93) at (2.5, 0.5) {};
		\node [style=none] (98) at (2.5, -0.75) {};
		\node [style=small font circle] (99) at (2.5, -1) {$\alpha^{-1}$};
		\node [style=none] (100) at (2.5, -1.25) {};
		\node [style=none] (103) at (-1, 3.5) {};
		\node [style=none] (104) at (-1.25, 3.5) {};
		\node [style=none] (129) at (0.5, 1.25) {};
		\node [style=none] (130) at (1.25, 2.25) {};
		\node [style=none] (131) at (2.5, 2.25) {};
		\node [style=none] (133) at (-1.25, 1) {};
		\node [style=none] (134) at (-1, 1) {};
		\node [style=none] (135) at (0.25, 1) {};
		\node [style=none] (136) at (0.25, 1) {};
		\node [style=none] (137) at (0.5, 1) {};
		\node [style=none] (138) at (-0.5, -0.25) {};
		\node [style=none] (139) at (-0.25, -0.25) {};
		\node [style=none] (140) at (0, -0.25) {};
		\node [style=none] (141) at (-0.75, -0.25) {};
		\node [style=none] (142) at (-0.5, -0.75) {};
		\node [style=none] (143) at (-0.25, -0.75) {};
		\node [style=none] (144) at (0, -0.75) {};
		\node [style=none] (145) at (-0.75, -0.75) {};
		\node [style=none] (146) at (-0.5, -1) {};
		\node [style=black rectangle] (147) at (-0.25, -1.5) {};
		\node [style=none] (148) at (-1.25, 1) {};
		\node [style=none] (149) at (-0.25, -1) {};
	\end{pgfonlayer}
	\begin{pgfonlayer}{edgelayer}
		\draw [in=270, out=90] (27.center) to (32.center);
		\draw [in=450, out=90] (36.center) to (32.center);
		\draw (52.center) to (39.center);
		\draw (50.center) to (36.center);
		\draw (12.center) to (49.center);
		\draw (76.center) to (3.center);
		\draw (1.center) to (48.center);
		\draw [in=90, out=-90] (3.center) to (1.center);
		\draw [in=90, out=-90] (92.center) to (12.center);
		\draw (93.center) to (98.center);
		\draw (92.center) to (100.center);
		\draw [in=450, out=90] (131.center) to (130.center);
		\draw (93.center) to (131.center);
		\draw [in=90, out=-90] (130.center) to (129.center);
		\draw [in=270, out=90] (140.center) to (137.center);
		\draw [in=270, out=90] (139.center) to (136.center);
		\draw [in=270, out=90] (138.center) to (134.center);
		\draw [in=270, out=90] (141.center) to (133.center);
		\draw (140.center) to (144.center);
		\draw (139.center) to (143.center);
		\draw (142.center) to (138.center);
		\draw (141.center) to (145.center);
		\draw [in=270, out=-90] (142.center) to (143.center);
		\draw [in=270, out=180] (146.center) to (145.center);
		\draw (27.center) to (136.center);
		\draw (137.center) to (129.center);
		\draw (148.center) to (104.center);
		\draw (103.center) to (134.center);
		\draw [in=0, out=-90] (144.center) to (149.center);
		\draw (149.center) to (146.center);
		\draw (149.center) to (147);
	\end{pgfonlayer}
\end{tikzpicture} &\\
        &\stackrel{(\text{def. } \alpha),\eqref{tag:dual}}{=} \begin{tikzpicture}
	\begin{pgfonlayer}{nodelayer}
		\node [style=none] (1) at (-0.5, -3.5) {};
		\node [style=none] (3) at (-3, -2) {};
		\node [style=none] (12) at (-0.25, -3.5) {};
		\node [style=none] (27) at (0.25, 1.25) {};
		\node [style=none] (32) at (-0.5, 2.25) {};
		\node [style=none] (36) at (4.25, 2.25) {};
		\node [style=none] (39) at (4.25, -3.75) {};
		\node [style=none] (48) at (-0.5, -3.75) {};
		\node [style=none] (49) at (-0.25, -3.75) {};
		\node [style=none] (50) at (4.25, -0.75) {};
		\node [style=small font circle] (51) at (4.25, -1) {$\alpha^{-1}$};
		\node [style=none] (52) at (4.25, -1.25) {};
		\node [style=none] (76) at (-3, 3.5) {};
		\node [style=none] (92) at (2.5, -2) {};
		\node [style=none] (93) at (2.5, 0.5) {};
		\node [style=none] (98) at (2.5, -0.75) {};
		\node [style=small font circle] (99) at (2.5, -1) {$\alpha^{-1}$};
		\node [style=none] (100) at (2.5, -1.25) {};
		\node [style=none] (103) at (-1, 3.5) {};
		\node [style=none] (104) at (-1.25, 3.5) {};
		\node [style=none] (129) at (0.5, 1.25) {};
		\node [style=none] (130) at (1.25, 2.25) {};
		\node [style=none] (131) at (2.5, 2.25) {};
		\node [style=none] (133) at (-1.25, 1.5) {};
		\node [style=none] (134) at (-1, 1.5) {};
		\node [style=none] (135) at (0.25, 1) {};
		\node [style=none] (136) at (0.25, -0.25) {};
		\node [style=none] (137) at (0.5, -0.25) {};
		\node [style=none] (148) at (-1.25, 1.5) {};
		\node [style=grey dot] (152) at (-1.25, 1.25) {$\alpha$};
		\node [style=none] (153) at (-1.25, 1) {};
		\node [style=none] (155) at (-1, 1) {};
		\node [style=none] (156) at (-1.25, -0.25) {};
		\node [style=none] (157) at (-1, -0.25) {};
	\end{pgfonlayer}
	\begin{pgfonlayer}{edgelayer}
		\draw [in=270, out=90] (27.center) to (32.center);
		\draw [in=450, out=90] (36.center) to (32.center);
		\draw (52.center) to (39.center);
		\draw (50.center) to (36.center);
		\draw (12.center) to (49.center);
		\draw (76.center) to (3.center);
		\draw (1.center) to (48.center);
		\draw [in=90, out=-90] (3.center) to (1.center);
		\draw [in=90, out=-90] (92.center) to (12.center);
		\draw (93.center) to (98.center);
		\draw (92.center) to (100.center);
		\draw [in=450, out=90] (131.center) to (130.center);
		\draw (93.center) to (131.center);
		\draw [in=90, out=-90] (130.center) to (129.center);
		\draw (27.center) to (136.center);
		\draw (137.center) to (129.center);
		\draw (148.center) to (104.center);
		\draw (103.center) to (134.center);
		\draw (155.center) to (157.center);
		\draw (153.center) to (156.center);
		\draw [in=270, out=-90] (157.center) to (136.center);
		\draw [in=270, out=-90] (156.center) to (137.center);
	\end{pgfonlayer}
\end{tikzpicture} &
        &\stackrel{(\text{def.)}}{=} \left( \nabla_{X,Y}\otimes \id_{G(Z)} \right) \circ \left( \id_{G(X)}\otimes \Delta_{Y,Z} \right) .
    \end{align*}
\end{proof}


\section{Day convolution for dioperads}
\label{sec:day_convolution}

The goal of this section is to define and study a notion of Day convolution for dioperads, by which we mean the construction of exponential objects in the category of dioperads. 

In the literature, the classical version of Day convolution is for monoidal categories and originates in \cite{day_convolution}. There is a variant for operads established in \cite{Pisani_convolution_operads}; from the point of view of the latter result, Day's original construction can be seen as the representable case. In the setting of dioperads, a parallel story goes, the analog of Day convolution result being the main theorem of \cite{Egger_StarAutonomousFctCategories}, where monoidal categories are replaced by linear distributive categories. Our Theorem \ref{thm:day} completes this picture by establishing a non-representable counterpart of Egger's result, which the author suggested in \cite[Remark 4.4]{Egger_StarAutonomousFctCategories}\footnote{Note however that Egger suggests that a slight generalization of Theorem \ref{thm:day} should hold true, in which the source category $\zC$ is allowed to be any \emph{bilinear} linearly distributive category, of which $*$-autonomous are special cases.}.

\begin{rk}[Restriction to the unenriched setting]
    In this section, as opposed to the rest of the paper, we work with \emph{unenriched} dioperads. The results and proofs remain valid in the enriched setting \emph{mutatis mutandis}; the restriction to the unenriched situation hopefully makes the constructions and arguments more readable. Note however that our main application in Section \ref{sec:day_application} consists in recovering Theorem \ref{thm:main} by means of the Day convolution for dioperads and therefore depends on an enriched version of Theorem \ref{thm:day}; in this sense, this second argument for Theorem \ref{thm:main} does not constitute a complete proof.
\end{rk}

\subsection{Recollections on \texorpdfstring{$*$}{*}-autonomous categories}
\label{section:recollections_*-autonomous}

We start by recalling the definition of $*$-autonomous categories, which were introduced and studied in \cite{Barr-StarAutonomousCatsBook}.

\begin{df}\label{df:starAutCats}
    ($*$-autonomous categories)
    A \emph{$*$-autonomous category} is a closed symmetric monoidal category $(\zC,\otimes,[-,-])$ endowed with a \emph{dualizing object}, that is an object $K \in \zC$ such that for every $X\in \zC$ the canonical morphism 
    \[
        X \longrightarrow [[X,K],K]
    \]
    is an isomorphism.
\end{df}

Given a $*$-autonomous category $\zC$, we write $(-)^*\colon \zC^\mathrm{op}\to \zC$ for the functor defined by the formula $X^*=[X,K]$, which is an involution by assumption. Moreover, it satisfies the following property: there is a natural isomorphism
    \begin{equation}
        \label{eqn:def_*-autonomous}
        \zC(X \otimes Y , Z^* ) \cong \zC(X, (Y \otimes Z)^* )
    \end{equation}
for every three objects $X,Y,Z \in \zC$ (see \cite[Section 6]{Barr-nonsymmetric_star-autonomous} for a proof in the non-symmetric setting).

\begin{ex}
    Any rigid symmetric monoidal category defines a $*$-autonomous category, with dualizing object the monoidal unit $\1$.
\end{ex}

\begin{ex}
    There are several examples of $*$-autonomous categories of sheaves coming from geometry. To name one, the bounded derived category of constructible $\ell$-adic sheaves on a scheme of finite type over a field, endowed with its dualizing complex, is a $*$-autonomous category by \cite[Example 1.6]{Boyarchenko-Drinfeld2013}. This is an incarnation of Verdier duality, which motivates the choice of the authors to use the terminology \emph{Grothendieck--Verdier categories} for $*$-autonomous categories. 
\end{ex}

\subsubsection{\texorpdfstring{$*$}{*}-autonomous categories as linearly distributive categories with duals}
One can view $*$-autonomous categories as particular cases of symmetric \emph{linearly distributive categories}, which were originally introduced and studied under the name of \emph{weakly distributive categories} in \cite{Cockett_Seely_WeaklyDistributiveCategories} (in their non-symmetric version). 
More precisely, we have the following

\begin{construction}
    \label{const:lin_dis}
    Given  a $*$-autonomous category $(\zC,\otimes, *)$, we can define a second symmetric monoidal structure $\parr$ on $\zC$ by the formula
    \begin{equation*}
        X \parr Y := (X^* \otimes Y^*)^*
    \end{equation*}
    with unit $1^*$.
    There is a natural transformation
    \begin{align*}
        \label{eqn:deltas_lin_distr}
        \delta\colon & X\otimes (Y\parr Z) \to (X\otimes Y) \parr Z
    \end{align*}
    making $(\zC,\otimes,1,\parr,1^*,\delta)$ into a symmetric linearly distributive category; we refer to \cite[Proposition 4.11.]{Fuchs-Schaumann-Schweigert_Grothendieck-Verdier_duality_bimodules} for the precise definition of $\delta$ and a proof of this statement. 
\end{construction}

In turn, we can identify $*$-autonomous as those symmetric linearly distributive categories for which every object has a dual, in the appropriate sense (see \cite[Theorem 2.41]{Demirdilek-Schweigert_Frobenius}); in particular, for every object $X$ in a $*$-autonomous category $\zC$, there are  evaluation and coevaluation morphisms of the form $\ev \colon X\otimes X^*\to 1^*$ and $\coev \colon 1 \to X^*\parr X$. Using this perspective, the isomorphism \eqref{eqn:def_*-autonomous}  can be translated into a natural bijection
\begin{equation*}
    (-)^\sharp \colon \zC(X\otimes Y, Z) \cong \zC(X,Y^*\parr Z) \colon (-)^\flat.
\end{equation*}

\begin{rk}
    Given $f\colon X\otimes Y \to Z$,  the corresponding morphism $f^\sharp \colon X\to Y^*\parr Z$ is given by the composition
    \[
        X 
        \xrightarrow{\coev \otimes \id} 
        (Y^*\parr Y) \otimes X 
        \stackrel{\delta}{\longrightarrow}
        Y^*\parr (Y\otimes X)
        \xrightarrow{\id \parr f}
        Y^*\parr Z .
    \]
    Conversely, the inverse construction sends a morphism $g\colon A\to B\parr C$ to the composite
    \[
        g^\flat\colon 
        A\otimes B^*
        \xrightarrow[]{g\otimes \id}
        (B\parr C)\otimes B^*
        \cong
        (C\parr B)\otimes B^*
        \xrightarrow[]{\delta}
        C\parr (B\otimes B^*)
        \xrightarrow[]{\id \parr \ev}
        C.
    \]
\end{rk}

\subsubsection{Symmetric linearly distributive categories and underlying dioperads}\label{sec:UndelyingDioperad}
In \cite[Section 2.2]{Cockett_Seely_WeaklyDistributiveCategories}, the authors define a functor from linearly distributive categories to non-symmetric dioperads. In our symmetric setting, there is an analogous construction, which we now recall.

\begin{construction}
    Given a symmetric linearly distributive category $\zC$, its \emph{underlying dioperad}, denoted $\overline{\zU}\zC$, has colors given by the objects of $\zC$ and operations given by the formula
    \begin{equation*}
        \overline{\zU}\zC((X_i); (Y_j)) := \zC\left( \otimes_i X_i, \parr_j Y_j\right).
    \end{equation*}
    The dioperadic composition is given as follows: given operations $\alpha \colon (X_i) \to (Y_j) \cup (Z)$ and $\beta \colon (Z)\cup (V_k) \to (W_\ell)$, their composite  $\beta\circ_Z\alpha \colon (X_i)\cup(V_k)\to (Y_j)\cup(W_\ell)$ in $\overline{\zU}\zC$ is defined as the following composition in $\zC$:
    \[
        \otimes X_i \otimes V_k
        \xrightarrow[]{\alpha\otimes \id}
        (\parr Y_j \parr Z) \otimes V_k
        \xrightarrow[]{\delta}
        \parr Y_j \parr (Z \otimes V_k) 
        \xrightarrow{\id\parr \beta}
        \parr Y_j \parr W_\ell.
    \]
\end{construction}

Given a $*$-autonomous category $\zC$, its \emph{underlying dioperad} $\overline{\zU}\zC$ is defined as the underlying dioperad of the symmetric linearly distributive category associated to $\zC$ by Construction \ref{const:lin_dis}.

\subsection{Day convolution}
The following result has been conjectured in \cite[Remark 4.4]{Egger_StarAutonomousFctCategories}.

\begin{thm}
    \label{thm:day}
    Let $\zC$ be a $*$-autonomous category and $\zD$ be a dioperad. Then there exists a dioperad $\Fun(\zC,\zD)$ with the following universal property: for every dioperad $\zO$, there is an equivalence of categories
    \begin{equation*}
        \Diop(\zO,\Fun(\zC,\zD)) \simeq 
        \Diop(\zO\times \overline{\zU}\zC, \zD).
    \end{equation*}
\end{thm}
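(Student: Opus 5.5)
We construct $\Fun(\zC,\zD)$ explicitly, by analogy with Day convolution for operads (Pisani), using the linearly distributive structure $(\otimes,\parr,\delta)$ of Construction \ref{const:lin_dis}. Its colors are the morphisms of dioperads $\phi\colon \overline{\zU}\zC\to\zD$; equivalently, these are the ``unary'' data we want to capture when $\zO$ is the terminal dioperad on one color with only the identity operation. The $\times$ on the right-hand side is the cartesian product of dioperads. Its colors are pairs, and an operation of biprofile $((o_i,c_i);(o'_j,c'_j))$ is a pair consisting of an operation of $\zO$ and an operation of $\overline{\zU}\zC$ of the corresponding biprofiles, composed componentwise.

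Given colors $\phi_1,\dots,\phi_n$ and $\psi_1,\dots,\psi_m$, we let $\Fun(\zC,\zD)(\underline\phi;\underline\psi)$ be the set of families
\[
\theta_{\underline X,\underline Y,f}\in \zD\bigl(\phi_1X_1,\dots,\phi_nX_n;\psi_1Y_1,\dots,\psi_mY_m\bigr),
\]
indexed by objects $X_i,Y_j\in\zC$ and morphisms $f\colon \otimes_i X_i\to \parr_j Y_j$ in $\zC$, that is, by operations of $\overline{\zU}\zC$. These families must satisfy a dinaturality or ``polynaturality'' condition. For every operation $g$ of $\overline{\zU}\zC$ composable with $f$ along one color, and for every way of factoring an operation through a one-edge grafting, we require
\[
\phi_k(g)\circ_{X} \theta_{\dots,f}=\theta_{\dots,f\circ_X g}
\]
on the input side, together with its dual on the output side. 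Composition $\theta'\circ_{\psi}\theta$ along a shared color $\psi=\psi_j=\phi'_k$ is defined componentwise: $(\theta'\circ\theta)_{h}$ is obtained by decomposing $h\colon \otimes X\otimes \otimes V\to \parr Y\parr \parr W$ as a dioperadic composite $g\circ_Z f$ in $\overline{\zU}\zC$ and setting it equal to $\theta'_g\circ_{\psi Z}\theta_f$.

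The crucial input is that every operation of $\overline{\zU}\zC$ with the appropriate split profile factors canonically through such a grafting. Using the bijection $(-)^\sharp,(-)^\flat$ and the duals $\ev,\coev$, we take $Z=\parr_{j} Y_j^{\text{rest}}$-type objects (or $Z=(\otimes V)^*\parr(\parr W)$) with $f$ given by $h^\sharp$ and $g$ given by an evaluation. The polynaturality condition then makes the result independent of the chosen factorization, since any two factorizations are connected by a zigzag of unary operations. This is where $*$-autonomy, and not merely linear distributivity, is used: it is the replacement for the coend formula $\int^{Z}$ in classical Day convolution. Identities are the families $\theta_f=\phi(f)$.

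With this definition, we then carry out three verifications in order. First, we check that the composition is well defined, associative, unital and equivariant. Equivariance is inherited from the symmetry of $\otimes$ and $\parr$. Associativity follows by using independence of factorization together with the coherence of $\delta$ (the linear distributivity pentagons).

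Second, we construct the comparison functors. Given $\Xi\colon\zO\times\overline{\zU}\zC\to\zD$, we send a color $o$ to $\Xi(o,-)$, which is a color of $\Fun(\zC,\zD)$ because $\overline{\zU}\zC\to\zO\times\overline{\zU}\zC$, $c\mapsto(o,c)$, is a dioperad map for the identity operations. We send an operation $p$ of $\zO$ to the family $\theta_f=\Xi(p,f)$. Polynaturality of this family is precisely functoriality of $\Xi$ on composites of the form $(p,f)\circ(\id,g)$ and $(\id,g)\circ(p,f)$. Conversely, given $\Theta\colon\zO\to\Fun(\zC,\zD)$, we set $\Xi(o,c)=\Theta(o)(c)$ and $\Xi(p,f)=\Theta(p)_f$. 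Compatibility with general composites $(p',f')\circ(p,f)$ follows from the definition of composition in $\Fun(\zC,\zD)$, via the factorization $f'\circ f$ with $Z$ the grafted color. Polynatural transformations on both sides correspond to families $\Xi(o,c)\to\Xi'(o,c)$, which are unary in both variables. The two constructions are mutually inverse on the nose, so we in fact obtain an isomorphism of categories.

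Third, we check naturality in $\zO$, which is immediate from the construction.

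The main obstacle is the well-definedness of composition in $\Fun(\zC,\zD)$. One must show that any two ways of writing an operation $h$ of $\overline{\zU}\zC$ as a single-edge grafting $g\circ_Z f$ give the same value $\theta'_g\circ\theta_f$ after imposing polynaturality. In particular, one has to see that the set of factorizations is connected, which we would attempt using the canonical factorization through $Z=(\otimes V)^*\parr(\parr W)$, or equivalently through $Z=(\otimes X)\otimes(\parr Y)^*$. We would first prove that every factorization maps to this canonical one by a unary operation $Z\to Z_{\mathrm{can}}$ obtained from $(-)^\sharp$, and then use the triangle identities for $\ev,\coev$ in the linearly distributive setting to verify the required equalities.
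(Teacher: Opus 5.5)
\textbf{Gap: the colors and the naturality condition of your $\Fun(\zC,\zD)$ are wrong.} In $\zO\times\baru\zC$ an operation is a pair of operations of the \emph{same} biprofile. So $(\id_o,g)$ is an operation only when $g$ is unary, and $c\mapsto(o,c)$ is a map of dioperads $\baru\zC\to\zO\times\baru\zC$ only if $\zO$ has operations $(o,\dots,o)\to(o,\dots,o)$ in every biprofile where $\baru\zC$ does. Consequently, for $\Xi\colon\zO\times\baru\zC\to\zD$, the assignment $\Xi(o,-)$ is only a functor between underlying categories, not a dioperad map $\baru\zC\to\zD$. Your polynaturality condition uses $\phi_k(g)$ for non-unary $g$, so it is not even meaningful for the family $f\mapsto\Xi(p,f)$; only its unary instances come from composites $(p,f)\circ(\id,g)$. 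The comparison functor $\Diop(\zO\times\baru\zC,\zD)\to\Diop(\zO,\Fun(\zC,\zD))$ is therefore not defined.

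Your object in fact fails the universal property. Take $\zO=\zD$ to be the dioperad with one color and only its identity operation, and $\zC$ nonempty. Then $\Diop(\zO\times\baru\zC,\zD)$ is the category of functors from the underlying category of $\zC$ to the terminal category, which has exactly one object. But your $\Fun(\zC,\zD)$ has no colors: a dioperad map $\baru\zC\to\zD$ would have to send the binary operation $\id_{x\otimes y}\in\baru\zC(x,y;x\otimes y)$ somewhere. Note also that this $\zO$ is not terminal. The terminal dioperad is $\mathrm{Frob}$, for which $\zO\times\baru\zC\cong\baru\zC$. Dioperad maps $\baru\zC\to\zD$ therefore correspond to $\mathrm{Frob}$-algebras in $\Fun(\zC,\zD)$, not to colors; this is exactly how the paper uses the theorem for $d$-duality contexts.

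\textbf{The fix is the paper's definition.} Colors of $\Fun(\zC,\zD)$ are functors between underlying categories. An operation $(F_i)\to(G_j)$ is a family of operations $(F_ia_i)\to(G_jb_j)$ in $\zD$, indexed by morphisms $\otimes a_i\to\parr b_j$ in $\zC$, and natural only with respect to morphisms $a_i\to a_i'$ and $b_j\to b_j'$ of $\zC$ acting through the $F_i,G_j$. The paper writes this as an equalizer over factorizations $\otimes a_i\xrightarrow{f}x\xrightarrow{g}\parr b_j$ that are dinatural in $x$, which is equivalent to indexing by $g\circ f$.

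With this correction the rest of your plan is essentially the paper's argument, and unary naturality suffices for your independence-of-factorization step. Put $Z_0=(\otimes X)\otimes(\parr Y)^*$ and let $f_0$ be the canonical map $\otimes X\to\parr Y\parr Z_0$. Any factorization $h=g\circ_Z f$ is related to the canonical one by the single unary map $f^\flat\colon Z_0\to Z$, which gives $\theta'_g\circ_{\psi Z}\theta_f=\theta'_g\circ_{\psi Z}\psi(f^\flat)\circ_{\psi Z_0}\theta_{f_0}=\theta'_{g_0}\circ_{\psi Z_0}\theta_{f_0}$. Here $g_0=g\circ(f^\flat\otimes\id)$ is the transpose of $h$, so it depends only on $h$. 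The paper instead fixes one explicit factorization, through $x'=(c^*\parr x)\otimes b^*$ and $x''=c^*\parr(x\otimes b^*)$ glued by $L(\delta)$. It then derives the equivalence from the fact that evaluation $\mathsf{ev}\colon\Fun(\zC,\zD)\times\baru\zC\to\zD$ is a map of dioperads.
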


The rest of the section is devoted to the proof of this theorem.

\subsubsection{Construction of the dioperad \texorpdfstring{$\Fun(\zC,\zD)$}{Fun(C,D)}.}

Let us define explicitly the dioperad $\Fun(\zC,\zD)$. Its colors are functors $\zC\to \zD$ between the categories underlying $\zC$ and $\zD$. Given functors $F_1,\dots, F_n, G_1,\dots, G_m \in \Fun(\zC,\zD)$, we define the set of operations as follows: consider the two maps
$s,t$
\begin{equation}
    \label{eqn:def_equalizer}
    \begin{tikzcd}
        \prod\limits_{x\in \zC} \:\lim\limits_{\substack{{a_1\otimes \dots \otimes a_n \to x}\\ {x \to b_1\parr \dots \parr b_m}}} \zD(F_1a_1, \dots, F_na_n; G_1b_1, \dots, G_mb_m) \\
        \prod\limits_{y\to z\in \zC} \:\lim\limits_{\substack{{a_1\otimes \dots \otimes a_n \to y}\\ {z \to b_1\parr \dots \parr b_m}}} \zD(F_1a_1, \dots, F_na_n; G_1b_1, \dots, G_mb_m)
        \arrow[shift right=3, from=1-1, to=2-1, "s" left]
        \arrow[shift left=3, from=1-1, to=2-1, "t"]
    \end{tikzcd}
\end{equation}
induced by taking source and target. 

\begin{df}
    \label{df:operations_FunCD}
    An element in the domain of $s$ and $t$ in \eqref{eqn:def_equalizer} is called a \emph{natural operation} if it lies in their equalizer. We define the set $\Fun(\zC,\zD)(F_1,\dots, F_n; G_1,\dots, G_m)$ as the equalizer $\mathrm{eq}(s,t)$.
\end{df}

\begin{rk}
    For further reference, we make explicit the following property of natural operations in $\Fun(\zC,\zD)$, which follows from the definition of the limits involved in formula \eqref{eqn:def_equalizer}. Given a natural operation $\alpha\in \Fun(\zC,\zD)((F_i);(G_j))$ and morphisms $\otimes a_i \xrightarrow{f} x \xrightarrow{g} \parr b_j \xrightarrow{\parr u_j}\parr b'_j$ where $u_j\colon b_j\to b'_j$, we have the following equality
    \begin{equation}
        \label{eqn:rewriting_lim}
        \alpha_{f, \parr u_j \circ g} 
        = 
        (Lu_j) \circ_{(Lb_j)} 
        \alpha_{f,g},
    \end{equation}
    and similarly for morphisms $a_i\to a'_i$. 
\end{rk}

\paragraph{Description of the composition in $\Fun(\zC,\zD)$.}
Let us now describe how the composition is defined. Given operations $\alpha\colon (F_i)\to (G_j)\cup (L)$ and $\beta\colon (L) \cup (H_k)\to (K_\ell)$, we construct an element $$\beta\circ_L\alpha \colon (F_i) \cup (H_k) \to (G_j) \cup (K_\ell)$$ as follows.
Consider an object $x\in \zC$ as well as families of objects $a_i, b_j, c_k, d_\ell \in \zC$ indexed as the operations above, set $a = \otimes_i a_i$, $c = \otimes_k c_k$, $b = \parr_j b_j$ and $d = \parr_\ell d_\ell$. Let $a \otimes c \stackrel{f}{\to} x \stackrel{g}{\to} b \parr d$ be two morphisms. We now define the component of $\beta\circ_L \alpha $ corresponding to this data. 

Let $x' = (c^*\parr x) \otimes b^*$. By duality, the morphism $f$ induces a morphism $f^\sharp \colon a\to c^*\parr x$. 
Taking the component of $\alpha$ associated to $f^\sharp$ and $\delta \circ (\id \otimes \coev) \colon c^*\parr x \to (c^*\parr x)\otimes (b^*\parr b) \to x'\parr b$ yields an operation 
\[ 
    \alpha_{f^\sharp, \delta \circ (\id \otimes \coev)} 
    \colon (F_i(a_i))\to (G_j(b_j))\cup (L(x')).
\]
Similarly, we can set $x'' := c^*\parr (x\otimes b^*)$. By duality $g$ induces a morphism $g^\flat \colon x\otimes b^*\to d$; then the component of $\beta$ associated to $g^\flat$ and to $(\ev\otimes \id)\circ \delta \colon c\otimes x'' \to (c\otimes c^*)\parr (x\otimes b^*)\to x\otimes b^*$ yields an operation
\[ 
    \beta_{g^\flat, (\ev\otimes \id)\circ \delta} \colon
    (H_k(c_k)) \cup (L(x'')) \to (K_\ell(d_\ell)).
\]
Finally, using the natural transformation $\delta \colon x'\to x''$, we define the desired component of $\beta\circ_L \alpha$  as the following composition in $\zD$~:
\begin{equation}
    \label{eqn:def_composition}
    \beta_{g^\flat, (\ev\otimes \id)\circ \delta} 
    \circ_{L(x')}
    L(\delta)
    \circ_{L(x'')}
    \alpha_{f^\sharp, \delta \circ (\id \otimes \coev)}
    \colon 
    (F_i(a_i)) \to (K_\ell(d_\ell)).
\end{equation}

It is a lengthy but straightforward exercise to show that the composition defined above is well-defined, associative and unital with respect to the identity natural transformation.

\subsubsection{Universal property of \texorpdfstring{$\Fun(\zC,\zD)$}{Fun(C,D)}}

\paragraph{The evaluation morphism. }
We construct a morphism of dioperads
\begin{equation*}
    \mathsf{ev} \colon \Fun(\zC,\zD)\times \overline{\zU}\zC\to \zD.
\end{equation*}

At the level of underlying categories, this is simply the usual evaluation functor $(F,x)\mapsto F(x)$.
At the level of operations, given sequences of objects $(F_i,a_i)$ and $ (G_j,b_j)$ in $\Fun(\zC,\zD)\times \overline{\zU}\zC$, we define the desired map
\begin{equation}
    \label{eqn:def_eval}
    \Fun(\zC,\zD)((F_i);(G_j)) \times \zC(\otimes a_i;\parr b_j)\longrightarrow
    \zD((F_ia_i); (G_jb_j)).
\end{equation}
as the projection to the component given by $x= \otimes_i a_i$ in the product \eqref{eqn:def_equalizer} (or to the component $x=\parr_j b_j$, which is equivalent by naturality).

\begin{lm}
    \label{lm:ev_is_map_diop}
    The assignment \eqref{eqn:def_eval} defines a morphism of dioperads.
\end{lm}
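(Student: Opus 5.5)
To show that $\mathsf{ev}$ is a morphism of dioperads, I will check compatibility with identities and with the biased compositions $\circ_Z$ along a single color. This suffices by the biased description of dioperads recalled earlier. Before that, I must check that \eqref{eqn:def_eval} is well defined, i.e.\ that projecting to the component $x=\otimes a_i$ (paired with $f=\id$, $g$ the given morphism) agrees with projecting to $x=\parr b_j$ (paired with $f$ the given morphism, $g=\id$). This is exactly the equalizer condition of Definition \ref{df:operations_FunCD} applied to the morphism $g\colon \otimes a_i\to \parr b_j$ of $\zC$. I will use freely whichever of the two descriptions is convenient.

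\textbf{Identities.} In the product $\Fun(\zC,\zD)\times\overline{\zU}\zC$, the identity of $(F,a)$ is the pair consisting of the identity natural operation and $\id_a$. The identity natural operation has component $F(u)$ at $u\colon a\to x\to b$. Evaluated at $u=\id_a$, this gives $\id_{Fa}$, as required.

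\textbf{Composition.} Take operations $(\alpha,\varphi)$ and $(\beta,\psi)$, where $\varphi\colon \otimes a_i\to (\parr b_j)\parr z$ and $\psi\colon z\otimes(\otimes c_k)\to \parr d_\ell$ lie in $\overline{\zU}\zC$. Their composite in $\overline{\zU}\zC$ is
\[
\chi=(\id\parr\psi)\circ\delta\circ(\varphi\otimes\id).
\]
I need the component of $\beta\circ_L\alpha$ at $f=\id_{a\otimes c}$, $g=\chi$ to equal $\beta_{\id,\psi}\circ_{L(z)}\alpha_{\id,\varphi}$. By \eqref{eqn:def_composition}, the left-hand side is
\[
\beta_{g^\flat,(\ev\otimes\id)\circ\delta}\circ L(\delta)\circ\alpha_{f^\sharp,\delta\circ(\id\otimes\coev)}.
\]
The strategy is to use the rewriting rule \eqref{eqn:rewriting_lim} (naturality in the $L$-variable) to transport both factors to the intermediate object $z$.

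\begin{itemize}
\item Consider the morphism $x'\to z$ obtained from $\varphi$ via duality. The target-side component $\alpha_{f^\sharp,\delta\circ(\id\otimes\coev)}$, postcomposed with $L$ of this morphism, equals $\alpha_{\id,\varphi}$ up to the naturality of the source variable.
\item Symmetrically, consider the morphism $z\to x''$ built from $\psi$. Using naturality of $\beta$ in its input $L$-variable, the component $\beta_{g^\flat,\dots}$ precomposed with $L$ of this morphism becomes $\beta_{\id,\psi}$.
\end{itemize}

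It then remains to show an identity in $\zC$: the composite $x'\xrightarrow{\delta}x''$ factors as $x'\to z\to x''$, or more precisely, the required instance of naturality holds. This follows from the triangle (zig-zag) identities for $\ev,\coev$ in the linearly distributive category, the naturality and coherence of $\delta$, and the formulas for $(-)^\sharp,(-)^\flat$.

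\textbf{Main obstacle.} The hardest step is this last one. Matching the specific choices $x'=(c^*\parr x)\otimes b^*$ and $x''=c^*\parr(x\otimes b^*)$ against $z$ requires applying the equalizer condition to non-obvious morphisms of $\zC$. It also requires a diagram chase combining several linear-distributivity coherences. I would first treat the case of no $c_k$ and no $b_j$, where $x'=x''=x$ and the statement is immediate. I would then add the dualities one side at a time, using string-diagram calculus for linearly distributive categories (proof nets) to verify the resulting equalities.
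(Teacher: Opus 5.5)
Your well-definedness and unit checks are fine. Your overall plan for composition is also the right one: evaluate $\beta\circ_L\alpha$ at a convenient factorization, transport both components using the equalizer condition and \eqref{eqn:rewriting_lim}, and reduce to an identity in $\zC$. The gap is in the mechanism of your two bullets, because the morphisms $x'\to z$ and $z\to x''$ you posit do not exist in general. With $f=\id_{a\otimes c}$ one has $x'=(c^*\parr ac)\otimes b^*$ and $x''=c^*\parr(ac\otimes b^*)$. Write $\alpha_{x'}$ and $\beta_{x''}$ for the two components in \eqref{eqn:def_composition}.
\begin{itemize}
\item For $L(u)\circ\alpha_{x'}=\alpha_{\id,\varphi}$ to follow from the rewriting rule, you need $u\circ(f^\sharp\otimes\id_{b^*})=\varphi^\flat$. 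Take no $b_j$, $z=a$ and $\varphi=\id_a$: this says $f^\sharp\colon a\to c^*\parr(a\otimes c)\cong[c,a\otimes c]$ has a retraction. That fails in general, e.g.\ for $c=0$ in finite-dimensional vector spaces.
\item Dually, $\beta_{x''}\circ L(v)=\beta_{\id,\psi}$ for some $v\colon z\to x''$ would require $\psi=\chi^\flat\circ w$, where $w\colon z\otimes c\to a\otimes c\otimes b^*$ corresponds to $v$. Since $\chi^\flat=\psi\circ(\varphi^\flat\otimes\id_c)$ up to symmetry, this is impossible in vector spaces when $\varphi=0\neq\psi$.
\item The identity you reduce to is itself false. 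In vector spaces $\delta\colon x'\to x''$ is an isomorphism, while one may take $z=0$ with $a,b,c\neq 0$, so $\delta$ cannot factor through $z$.
\end{itemize}

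The missing idea is that the two $\alpha$-components (at $x'$ and at $z$) must be related through a third object that maps to both. Likewise, the two $\beta$-components must be related through an object receiving maps from both $x''$ and $z$.
\begin{itemize}
\item With your factorization, take $P=a\otimes b^*$, with $f^\sharp\otimes\id_{b^*}\colon P\to x'$ and $\varphi^\flat\colon P\to z$. Both $\alpha$-components are then $L(-)\circ\alpha_P$, where $\alpha_P$ is the component at $a\xrightarrow{\delta\circ(\id\otimes\coev_b)}(a\otimes b^*)\parr b$.
\item Take $Q=c^*\parr(z\otimes c)$, with $x''\to Q$ induced by $\varphi^\flat$ (using the formula for $\chi^\flat$) and $z\to Q$ the unit $\delta\circ(\coev_c\otimes\id)$. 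Both $\beta$-components are then $\beta_Q\circ L(-)$; the one at $z$ follows by a zig-zag identity.
\item What remains is a commuting square: $P\to x'\xrightarrow{\delta}x''\to Q$ equals $P\xrightarrow{\varphi^\flat}z\to Q$. This follows from naturality of $\delta$ and its compatibility with associativity. It is not a factorization of $\delta$ through $z$.
\end{itemize}
This is exactly the structure of the paper's proof, written with $x$ for your $z$. The paper evaluates instead at the factorization $ac\to(b\parr x)c\to b\parr d$, uses the pivots $(b\parr x)\otimes b^*$ and $c^*\parr(x\otimes c)$, and concludes with the square \eqref{eqn:lm_ev_last_diag}.
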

We provide details for the proof of this result, as it illustrates well the typical arguments involved in the rest of the proof of Theorem \ref{thm:day}.

\begin{proof}[Proof of Lemma \ref{lm:ev_is_map_diop}]
    Consider operations $f\colon (a_i)\to (b_j)\cup(x)$ and $g\colon (c_k)\cup(x)\to (d_\ell)$ in $ \overline{\zU}\zC$, as well as 
    $\alpha\colon (F_i)\to (G_j)\cup(L)$ and $\beta\colon (L)\cup(H_k)\to (K_\ell)$ in $\Fun(\zC,\zD)$. We want to show that
    \begin{equation}
        \label{eqn:equality_ev}
        \mathsf{ev}(\beta\circ_L \alpha, g\circ_x f) 
        = \mathsf{ev}(\beta,g)\circ_{\mathsf{ev(L,x)}} \mathsf{ev}(\alpha,f).
    \end{equation}
    As above, we set $a:=\otimes a_i$, $c:=\otimes c_k$, $b:= \parr b_j$ and $d:=\parr d_\ell$. Moreover, throughout this proof we abreviate $X\otimes Y$ as $XY$, and remove parenthesis using the convention that $XY\parr Z$ should be interpreted as $(X\otimes Y)\parr Z$. Using these notations, the dioperadic composition $g\circ_x f$ can be written as
    \begin{equation*}
        g\circ_x f\colon ac \xrightarrow{f\cdot \id}
        (b\parr x)c \xrightarrow{\delta}
        b\parr xc \xrightarrow{\id\parr g}
        b\parr d.
    \end{equation*}
    Using the naturality property of $\beta\circ_L\alpha$ (Definition \ref{df:operations_FunCD}) and unravelling the definition of composition \eqref{eqn:def_composition}, the left hand side of equation \eqref{eqn:equality_ev} is given by
    \begin{align}
        \label{eqn:computation_ev_1}
        \mathsf{ev}(\beta\circ_L \alpha, g\circ_x f) 
        \nonumber
        & = (\beta\circ_L\alpha)_{ac\xrightarrow{f\cdot \id}(b\parr x)c\xrightarrow{(\id\parr g)\circ \delta} b\parr d}\\
        & = \beta_{cx''\xrightarrow{\ev\circ \delta}(b\parr x)cb^*\xrightarrow{((\id\parr g)\id\circ\delta)^\flat}d} \circ_{Lx''} L\delta \circ_{Lx'} \alpha_{a \xrightarrow{(f\cdot \id)^\sharp} c^*\parr (b\parr x)c\xrightarrow{\delta\circ \coev}x'\parr b}.
    \end{align}
    where $x'=(c^*\parr (b\parr x)c)b^*$ and $x''=c^*\parr (b\parr x)cb^*$. The core of the proof consists in a suitable rewriting of the components of $\beta$ and $\alpha$, using naturality in the sense of Definition \ref{eqn:def_composition} and property \eqref{eqn:rewriting_lim}.

    We start with the term $\alpha$. Observe that there is a commutative diagram
    \begin{equation*}
        \begin{tikzcd}[column sep = 8 em]
    & a & {c^*\parr ac} \\
    & {b\parr x} & {c^*\parr (b\parr x)c} \\
            {x\parr b} & {(b\parr x)b^*\parr b} & {x'\parr b}
            \arrow["{\delta\circ(\coev\cdot \id)}", from=1-2, to=1-3]
            \arrow["f"', from=1-2, to=2-2]
            \arrow["{(f\cdot \id)^\sharp}"description, from=1-2, to=2-3]
            \arrow["{\id\parr f}", from=1-3, to=2-3]
            \arrow["{\delta\circ(\coev\cdot \id)}"', from=2-2, to=2-3]
            \arrow[equals, bend right = 10, from=2-2, to=3-1]
            \arrow["{\delta\circ\coev_b}"', from=2-2, to=3-2]
            \arrow["{\delta\circ\coev_b}"', from=2-3, to=3-3]
            \arrow["{(\ev_b\circ\delta)\parr \id}"{description}, from=3-2, to=3-1]
            \arrow["{\delta\circ(\coev\cdot \id)\parr \id}"{description}, from=3-2, to=3-3]
        \end{tikzcd}
    \end{equation*}
    Commutativity of the right part of this diagram allows to write the component of $\alpha$ appearing in \eqref{eqn:computation_ev_1} as
    \begin{align}
        \label{eqn:ev_1}
        \alpha_{a \xrightarrow{(f\cdot \id)^\sharp} c^*\parr (b\parr x)c\xrightarrow{\delta\circ \coev}x'\parr b}
        \nonumber
        &= \alpha_{a \xrightarrow{f} b\parr x \to x'\parr b} \\ 
        &= L(\delta\circ\coev_c)\circ_{L((b\parr x)b*)}
        \alpha_{a \xrightarrow{f} b\parr x\to (b\parr x)b^*\parr b}
    \end{align}
    where the implicit maps are those in the diagram, 
    while commutativity of the left part yields
    \begin{align}
        \label{eqn:ev_2}
        \alpha_{a\xrightarrow{f}b\parr x = b\parr x}
        = L(\ev\circ \delta)\circ_{L((b\parr x)b^*)} 
        \alpha_{a \xrightarrow{f} b\parr x \to (b\parr x)b^*\parr b}.
    \end{align}

    We proceed similarly for the components of $\beta$ using the following commutative diagram:
    \begin{equation*}
        \begin{tikzcd}[column sep = 8em]
            xc & {(c^*\parr xc)c} & {cx''} \\
               & xc & {(b\parr x)cb^*} \\
               & d & {(b\parr d)b^*}
               \arrow["{(\delta\circ\coev)\id}", from=1-1, to=1-2]
               \arrow[equals, bend right = 10, from=1-1, to=2-2]
               \arrow["{\ev_c\circ\delta}", from=1-2, to=2-2]
               \arrow["{\delta\circ\ev_b}"', from=1-3, to=1-2]
               \arrow["{\ev_c\circ\delta}", from=1-3, to=2-3]
               \arrow["g"' right, from=2-2, to=3-2]
               \arrow["{\delta\circ\ev_b}"', from=2-3, to=2-2]
               \arrow["{((\id\parr g)\id\circ\delta)^\flat}"{description}, from=2-3, to=3-2]
               \arrow["{(\id\parr g)\id\circ \delta}", from=2-3, to=3-3]
               \arrow["{\delta\circ\ev_b}", from=3-3, to=3-2]
        \end{tikzcd}
    \end{equation*}
    which gives the identities
    \begin{align}
        \label{eqn:ev_3}
        \nonumber
        \beta_{cx''\xrightarrow{\ev\circ\delta}(b\parr x)cb^*\xrightarrow{((\id\parr g)\id\circ\delta)^\flat} d}
        &= \beta_{cx''\to xc\to d}\\
        &= \beta_{(c^*\parr xc)c\to xc\to d} \circ_{L(c^*\parr xc)} L(\ev_b\circ \delta)
    \end{align}
    together with 
    \begin{equation}
        \label{eqn:ev_4}
        \beta_{xc=xc \xrightarrow{g}d} = 
        \beta_{(c^*\parr xc)c \to xc\to d} \circ_{L(c^*\parr xc)} L(\delta\circ\coev_c).
    \end{equation}
    The last ingredient is commutativity of the following diagram
    \begin{equation}
        \label{eqn:lm_ev_last_diag}
        \begin{tikzcd}[row sep = scriptsize]
            {(b\parr x)b^*} & {x'} \\
                            && {x''} \\
            x && {c^*\parr xc}
            \arrow["{\delta\circ\coev_c}"', from=1-1, to=1-2]
            \arrow["{\ev_b\circ\delta}", from=1-1, to=3-1]
            \arrow["\delta"', from=1-2, to=2-3]
            \arrow["{\ev_b\circ\delta}"', from=2-3, to=3-3]
            \arrow["{\delta\circ\coev_c}", from=3-1, to=3-3]
        \end{tikzcd}
    \end{equation}
    Putting all the above pieces together we obtain the desired result:
    \begin{align*}
        \mathsf{ev}(\beta\circ_L \alpha, g\circ_x f) 
        & \stackrel{\eqref{eqn:computation_ev_1}}{=} \beta_{cx''\xrightarrow{\ev\circ \delta}(b\parr x)cb^*\xrightarrow{((\id\parr g)\id\circ\delta)^\flat}d} \circ_{Lx''} L\delta \circ_{Lx'} \alpha_{a \xrightarrow{(f\cdot \id)^\sharp} c^*\parr (b\parr x)c\xrightarrow{\delta\circ \coev}x'\parr b}\\
        &\stackrel{\eqref{eqn:ev_1}, \eqref{eqn:ev_3}}{=} \beta_{(c^*\parr xc)c\to xc\to d} \circ
        L(\ev_b\circ \delta)
        \circ L\delta\circ
        L(\delta\circ\coev_c)\circ
        \alpha_{a \xrightarrow{f} b\parr x\to (b\parr x)b^*\parr b}\\
        &\stackrel{\eqref{eqn:lm_ev_last_diag}}{=} 
        \beta_{(c^*\parr xc)c \to xc\to d} \circ
        L(\delta\circ\coev_c) \circ L(\ev\circ \delta)\circ
        \alpha_{a \to b\parr x \to (b\parr x)b^*\parr b} \\
        &\stackrel{\eqref{eqn:ev_2}, \eqref{eqn:ev_4}}{=}
        \beta_{xc=xc \xrightarrow{g}d} \circ 
        \alpha_{a\xrightarrow{f}b\parr x = b\parr x}\\
        &= \mathsf{ev}(\beta,g)\circ_{\mathsf{ev(L,x)}} \mathsf{ev}(\alpha,f).
    \end{align*}
\end{proof}

\paragraph{Proof of the universal property. } 
We turn to establishing the universal property of the dioperad $\Fun(\zC,\zD)$. Let $\zO$ be a dioperad. 
We consider the functor
\begin{equation*}
    \Phi \colon \Diop(\zO,\Fun(\zC,\zD)) \longrightarrow
    \Diop(\zO\times \overline{\zU}\zC, \zD).
\end{equation*}
given by $\Phi(-) = \ev \circ (- \times \id_{\baru\zC})$.

We now construct an quasi-inverse $\Psi$ to $\Phi$. Given a map of dioperads $\varphi \colon \zO\times \overline{\zU}\zC\to \zD$, we define $\rho:= \Psi(\varphi)\colon \zO \to \Fun(\zC,\zD)$ as follows. For every color $p\in \zO$, we set $\rho(p)$ to be the functor $\varphi(p,-)$. For a morphism $\alpha \colon (p_i)\to (q_j)$ in $\zO$, we define $\rho(\alpha)$ componentwise via the formula 
\[
    \rho(\alpha)_{f,g} := \varphi(\alpha, g\circ f),
\]
where $f\colon \otimes a_i \to x$ and $g \colon x\to \parr b_j$ are two morphisms in $\zC$. It is straightforward to see that $\rho(\alpha)$ is indeed an element of $\Fun(\zC,\zD)((\rho(p_i)); (\rho(q_j)))$. The fact that the assignment $\rho$ is indeed a morphism of dioperads $\zO\to \Fun(\zC,\zD)$ follows from arguments similar to those in the proof of \ref{lm:ev_is_map_diop}, which we omit.

We now extend the assignment $\Psi$ to a functor $\Diop(\zO,\Fun(\zC,\zD))\to \Diop(\zO\times \overline{\zU}\zC,\zD)$. Given a polynatural transformation $t\colon \varphi \Rightarrow \varphi'$ of morphisms of dioperads $\zO\times \overline{\zU}\zC\to \zD$, we define $\Psi(t)\colon \Psi(\varphi)\Rightarrow\Psi(\varphi')$ via the formula 
$\Psi(t)_p := (t_{p,x})_{x\in \zC}$
for every $p\in \zO$. The fact that $\Psi(t)$ is a polynatural transformation follows easily from compatibility between the dioperadic composition of operations in $\Fun(\zC,\zD)$ and the usual composition of functors. With these definitions, it is immediate to check that $\Psi$ is a functor.

The fact that $\Phi$ and $\Psi$ are quasi-inverse is then a simple verification. This concludes the proof of Theorem \ref{thm:day}.

\begin{rk}[Enriched version of Theorem \ref{thm:day}]
    \label{rk:day_enriched}
    The statement and proof of the convolution theorem generalize to the enriched case; the result then takes the following form. For any $*$-autonomous $\zV$-category $\zC$ and any $\zV$-dioperad $\zD$, there exists a $\zV$-dioperad $\Fun(\zC,\zD)$ with the following universal property: for every $\zV$-dioperad $\zO$, there is an equivalence of categories
    \begin{equation*}
        \label{eqn:enriched_day_convolution}
        \Diop_\zV(\zO,\Fun(\zC,\zD)) \simeq 
        \Diop_\zV(\zO\otimes \overline{\zU}\zC, \zD),
    \end{equation*}
    where the symbol $\otimes$ in the right hand side denotes the objectwise and aritywise  tensor product of $\zV$-dioperads (generalizing the Hadamard tensor product in operadic literature).
\end{rk}

\subsection{Application to \texorpdfstring{$d$}{d}-duality contexts}
\label{sec:day_application}
To conclude, we sketch how Theorem \ref{thm:day} on convolution for dioperads provides a different approach to proving Theorem \ref{thm:main}.

As in Section \ref{par:setting}, we fix a $d$-duality context $F \colon \zD \rightleftharpoons\zC \colon G$  for some invertible object $d\in \zV$, and use the notations from this section.

Recall that there is a $\zV$-dioperad $\mathrm{Frob}$, the \emph{Frobenius dioperad}, which has a single color and objects of operations $\mathrm{Frob}(m,n)\cong \1_\zV$ for every $m,n\geq 0$. 
This dioperad is usually presented as generated by an associative commutative  binary operation $\nabla$ with a unit $\eta$ and a coassociative cocommutative operation $\Delta$ with a counit $\varepsilon$, that satisfy a compatibility relation analogous to the \eqref{eqn:Frobenius_relation}. It will be convenient to use the following alternative description of this dioperad. 
\begin{lm}
    \label{lm:presentation_frob_duality}
    A $\mathrm{Frob}$-algebra in a $\zV$-dioperad $\zO$ is equivalent to the data of a commutative algebra $(A, \nabla,\eta)$  together with an operation $\varepsilon$ in $\zO(A;\emptyset)$, such that $\chi := \varepsilon\circ \nabla$ is the evaluation of a duality. In other words, $\chi $ has the property that there exists some operation $\gamma$ in $\zO(\emptyset;A,A)$ such that the two composites of $\chi $ with $\gamma$ along $A$ are $\id_A$.
\end{lm}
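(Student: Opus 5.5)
We prove both directions by translating between the presentation of $\mathrm{Frob}$ by generators $(\nabla,\eta,\Delta,\varepsilon)$ and the "duality" presentation in the statement. A $\mathrm{Frob}$-algebra in $\zO$ is a morphism of dioperads $\mathrm{Frob}\to\zO$. Since every $\mathrm{Frob}(m,n)$ is $\1_\zV$, such a morphism amounts to the choice of one operation $\mu_{m,n}\in\zO(A^m;A^n)$ for each arity, invariant under permutations and closed under all simply connected graftings. Equivalently, it amounts to a commutative algebra $(A,\nabla,\eta)$ and a cocommutative coalgebra $(A,\Delta,\varepsilon)$ satisfying the Frobenius relation. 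The standard fact that connected, simply connected composites of Frobenius generators depend only on arity gives this equivalence. We take this classical presentation as given, as the paper does.

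\emph{From a $\mathrm{Frob}$-algebra to duality data.} Keep $(A,\nabla,\eta)$ and $\varepsilon$. Set $\chi=\varepsilon\circ_A\nabla\in\zO(A,A;\emptyset)$ and $\gamma=\Delta\circ_A\eta\in\zO(\emptyset;A,A)$. The composite of $\chi$ and $\gamma$ along one copy of $A$ is the image of a simply connected graph with one input and one output. Since this graph lands in $\mathrm{Frob}(1,1)=\1_\zV$, the composite equals the image of $\id$, i.e. $\id_A$. Concretely, one uses the Frobenius relation to rewrite $(\nabla\otimes\id)(\id\otimes\Delta)$ as $\Delta\circ\nabla$, then applies unit and counit. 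Both zig-zag identities follow the same way, by symmetry.

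\emph{From duality data to a $\mathrm{Frob}$-algebra.} Given $(A,\nabla,\eta,\varepsilon)$ and $\gamma$ as in the statement, define the coproduct as $\nabla$ transported along the duality:
\[
\Delta := (\id_A\otimes\nabla)\circ(\gamma\otimes \id_A),
\]
read dioperadically as $\nabla\circ_A\gamma$, grafting one output of $\gamma$ into one input of $\nabla$. This grafting is simply connected, so it is a legitimate composite in $\zO$. We verify the remaining axioms by dioperadic composition along single edges, using the zig-zag identities to cancel $\chi$ against $\gamma$.
\begin{itemize}
\item Counitality: $\varepsilon\circ_A\Delta=\chi\circ_A\gamma=\id_A$ after using the unit of $\nabla$, and similarly on the other side via commutativity.
\item Coassociativity and cocommutativity follow from associativity and commutativity of $\nabla$, together with invariance of $\chi$, i.e. $\chi(\nabla(a,b),c)=\chi(a,\nabla(b,c))$. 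Invariance holds because $\chi=\varepsilon\nabla$ and $\nabla$ is associative.
\item The Frobenius relation reduces directly to associativity of $\nabla$: both sides are $\nabla$-composites with a single $\gamma$ inserted.
\end{itemize}

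\emph{Mutual inverse.} The two constructions are mutually inverse. Starting from a Frobenius algebra, the snake identity recovers $\Delta$ from $\nabla\circ_A\gamma$ with $\gamma=\Delta\eta$; this is again a $\mathrm{Frob}(1,2)$ computation. Conversely, the $\gamma$ built as $\Delta\circ_A\eta$ equals the given $\gamma$, because duals are unique: two coevaluations for the same evaluation $\chi$ coincide by the usual zig-zag argument. Morphisms match as well, since a polynatural transformation compatible with $\nabla,\eta,\varepsilon$ is automatically compatible with $\gamma$, hence with $\Delta$.

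The main obstacle is bookkeeping in the second direction. Every verification must use only single-edge graftings, since dioperads lack the non-simply-connected compositions, such as $\Delta\circ\nabla$ traced through a loop. We would carefully draw each identity as a tree and check that each rewriting step is an allowed dioperadic composition.
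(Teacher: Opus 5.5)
Your proof of the correspondence of data is correct, but it takes a genuinely different route from the paper. The paper gives no argument: it cites Street's Theorem~1.6, the equivalent characterizations of Frobenius monoids phrased via Frobenius monads. You instead work intrinsically in $\zO$. The forward direction uses that any tree-shaped composite of arity $(1,1)$ is $\id_A$ because $\mathrm{Frob}(1,1)=\1_\zV$. The backward direction sets $\Delta=\nabla\circ_A\gamma$ and checks the generators-and-relations presentation of $\mathrm{Frob}$.

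This buys exactly what the citation glosses over. Street's setting is a monoidal category, where arbitrary string diagrams compose, whereas in a dioperad every intermediate expression must be connected and simply connected. Each of your rewritings replaces a connected simply connected subgraph by another with the same boundary, which preserves simple connectivity, so the argument is legitimate inside $\zO$. The alternative would be to pass to the envelope $\zE\zO$ and check that $\zO\to\zU\zE\zO$ detects equalities of operations. The citation buys brevity.

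Your sketch does undersell some steps. Write $\gamma=\sum_i e_i\otimes e^i$. Coassociativity, cocommutativity and the paper's form of the Frobenius relation, $\Delta(ab)=\sum a\, b_{(1)}\otimes b_{(2)}$, all rest on the identity $\sum_i a e_i\otimes e^i=\sum_i e_i\otimes e^i a$. That identity uses invariance of $\chi$ together with both zig-zags. Cocommutativity also needs $\gamma$ to be symmetric, which follows from symmetry of $\chi$ and uniqueness of coevaluations. Only the form $\Delta(ab)=\sum a_{(1)}\otimes a_{(2)}b$ is immediate from associativity. All of these derivations are tree-shaped, so they go through.

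One claim in your last paragraph is false, although the statement does not need it. A unary operation $\theta\colon A\to B$ compatible with $\nabla,\eta,\varepsilon$ need \emph{not} be compatible with $\gamma$, hence not with $\Delta$. For instance, over a field $k$ of characteristic $\neq 2$, take:
\begin{itemize}
\item $A=k$ with $\varepsilon_A(t)=2t$;
\item $B=k\times k$ with $\varepsilon_B(x,y)=x+y$;
\item $\theta(t)=(t,t)$.
\end{itemize}
Then $\theta$ is a unital algebra map with $\varepsilon_B\theta=\varepsilon_A$. However, $(\theta\otimes\theta)\gamma_A=\tfrac12(1,1)\otimes(1,1)\neq(1,0)\otimes(1,0)+(0,1)\otimes(0,1)=\gamma_B$. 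This is consistent with the fact that every morphism of Frobenius algebras is invertible. Your reasoning is valid only for invertible $\theta$: then $(\theta\otimes\theta)\gamma_A$ is a coevaluation for $\chi_B$, and uniqueness of coevaluations applies. The lemma, and its use in the application to duality contexts, concern only the structure on a fixed color. So either drop that sentence, or define morphisms on the right-hand side to be those that also preserve $\gamma$.
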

\begin{proof}
    This is a well-known result, see for example \cite[Theorem 1.6]{Street_Frobenius} (which uses the language of Frobenius monads).
\end{proof}

Note that the rigid symmetric monoidal $\zV$-category $\zC$ is in particular $*$-autonomous, with the property that the two underlying dioperads $\overline{\zU}\zC$ and $\zU\zC$ are isomorphic. By Theorem \ref{thm:day} (or rather its version for enriched categories, see Remark \ref{rk:day_enriched}) applied to the terminal $\zV$-dioperad $\mathrm{Frob}$, we have an equivalence of categories
\begin{equation}
    \label{eqn:convolution_applied_Frob}
    \Diop_\zV(\zU\zC,\zU\zD\{d\}) \cong \Alg_\mathrm{Frob}\left( \Fun(\zC,\zU\zD\{d\}) \right).
\end{equation}
Therefore, extending the functor $G\colon \zC\to \zD$ to a map of dioperads $\zU\zC\to \zU\zD\{d\}$ amounts to promoting it to a Frobenius algebra in the convolution dioperad $\Fun(\zC,\zU\zD\{d\})$. Since $G$ already admits a lax symmetric monoidal structure $(\nabla, \eta)$, by Lemma \ref{lm:presentation_frob_duality} it suffices to provide an operation $\varepsilon$ with the desired property. 
To describe it explicitly, we see by formula \eqref{eqn:def_equalizer} that it suffices to provide, for any composable morphisms $f\colon a\to x$ and $g\colon x\to \1_\zC$ in $\zC$, a morphism $G(a) \to \1_\zD$ of degree $-d$ in $\zD$, with appropriate naturality properties. To do so, we set $\varepsilon_{f,g}$ to be the composite $\xi\circ G(g\circ f)$, where $\xi$ is the given $d$-orientation. 

To show that $(G,\nabla,\eta,\varepsilon)$ actually yields a Frobenius algebra, by Lemma \ref{lm:presentation_frob_duality} it suffices to exhibit a suitable operation $\gamma$ in $\Fun(\zC,\zU\zD\{d\})(G,G;\emptyset)$. 
Recall that since  we are given a $d$-orientation $\xi$, the morphism $\beta$ of Definition \ref{df:orientation} is non-degenerate; we let   $\kappa_X\colon \1_\zD\to G(X)\otimes G(X^\vee)$ denote the associated coevaluation morphism of degree $d$.
Now given morphisms $f\colon \1_\zC\to x$ and $g\colon x\to b_1\otimes b_2$, we set $\gamma_{f,g}$ to be the composite
\[\begin{tikzcd}
	\1 & {G(\1)} & {G(b_1 b_2)} & {G(b_1)G(b_1^\vee)G(b_1b_2)G(b_2) G(b_2^\vee)} \\
	{G(b_1)G(b_2)} && {G(b_1)G(b_2)G(b_2^\vee b_1^\vee) G(b_1 b_2)} & {G(b_1)G(b_2)G(b_2^\vee) G(b_1^\vee) G(b_1 b_2)}
	\arrow["{\nabla_0}", from=1-1, to=1-2]
	\arrow["{G(g\circ f)}", from=1-2, to=1-3]
	\arrow["{\kappa_{b_1}\cdot \id\cdot \kappa_{b_2}}", from=1-3, to=1-4]
	\arrow["\cong"{marking, allow upside down}, draw=none, from=1-4, to=2-4]
	\arrow["{\id \cdot \beta_{b_1b_2}}"', from=2-3, to=2-1]
	\arrow["{\id\cdot \nabla_{b_2^\vee,b_1^\vee}\cdot \id}"', from=2-4, to=2-3]
\end{tikzcd}\]
where we omit the $\otimes$ symbols for convenience.
With these definitions, using techniques similar to those of Theorem \ref{thm:main}, one can verify that $\chi:=\varepsilon\circ \nabla$ and $\gamma$ form a duality.

The following result summarizes the above construction.
\begin{prop}
    \label{prop:orientation_from_convolution}
    Any $d$-duality context yields a Frobenius algebra structure in the convolution dioperad $\Fun(\zC,\zU\zD\{d\})$ on the right adjoint $G$.
\end{prop}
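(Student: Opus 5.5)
The plan is to verify the criterion of Lemma \ref{lm:presentation_frob_duality} in the convolution $\zV$-dioperad $\Fun(\zC,\zU\zD\{d\})$, using the data $(G,\nabla,\eta,\varepsilon,\gamma)$ just constructed. The first step is to check that each piece of data is a legitimate operation, i.e.\ a natural operation in the sense of Definition \ref{df:operations_FunCD}. For $\nabla$ and $\eta$, the components are $G(g\circ f)\circ\nabla_{a_1,a_2}$ (respectively $G(g\circ f)\circ\nabla_0$). Naturality in the source/target variables then follows from naturality of $\nabla$ and functoriality of $G$. That $(G,\nabla,\eta)$ is a commutative algebra is the lax symmetric monoidal structure, transported through the composition formula \eqref{eqn:def_composition}; in the rigid case, $\delta$, $\coev$, $\ev$ reduce to the usual duality data, so this is a direct unravelling. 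For $\varepsilon_{f,g}=\xi\circ G(g\circ f)$ and for $\gamma_{f,g}$, naturality in $x$ is automatic, since they depend only on $g\circ f$. Naturality in the outer variables $b_1,b_2$ for $\gamma$ uses that $\kappa_X$ is dinatural in $X$. This dinaturality holds because $\beta$ is natural in $X$ (by naturality of $\nabla$ and $\ev$), and the coevaluation of a natural non-degenerate pairing is dinatural.

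The second step is to compute $\chi=\varepsilon\circ\nabla$. By \eqref{eqn:def_composition}, its component at $a_1\otimes a_2\to x\to \1$ is $\xi\circ G(h)\circ\nabla_{a_1,a_2}$ with $h$ the composite. In particular, evaluating at $a_2=a_1^\vee$, $h=\ev$, recovers the pairing $\beta$ of Definition \ref{df:orientation}. The degrees also match: $\xi$ has degree $-d$, consistent with $\varepsilon\in\zU\zD\{d\}(G a;\emptyset)$, which carries twist $d^{\otimes(0-1)}$. Similarly, $\gamma$ has degree $d$, matching two outputs.

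The third step, which I expect to be the main obstacle, is the pair of zigzag identities $(\chi\otimes\id)\circ(\id\otimes\gamma)=\id_G$ and its mirror, computed in $\Fun(\zC,\zU\zD\{d\})$. Unravelling the composition formula, a component of the left side at $a\to x\to b$ is a string diagram in $\zD$. In it, $\gamma$ at the object $b_1\otimes b_2$ (with $b_2$ matched against $a$ by duality) is glued to $\chi$ through the twisting morphisms $L(\delta)$, which in the rigid case are associators and (co)evaluations of $\zC$ pushed through $G$. I would then mimic the graphical proof of Theorem \ref{thm:main}. First, use \eqref{tag:star} and \eqref{tag:ass} to move all $G$-applied evaluations past $\nabla$, so that the internal composite $\xi\circ G(\ev)\circ\nabla$ appears, i.e.\ a copy of $\beta$. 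Then use the defining property of $\kappa$ as the coevaluation dual to $\beta$, in the form of \eqref{tag:dual} applied to the pair $(\beta,\kappa)$, to cancel one $\kappa$ against one $\beta$. The remaining $\kappa_{b_1}$ and $\beta_{b_1b_2}$ similarly collapse after one more application of associativity and naturality, leaving $G(g\circ f)$, i.e.\ the identity operation component. The delicate bookkeeping is matching the two copies of $\kappa$ in $\gamma$ with the correct pairings and checking that the $\delta$-twists in \eqref{eqn:def_composition} contribute only snake identities of $\zC$. To control this, I would reduce via naturality \eqref{eqn:rewriting_lim} to the universal components (with $x$ equal to the source or target tensor product), as in the proof of Lemma \ref{lm:ev_is_map_diop}. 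The mirror identity follows by symmetry of $\gamma$ and $\chi$. Lemma \ref{lm:presentation_frob_duality} then yields the Frobenius algebra structure on $G$.
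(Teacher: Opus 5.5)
Your proposal is correct and takes the same route as the paper. You apply Lemma \ref{lm:presentation_frob_duality} to the lax structure $(\nabla,\eta)$, $\varepsilon_{f,g}=\xi\circ G(g\circ f)$ and the explicit $\gamma$, and check the duality between $\chi=\varepsilon\circ\nabla$ and $\gamma$ with the graphical techniques of Theorem \ref{thm:main}. You also fill in details the paper leaves implicit: the naturality checks (including dinaturality of $\kappa$), the identification of $\chi$ with $\beta$ on the components with $h=\ev$, the degree count, and the reduction to universal components.

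The one point you assert without proof, symmetry of $\gamma$, is not actually needed. Symmetry of $\chi$ alone turns the first zigzag for $\gamma$ into the second zigzag for $\sigma\gamma$, and the usual uniqueness-of-copairing argument then gives $\gamma=\sigma\gamma$.
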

Moreover, it is easy to see that under the equivalences \eqref{eqn:convolution_applied_Frob} and Proposition \ref{prop:twisted_Frob_monoidal}, this structure corresponds to the $d$-twisted Frobenius monoidal structure of Theorem \ref{thm:main}.


\bibliographystyle{alpha}
\bibliography{bibliography}

\end{document}